\documentclass[12pt]{amsart}

\usepackage[utf8]{inputenc}
\usepackage{amsfonts}
\usepackage{amsthm}
\usepackage{amssymb}
\usepackage{amsmath}
\usepackage{amscd}
\usepackage{latexsym,dsfont}
\usepackage{bbm}
\usepackage{mathrsfs}

\usepackage{times}
\usepackage{microtype}
\usepackage{setspace}
\usepackage[margin=1.2in]{geometry}

\usepackage{cite}

\usepackage[colorlinks=true, pdfstartview=FitV, linkcolor=blue,
citecolor=blue, urlcolor=blue]{hyperref}

\usepackage{mathtools}

\newtheorem{theorem}{Theorem}[section]
\newtheorem{lemma}[theorem]{Lemma}
\newtheorem{prop}[theorem]{Proposition}
\newtheorem{cor}[theorem]{Corollary}

\newtheorem{defn}[theorem]{Definition}
\newtheorem{example}[theorem]{Example}

\theoremstyle{definition}

\theoremstyle{remark}
\newtheorem{remark}[theorem]{Remark}

\numberwithin{equation}{section}
 \allowdisplaybreaks

\def\Xint#1{\mathchoice
   {\XXint\displaystyle\textstyle{#1}}%
   {\XXint\textstyle\scriptstyle{#1}}%
   {\XXint\scriptstyle\scriptscriptstyle{#1}}%
   {\XXint\scriptscriptstyle\scriptscriptstyle{#1}}%
   \!\int}
\def\XXint#1#2#3{{\setbox0=\hbox{$#1{#2#3}{\int}$}
     \vcenter{\hbox{$#2#3$}}\kern-.5\wd0}}

\def\avgint{\Xint-}

\makeatletter
\DeclareFontFamily{OMX}{MnSymbolE}{}
\DeclareSymbolFont{MnLargeSymbols}{OMX}{MnSymbolE}{m}{n}
\SetSymbolFont{MnLargeSymbols}{bold}{OMX}{MnSymbolE}{b}{n}
\DeclareFontShape{OMX}{MnSymbolE}{m}{n}{
    <-6>  MnSymbolE5
   <6-7>  MnSymbolE6
   <7-8>  MnSymbolE7
   <8-9>  MnSymbolE8
   <9-10> MnSymbolE9
  <10-12> MnSymbolE10
  <12->   MnSymbolE12
}{}
\DeclareFontShape{OMX}{MnSymbolE}{b}{n}{
    <-6>  MnSymbolE-Bold5
   <6-7>  MnSymbolE-Bold6
   <7-8>  MnSymbolE-Bold7
   <8-9>  MnSymbolE-Bold8
   <9-10> MnSymbolE-Bold9
  <10-12> MnSymbolE-Bold10
  <12->   MnSymbolE-Bold12
}{}

\let\llangle\@undefined
\let\rrangle\@undefined
\DeclareMathDelimiter{\llangle}{\mathopen}%
                     {MnLargeSymbols}{'164}{MnLargeSymbols}{'164}
\DeclareMathDelimiter{\rrangle}{\mathclose}%
                     {MnLargeSymbols}{'171}{MnLargeSymbols}{'171}
\makeatother

\DeclareMathOperator{\supp}{supp}

\DeclareMathOperator*{\esssup}{ess\,sup}

\DeclareMathOperator{\grad}{\nabla}

\newcommand{\op}{{\mathrm{op}}}

\newcommand{\R}{\mathbb{R}}
\newcommand{\rn}{{\mathbb{R}^n}}

\newcommand{\Ss}{\mathcal S}

\newcommand{\bB}{\mathbf B}
\newcommand{\bBo}{\overline{\bB}}

\newcommand{\cB}{\mathcal B}

\newcommand{\vece}{\mathbf e}
\newcommand{\vecf}{\mathbf f}
\newcommand{\vecg}{\mathbf g}

\newcommand{\vecv}{\mathbf v}
\newcommand{\vecx}{\mathbf x}

\newcommand{\vecu}{\mathbf u}

\newcommand{\A}{\mathcal{A}}
\newcommand{\K}{\mathcal{K}}
\newcommand{\D}{\mathcal{D}}

\newcommand{\Md}{\mathcal{M}_d}
\newcommand{\Sd}{\mathcal{S}_d}
\newcommand{\W}{\mathcal W}

\newcommand{\avgL}{\textit{\L}} 

\title{On off-diagonal operators in matrix-weighted spaces}

\author{D. Cruz-Uribe OFS, A. Laukkarinen, and K. Moen}

\address{David Cruz-Uribe, OFS \\
Dept. of Mathematics \\
University of Alabama \\
 Tuscaloosa, AL 35487, USA}

\email{dcruzuribe@ua.edu}

\address{Aapo  Laukkarinen \\
Aalto University \\
Department of Mathematics and Systems Analysis \\
P.O. Box 11100 \\
FI-00076 Aalto, Finland}

\email{aapo.laukkarinen@aalto.fi}

\address{Kabe Moen \\
Dept. of Mathematics \\
University of Alabama \\
 Tuscaloosa, AL 35487, USA}

 \email{kabe.moen@ua.edu}

 \thanks{The first author is partially supported by a Simons Foundation
  Travel Support for Mathematicians Grant and by NSF Grant DMS-2349550. The second author was supported by the Research Council of Finland through grant 364208. The authors would like to thank Tuomas Hyt\"onen for his support which allowed the second author to visit the University of Alabama, where this project was begun.}

  \keywords{Riesz potentials, fractional integral operators, matrix weights, $A_p$ weights, sparse domination, convex body domination}
\subjclass{42B20, 42B25, 42B35}

\begin{document}

\begin{abstract}
    In this paper we prove matrix-weighted inequalities for fractional operators and their commutators.  We do so by developing the theory of convex body domination for such operators.  Using this approach we prove quantitative estimates for the fractional integral operator (or Riesz potential) and its commutators, and prove matrix-weighted Gagliardo-Nirenberg-Sobolev inequalities for vector-valued functions.
\end{abstract}
\maketitle

\section{Introduction}
\label{section:introduction}
 The goal of this paper is to prove off-diagonal,  matrix-weighted norm inequalities for fractional operators.  To put our results into context, we provide a brief history.  Many operators in harmonic analysis are off-diagonal in the sense that they map $L^p\to L^q$, $p\neq q$. One of the most fundamental examples is the fractional integral (or Riesz potential) $I_\alpha$, $0<\alpha<n$, defined by
\[
    I_\alpha f(x)\coloneqq\int_{\R^n}\frac{f(y)}{|x-y|^{n-\alpha}}\, dy.
\]
The boundedness of this operator dates back to the work of Hardy and Littlewood \cite{HL1928someproperties} and Sobolev \cite{Sob38}. Weighted estimates estimates for $I_\alpha$ were proved by Muckenhoupt and Wheeden \cite{muckenhoupt-wheeden74}. They showed that 
\begin{equation*}
    \|wI_\alpha f\|_{L^q(\R^n)}\leq C\|wf\|_{L^p(\R^n)}
\end{equation*}
if and only if $\frac1p-\frac1q=\frac{\alpha}{n}$ and
\[
    [w]_{A_{p,q}}\coloneqq
    \sup_Q \bigg(\avgint_Qw^q\,dx\bigg)^{\frac1q} \left(\avgint_Q w^{-p'}\,dx\right)^\frac{1}{p'}<\infty.
\]

Motivated by the regularity results of Astala, Ivaniec and Saksman \cite{MR1815249}, the question of the optimal dependence on the weight in weighted norm inequalities  has received considerable attention.  For the fractional integral this question becomes the problem of finding the dependence on the $A_{p,q}$ characteristic $[w]_{A_{p,q}}$. Lacey, P\'erez, Torres and the third author showed that the constant is of the form 
$C[w]_{A_{p,q}}^{(1-\frac{\alpha}{n})\max\{p',q\}}$ and this is sharp in the sense that no smaller power can be taken on the $A_{p,q}$ characteristic.

The technique referred to as  sparse domination, developed by Lerner \cite{Lern2012,lerner-IMRN2012} for Calder\'on-Zygmund operators, is often the cleanest way of proving these sharp weighted estimates. In the case of $I_\alpha$, this technique amounts to showing that $I_\alpha f$ is dominated pointwise by a finite collection of sparse dyadic fractional operators, 
\begin{equation}\label{Eq:SparseDominationforIalpha}
   I^\mathcal{S}_\alpha f(x)\coloneqq \sum_{Q\in\mathcal{S}}|Q|^\frac{\alpha}{n}\avgint_Qf(y)\,dy\,\mathbbm 1_Q(x),
\end{equation}
where $\mathcal{S}$ is a sparse collection of cubes in a dyadic grid $\mathcal{D}$. The first explicit proof of \eqref{Eq:SparseDominationforIalpha} was given by the first and third author in \cite{MR3224572}.  The proof for fractional integrals is much simpler than for singular integrals.  By a dyadic decomposition one can show that $I_\alpha$ is pointwise comparable to a finite linear combination of dyadic operators 
\[
    I_\alpha^\mathcal{D} f(x)\coloneqq \sum_{Q\in\mathcal{D}}|Q|^\frac{\alpha}{n}\avgint_Qf(y)\,dy\,\mathbbm 1_Q(x).
\]
Then, by summing Calder\'on-Zygmund cubes on many levels, one can show that the dyadic operator is bounded by a sparse operator, i.e., 
\[
    I_\alpha^\mathcal{D} f(x)\leq C I_\alpha^\mathcal{S} f(x).
\]

\medskip

Matrix-weighted inequalities are a natural extension of  scalar-weighted estimates, where the operator now acts on vector-valued functions componentwise. Such inequalities were first considered for the Hilbert transform by Nazarov, Treil, and Volberg~\cite{MR1428988,MR1428818,MR1478786,MR1423034} and then for general Calder\'on-Zygmund operators by Christ and Goldberg~\cite{MR1813604,MR2015733}.  Matrix-weighted inequalities for fractional integrals were first considered by Isralowitz and the third author~\cite{MR4030471}, who showed that 
\begin{equation}\label{Eq:IMmatrixweightedineq}
    \|WI_\alpha\vecf\|_{L^q(\R^n,\R^d)}\leq C [W]_{\mathcal{A}_{p,q}}^{(1-\frac{\alpha}{n})p'+q-1}\|W\vecf\|_{L^p(\R^n,\R^d)},
\end{equation}
where
\[
[W]_{\A_{p,q}} \coloneqq \sup_{Q} \bigg( \avgint_Q \left( \avgint_{Q} |W(x) W^{-1}(y)|^{p'}_{\op} \,dy \right)^{\frac{q}{p'}} \,dx \bigg)^{\frac{1}{q}}.
\]
Their proof essentially involved carrying out the sparse domination argument from \cite{MR3224572} discussed above with the matrix weight inserted into the terms, making it essentially a scalar argument.

Another approach to proving matrix-weighted inequalities involves a generalization of the sparse domination technique.  The idea is to replace the averages with convex sets that precisely capture the information on the size and direction of the vector-valued functions. This idea was introduced by Petermichl, Nazarov, Treil and Volberg~\cite{MR3689742}; they refer to this technique as convex body domination.  They showed that if $T$ is a Calder\'on-Zygmund operator, given a vector-valued function $\vecf\in L^\infty_c(\R^n,\R^d)$, there exists a sparse family (not necessarily dyadic) such that
\[ T\vecf(x) 
\in C \sum_{Q\in \Ss} \llangle \vecf\rrangle_{\avgL^1(Q)}\, \mathbbm 1_Q(x).\]
They used convex body domination to show the matrix-weighted inequality 
\begin{equation}\label{Eq:MatrixAtwoboundforCZO}
    \|WT\vecf\|_{L^2(\R^n,\R^d)}\leq C[W]_{\mathcal{A}_2}^3\|W\vecf\|_{L^2(\R^n,\R^d)}.
\end{equation}
This bound was recently  shown to be sharp when $T$ is the Hilbert transform by Domolevo, Petermichl, Treil, and Volberg \cite{DPTV-2024}. It is important to compare this to the sharp $\mathcal{A}_2$ bound in the scalar case for Calder\'on-Zygmund operators \cite{MR2912709}, where the exponent $3$ in \eqref{Eq:MatrixAtwoboundforCZO} is replaced by $2$.  Convex body domination was also used to prove quantitative bounds on $L^p$, $1<p<\infty$, by Isralowitz and the first and third authors~\cite{MR3803292}. Their result matches \eqref{Eq:IMmatrixweightedineq} when $\alpha=0$ and gives the sharp exponent $3$ when $p=2$.  It is not known if their result is the best possible for all $p$.

\medskip

\subsection*{Main results}
Our first main result  is an extension of convex body domination to off-diagonal operators. As a special case,  we show the following.

\begin{theorem} \label{thm:I-alpha-sparse-intro}
Given $0<\alpha<n$, there exists $0<\eta<1$ such that, for every $\vecf \in L^\infty_c(\rn,\R^d)$, there exists $\eta$-sparse families $\Ss_i$, $i=1,\dots,3^n$, in different dyadic lattices such that for almost every $x\in \rn$,
\begin{equation}\label{Eq:CBDforfracint}
    I_\alpha \vecf(x) 
\in C \sum_{i=1}^{3^n}\sum_{Q\in \Ss_i} |Q|^{\frac{\alpha}{n}}\llangle \vecf\rrangle_{\avgL^1(Q)}\, \mathbbm 1_Q(x).
\end{equation}
\end{theorem}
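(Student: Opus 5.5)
We argue as in the scalar case, replacing averages by the convex bodies $\llangle \vecf\rrangle_{\avgL^1(Q)}$. Recall that $\llangle \vecf\rrangle_{\avgL^1(Q)}$ is the set $\{\avgint_Q \vecf\varphi\,dy : \varphi\in L^\infty(Q),\ \|\varphi\|_\infty\le 1\}$. It is a symmetric, convex, compact set, and it has two properties we will use. First, it is monotone under enlargement with the volume factor: if $P\subset Q$, then $\llangle \vecf\mathbbm 1_P\rrangle_{\avgL^1(Q)}\subset \frac{|P|}{|Q|}\llangle \vecf\rrangle_{\avgL^1(P)}$, and also $\frac{|P|}{|Q|}\llangle \vecf\rrangle_{\avgL^1(P)}\subset \llangle \vecf\rrangle_{\avgL^1(Q)}$. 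Second, sums of such sets over disjoint pieces combine: $\sum_j \frac{|P_j|}{|Q|}\llangle \vecf\rrangle_{\avgL^1(P_j)}\subset \llangle \vecf\rrangle_{\avgL^1(Q)}$ for disjoint $P_j\subset Q$. This follows by concatenating the test functions $\varphi_j$.

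\textbf{Step 1: reduction to dyadic operators.} By the three-lattice theorem there are dyadic lattices $\D_i$, $i=1,\dots,3^n$, such that every ball $B(x,r)$ is contained in some $Q\in\D_i$ with $\ell(Q)\le Cr$. Decompose $|x-y|^{\alpha-n}$ into annuli $2^{k-1}<|x-y|\le 2^k$. Then
\[
I_\alpha\vecf(x)=\sum_k\int_{2^{k-1}<|x-y|\le 2^k}\frac{\vecf(y)}{|x-y|^{n-\alpha}}\,dy .
\]
The $k$-th term equals $2^{k(\alpha-n)}\int_{B(x,2^k)}\vecf\,\varphi_k\,dy$, where $\varphi_k=\mathbbm 1_{\mathrm{annulus}}\,(2^k/|x-y|)^{n-\alpha}$ is bounded by $2^{n-\alpha}$. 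For each $k$ choose the cube $Q\ni x$ containing $B(x,2^k)$ with $\ell(Q)\approx 2^k$ in one of the lattices. Restricting $\varphi_k$ to $Q$, this term lies in $C|Q|^{\alpha/n}\llangle \vecf\rrangle_{\avgL^1(Q)}$.

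Summing a series of elements of the sets $c_k K_k$ gives an element of the Minkowski sum $\sum_k c_kK_k$ (interpreted as a closed limit; this is fine since $\vecf\in L^\infty_c$, so the tail is absolutely convergent). Two different scales $k$ give different cubes, and split over the $3^n$ lattices they give distinct cubes of $\D_i$ containing $x$. Enlarging the sum to all cubes (each body contains $0$), we obtain
\[
I_\alpha\vecf(x)\in C\sum_i I^{\D_i}_\alpha\vecf(x),\qquad I^{\D}_\alpha\vecf(x)\coloneqq\sum_{Q\in\D}|Q|^{\alpha/n}\llangle \vecf\rrangle_{\avgL^1(Q)}\mathbbm 1_Q(x).
\]
Only cubes meeting the support matter. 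For large cubes the series still converges because of the factor $|Q|^{\alpha/n-1}$, but we need to handle them in the sparse step.

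\textbf{Step 2: sparsification of $I^\D_\alpha$.} Fix a lattice $\D$. Restrict first to cubes contained in a top cube $Q_0$ containing $\supp\vecf$; the cubes above it are handled by choosing a chain of ancestors, which is itself sparse.

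Form the Calderón--Zygmund stopping cubes on many levels using the scalar function $|\vecf|$. Set $\Ss_0=\{Q_0\}$, and let the children of $P\in\Ss$ be the maximal $Q\subset P$ with $\avgint_Q|\vecf|>2^{n+1}\avgint_P|\vecf|$. This family is $\tfrac12$-sparse.

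Each $Q\in\D$ has a stopping parent $\pi(Q)=P$. Then we need
\[
\sum_{Q:\pi(Q)=P}|Q|^{\alpha/n}\llangle \vecf\rrangle_{\avgL^1(Q)}\mathbbm 1_Q(x)\subset C|P|^{\alpha/n}\llangle \vecf\rrangle_{\avgL^1(P)}\ \text{ plus terms absorbed at lower levels.}
\]

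\textbf{The main obstacle} is this convex-body inclusion. In the scalar case one simply bounds each average by $2^{n+1}\langle|f|\rangle_P$, but a bound on $|\vecf|$ does not give containment of convex bodies. The plan is to split $\vecf=\vecf\mathbbm 1_{P\setminus E}+\sum_j\vecf\mathbbm 1_{P_j}$, where the $P_j$ are the stopping children and $E=\bigcup_j P_j$.

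For the part on $P\setminus E$, we want the bodies indexed by $Q\ni x$, $\pi(Q)=P$, to sum (with weights $|Q|^{\alpha/n}$, a geometric series in the side length) into $C|P|^{\alpha/n}\llangle\vecf\rrangle_{\avgL^1(P)}$. This does not follow from size alone. Instead I would use the John ellipsoid reduction: there is a matrix $A_P$ with $\llangle \vecf\rrangle_{\avgL^1(P)}\approx A_P\overline{\mathbf B}$ up to a factor $\sqrt d$. We then run the stopping condition on $|A_P^{-1}\vecf|$ rather than $|\vecf|$, which is the NPTV trick. Inside $P$ before stopping, $\avgint_Q|A_P^{-1}\vecf|\le C$, so
\[
\llangle\vecf\rrangle_{\avgL^1(Q)}\subset A_P\,\llangle A_P^{-1}\vecf\rrangle_{\avgL^1(Q)}\subset C\,A_P\overline{\mathbf B}\subset C\sqrt d\,\llangle\vecf\rrangle_{\avgL^1(P)} .
\]
Summing $|Q|^{\alpha/n}$ over the nested cubes $Q\ni x$ between $P$ and the stopping level gives $C|P|^{\alpha/n}$. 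Here $\alpha>0$ is used, and this is why $\eta$ depends on $\alpha$ only through the constants.

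The contributions from the $P_j$ pieces sit inside the cubes $Q$ strictly containing $P_j$. By the combination property they are dominated by the same body, since $\llangle\vecf\mathbbm 1_{P_j}\rrangle_{\avgL^1(Q)}\subset\llangle\vecf\rrangle_{\avgL^1(Q)}$ is already covered by the estimate above. So in fact no splitting is needed once we work with $A_P^{-1}\vecf$.

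Summing over $P\in\Ss$ gives the claim. Sparseness with $\eta=1-2^{-1}$-type constants follows from the weak $(1,1)$ bound for $|A_P^{-1}\vecf|$ on $P$.
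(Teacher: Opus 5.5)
Your argument is correct, and it takes a genuinely different route from the paper's. The paper never discretizes $I_\alpha$. It follows Lerner's local approach and uses only that $I_\alpha$ and the truncated maximal operator $M_{I_\alpha}$ map $L^1\to L^{\frac{n}{n-\alpha},\infty}$ (the latter because $M_{I_\alpha}f\lesssim I_\alpha(|f|)$). On each top cube $Q_0$ it does the following:
\begin{enumerate}
\item it builds scalar exceptional families (Proposition~\ref{prop:scalarbounds}) for the coordinates $\vecf\cdot\vecv_i$, taken along the principal axes of the John ellipsoid of $\llangle\vecf\rrangle_{\avgL^1(3Q_0)}$;
\item it merges these families (Proposition~\ref{prop:pointwisebootstrap});
\item it converts the coordinatewise bounds into a body inclusion via Lemma~\ref{lem:convexbodytest}.
\end{enumerate}
It then iterates and finishes with the three lattice trick.

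You instead prove $I_\alpha\vecf(x)\in C\sum_i I^{\D_i}_\alpha\vecf(x)$ directly, by testing against the bounded functions $\varphi_k$ on annuli. You then sparsify with principal cubes normalized by the John matrix $A_P$ of each stopping cube. The paper's introduction says its authors reached your Step~1 but could not pass from the dyadic convex-body operator to a sparse one, and leaves this open. Your Step~2 does exactly that, and its ingredients check out:
\begin{itemize}
\item $\avgint_P|A_P^{-1}\vecf|\le d^{3/2}$, which follows from $\llangle\vecf\rrangle_{\avgL^1(P)}\subset\sqrt d\,A_P\bBo$, gives sparseness with a fixed threshold such as $\Lambda=2d^{3/2}$.
\item Every $Q\subset P$ not contained in a stopping child satisfies $\llangle\vecf\rrangle_{\avgL^1(Q)}=A_P\llangle A_P^{-1}\vecf\rrangle_{\avgL^1(Q)}\subset\Lambda A_P\bBo\subset\Lambda\llangle\vecf\rrangle_{\avgL^1(P)}$.
\item $\sum_{Q\ni x,\,\pi(Q)=P}|Q|^{\alpha/n}\le(1-2^{-\alpha})^{-1}|P|^{\alpha/n}$.
\end{itemize}

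Your route gives an elementary, self-contained proof with $\eta=\frac12$ independent of $\alpha$. Contrary to your closing remark, only $C$ depends on $\alpha$. Since Step~1 uses only the kernel size bound, it also gives Corollary~\ref{Cor:MTalpha-bd} verbatim. What it gives up is generality. It needs $\alpha>0$ for the geometric sum, and it dominates the kernel in absolute value, so it cannot reach $\alpha=0$ or operators whose boundedness relies on cancellation. Theorem~\ref{thm:pointwiseCBDfromweaktypebounds}, by contrast, is a black box for any $T$ with $T,M_T\colon L^r\to L^{s,\infty}$. That covers Calder\'on--Zygmund operators and the H\"ormander-type operators of Corollary~\ref{Cor:MTalpha-hormander-bd}.

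When you write it up:
\begin{itemize}
\item Discard the initial stopping rule on $|\vecf|$ and the unneeded splitting.
\item State the threshold and the bound $\avgint_P|A_P^{-1}\vecf|\le d^{3/2}$ explicitly, since sparseness comes from these plus disjointness, not a separate weak-type bound.
\item Handle a degenerate body $\llangle\vecf\rrangle_{\avgL^1(P)}$ by working in its span, where $\vecf$ takes values a.e.\ on $P$.
\end{itemize}
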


In light of the proof strategy of \eqref{Eq:IMmatrixweightedineq} in \cite{MR4030471} it would be reasonable to guess that a generalization of the argument in \cite{MR3224572} would also work for the proof of \eqref{Eq:CBDforfracint}. An idea inspired by \cite{mb-2026} is to try to repeat the argument for convex-set valued functions, with inequalities replaced by inclusions. This approach initially works, and shows that $I_\alpha \vecf$ is contained in a dyadic operator analogous to the right-hand side of~\eqref{Eq:CBDforfracint}.  However, we were unable to complete this approach by directly estimating the dyadic operator with the sparse operator.  It is an open problem to show that the argument from the scalar case can be extended in this way.

Instead, we develop an approach inspired by the work of Lerner \cite{Ler2016OnPointwise}, where weak type inequalities are used to achieve sparse domination. We then generalize a bootstrapping argument due to Hyt\"onen to prove convex body domination for off-diagonal operators.  This approach is quite flexible and allows us to prove convex body domination for a number of classes of operators. In particular, our domination result applies to fractional singular integrals, and to fractional operators whose kernels satisfy H\"ormander-type conditions.  This operator class has been considered by a number of authors and it includes, for example, rough fractional integrals and fractional Schr\"odinger operators.

\medskip

We next consider three applications of off-diagonal convex body domination.    First, we use it to prove quantitative weighted norm inequalities for off-diagonal operators.  For instance, we use Theorem~\ref{thm:I-alpha-sparse-intro} to prove an improved matrix-weighted inequality for the fractional integral.

\begin{theorem} \label{thm:I-alpha-bound-intro}
Given $0<\alpha<n$ and $1<p<\frac{n}{\alpha}$, define $q$ by $\frac{1}{p}-\frac{1}{q}=\frac{\alpha}{n}$.  If $W\in \A_{p,q}$, then
for all $\vecf\in L^p(\rn,W)$,
\begin{equation} \label{eqn:mfi1-intro}
 \|WI_\alpha \vecf\|_{L^q(\R^n,\R^d)} 
\leq C[W]_{\A_{p,q}}^{\min\{(1-\frac{\alpha}{n})p'+q, p'+(1-\frac{\alpha}{n})q\}-1}\|W\vecf\|_{L^p(\R^n,\R^d)}. 
\end{equation}
\end{theorem}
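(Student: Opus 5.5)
We reduce Theorem~\ref{thm:I-alpha-bound-intro}, via the convex body domination of Theorem~\ref{thm:I-alpha-sparse-intro}, to a bound for a single sparse operator. By Theorem~\ref{thm:I-alpha-sparse-intro} and a density argument (first take $\vecf\in L^\infty_c$), it suffices to show that for a fixed $\eta$-sparse family $\Ss$ in a dyadic lattice, every measurable selection of
\[
\sum_{Q\in\Ss}|Q|^{\frac{\alpha}{n}}\llangle \vecf\rrangle_{\avgL^1(Q)}\mathbbm 1_Q(x)
\]
satisfies the $L^q(W)$ bound with the stated power. Each element of $\llangle \vecf\rrangle_{\avgL^1(Q)}$ has the form $\avgint_Q \varphi(y)\vecf(y)\,dy$ with $|\varphi|\le 1$. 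Hence
\[
|W(x)\,\vecv_Q| \le \avgint_Q |W(x)W^{-1}(y)|_{\op}\,|W(y)\vecf(y)|\,dy .
\]
We want to exploit reducing operators rather than this crude scalar bound, so as to beat the Isralowitz--Moen exponent. To that end, let $\mathcal W_Q$ and $\mathcal W'_Q$ be reducing operators with
\[
|\mathcal W_Q \vece|\approx \Big(\avgint_Q|W\vece|^q\Big)^{1/q}, \qquad |\mathcal W'_Q\vece|\approx\Big(\avgint_Q|W^{-1}\vece|^{p'}\Big)^{1/p'},
\]
so that $|\mathcal W_Q\mathcal W'_Q|_{\op}\lesssim[W]_{\A_{p,q}}$.

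We then argue by duality. Pair with $\vecg\in L^{q'}(\R^n,\R^d)$, $\|\vecg\|_{q'}=1$, to get
\begin{multline*}
\Big|\int \langle W\vecv_Q,\vecg\rangle \mathbbm 1_Q\Big| \le |Q|^{1+\frac{\alpha}{n}}\,|\mathcal W_Q\mathcal W'_Q|_{\op}\\
\times\avgint_Q |(\mathcal W'_Q)^{-1}W^{-1}(W\vecf)|\,\avgint_Q|\mathcal W_Q^{-1}W\vecg| .
\end{multline*}
This gives a bilinear scalar sparse form in the functions $x\mapsto|(\mathcal W'_Q)^{-1}W^{-1}(x)|_{\op}|W\vecf(x)|$ and the analogous expression for $\vecg$. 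The standard device is to control these by auxiliary maximal operators $M'_{W,p}h=\sup_Q \avgint_Q|(\mathcal W'_Q)^{-1}W^{-1}h|\mathbbm 1_Q$ and $M_{W,q'}$. Using sparseness (via $|Q|\le \eta^{-1}|E_Q|$) and the scalar off-diagonal sparse bound for the fractional maximal form, we obtain
\[
\|\cdot\| \lesssim [W]_{\A_{p,q}}\,\|M'\|_{L^p\to L^p}\,\|M\|_{L^{q'}\to L^{q'}} .
\]
More precisely, we use one fractional maximal operator of order $\alpha$ mapping $L^p\to L^q$ and a non-fractional one on $L^{q'}$, or the reverse assignment. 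The two choices produce the two exponents inside the minimum in \eqref{eqn:mfi1-intro}, and we take the better of the two.

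The main obstacle is obtaining the sharp operator norms of these matrix-weighted auxiliary maximal operators. Following the Goldberg / Isralowitz--Cruz-Uribe--Moen argument, we show that $|(\mathcal W'_Q)^{-1}W^{-1}(\cdot)|_{\op}^{p'}$ satisfies a reverse H\"older inequality. Its scalar $A_\infty$-type constant is controlled by a power of $[W]_{\A_{p,q}}$; this is where the $p'$ and $q$ powers, and the $(1-\frac{\alpha}{n})$ factor from the fractional maximal bound $\|M_\alpha\|\lesssim [w]^{(1-\alpha/n)\cdot}$, enter. Keeping track of the exponents carefully, one configuration should give
\[
[W]\cdot[W]^{(1-\frac{\alpha}{n})p'-1}\cdot[W]^{q-1}=[W]^{(1-\frac{\alpha}{n})p'+q-1},
\]
and the dual configuration should give $p'+(1-\frac{\alpha}{n})q-1$. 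Verifying that the exponents combine this way, rather than the cruder sum obtained when both maximal operators carry full powers, is the delicate step.
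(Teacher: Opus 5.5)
Your outline is correct, and the exponents you predict are the right ones. The difference from the paper lies in how the argument is organized, not in its ingredients.

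\textbf{How the paper organizes it.} The paper never introduces Goldberg-type auxiliary maximal operators. It obtains the theorem as the case $k=0$, $r=1$ of Theorem~\ref{thm:MatWeightCommutatorBound}, via the two-weight bump result Theorem~\ref{thm:orliczbumps} with $V=W$.
\begin{itemize}
\item After convex body domination, H\"older's inequality bounds each sparse term, up to constants, by $\kappa\,\|\mathbf h\|_{\avgL^{(up')'}(Q)}\|\vecg\|_{\avgL^{(vq)'}(Q)}$. Here $\kappa=\sup_Q\big\||W(y)W^{-1}(x)|_{\op}\big\|_{\avgL^{vq}_y(\avgL^{up'}_x)(Q)}$, and $u-1\approx[W]_{\A_{p,q}}^{-p'}$, $v-1\approx[W]_{\A_{p,q}}^{-q}$ are chosen from the sharp reverse H\"older inequality (Lemma~\ref{lemma:matrix-RHI}).
\item A reducing operator for $W^{-1}$ in $L^{up'}$, plus reverse H\"older for both $W$ and $W^{-1}$, gives $\kappa\lesssim[W]_{\A_{p,q}}$.
\item The unweighted Orlicz maximal constants $B_{p,q}(\bar\Phi)B_{q'}(\bar\Psi)\lesssim(u')^{1/q}(v')^{1/q'}$, or $B_{p}(\bar\Phi)B_{q',p'}(\bar\Psi)\lesssim(u')^{1/p}(v')^{1/p'}$, produce the two entries of the minimum.
\end{itemize}
Since $\bar\Phi(t)\approx t^{(up')'}$ and $\bar\Psi(t)\approx t^{(vq)'}$, these Orlicz maximal operators are exactly the $L^t$-maximal operators your reverse H\"older step produces. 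So the two proofs do the same computation in opposite order. You extract $|\W_Q\W'_Q|_{\op}\lesssim[W]_{\A_{p,q}}$ first and use reverse H\"older inside the auxiliary operators. The paper applies H\"older first and spends reverse H\"older on $\kappa$. The paper's packaging gives two-weight and commutator bounds at no extra cost; yours is more direct for $I_\alpha$ alone.

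\textbf{The step you call delicate is routine.} By Lemma~\ref{lemma:matrix-RHI} for $W^{-1}\in\A_{q',p'}$, Lemma~\ref{opNorm:equiv}, and H\"older,
$$\avgint_Q|(\W'_Q)^{-1}W^{-1}\mathbf h|\lesssim\Big(\avgint_Q|\mathbf h|^t\Big)^{1/t},\qquad t=(p's)',\quad s-1\approx[W]_{\A_{p,q}}^{-p'}.$$
Hedberg's inequality then gives $\|M_{\alpha,t}\|_{L^p\to L^q}\lesssim((p/t)')^{p/(qt)}$, where $M_{\alpha,t}\mathbf h=(M_{\alpha t}|\mathbf h|^t)^{1/t}$. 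Since $(p/t)'=\frac{p's-1}{s-1}\approx[W]_{\A_{p,q}}^{p'}$, and the excess exponent $\frac{p}{qt}-\frac1q\approx s-1$ costs only a constant, this norm is $\lesssim[W]_{\A_{p,q}}^{p'/q}$.

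The same computation gives:
\begin{itemize}
\item $[W]_{\A_{p,q}}^{q-1}$ for the non-fractional operator on $L^{q'}$;
\item $[W]_{\A_{p,q}}^{p'-1}$ and $[W]_{\A_{p,q}}^{q/p'}$ in the swapped configuration.
\end{itemize}

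Two corrections to your write-up:
\begin{itemize}
\item The factor $1-\frac{\alpha}{n}$ enters only through the identity $1-\frac{\alpha}{n}=\frac1{p'}+\frac1q$ (for example $\frac{p'}{q}=(1-\frac{\alpha}{n})p'-1$). It does not come from the scalar weighted bound for $M_\alpha$.
\item Drop your interim display with $L^p\to L^p$ and $L^{q'}\to L^{q'}$ norms. The factor $|Q|^{\alpha/n}$ must be absorbed by one of the two maximal operators, as you say immediately afterwards.
\end{itemize}
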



The weight dependence in Theorem~\ref{thm:I-alpha-bound-intro} coincides with~\eqref{Eq:IMmatrixweightedineq} when $q\leq p'$ and is better when $q>p'$. As we noted above, the sharp exponent in the scalar case is $(1-\frac{\alpha}{n})\max\{p',q\}$, so our exponent is worse for all values of $p$ and $q$. It is not clear what the best exponent in the matrix case should be. This question is similar to the problem of the best exponent in the matrix case for Calder\'on-Zygmund operators discussed above. If we formally set $p=q=2$ and $\alpha=0$, then the exponent in \eqref{eqn:mfi1-intro} coincides with the sharp $\A_2$ bound \eqref{Eq:MatrixAtwoboundforCZO}.

\medskip

We prove Theorem \ref{thm:I-alpha-bound-intro} as a byproduct of our second application,  In the scalar case, given a  function $b$ the classical commutator $[b,T]$ of an operator $T$ is defined by 
\[[b,T]f\coloneqq bTf-T(bf).\] 
For vector-valued functions, we replace the scalar function by a matrix $B$:
\[[B,T]\vecf\coloneqq BT\vecf-T(B\vecf).\] 
To recapture the scalar operator,  let $B=bI_d$, where $I_d$ is the $d\times d$ identity matrix.
Matrix-weighted inequalities for commutators have been considered by a number of authors~\cite{MR4269407, MR4454483, MR3687948, CI2022twomatrixweightedfracint}.  

More recently, there has been interest in proving estimates for higher order commutators, such as the iterated commutator
\[
    C_B^k(T)\coloneqq[B,C_B^{k-1}(T)],\qquad C^0_B(T)\coloneqq T.
\]
Our approach to this problem is based on the observation by several authors that commutators can be written as  vector-valued functions (see~\cite{MR4269407,MR4454483, Lau2026rough}). This structure lets convex body domination  be applied to commutators. We build upon the recent work of Hyt\"onen \cite{hyt2024remarksonCBD}, who defined in the scalar case generalized commutators
\begin{equation*}\label{Eq:IntroGenCommutator}
    \sum_{i=1}^ka_iT(b_if),
\end{equation*}
where $a_i$ and $b_i$, $i=1,\ldots,k$, are scalar functions.  He used convex body domination to prove
 diagonal $L^p\to L^p$ estimates. The power of this generalization is that particular choices of the functions  $a_i,\,b_i$ yield the classical commutator, the iterated commutator, and multi-symbol iterated commutators.  

Starting from an observation in~\cite[Remark~7.12]{hyt2024remarksonCBD}, we extend these ideas to vector-valued functions $\vecf$ and matrix functions $A_i,\,B_i$, and consider the generalized commutators
\[ \sum_{i=1}^k A_iT(B_i\vecf). \]
We prove off-diagonal inequalities for these operators, but we note that our results also hold in the diagonal case $p=q$.  Following the approach used by Isralowitz and the first and third authors~\cite{MR3803292},  we first prove two-matrix-weight inequalities using the so-called bump conditions developed in the scalar case by a number of authors.    We give sufficient conditions for the  $L^p(W)\to L^q(V)$ boundedness of the generalized commutator.   See Theorem~\ref{thm:orliczbumps} and its corollaries for a precise statement of our result and applications to specific commutators. Here we note that for the first order commutator, our results coincide with earlier results by Cardenas and Isralowitz~\cite{CI2022twomatrixweightedfracint} and  Isralowitz, Pott, and Treil~\cite{MR4454483}.

We then prove one matrix-weight inequalities from the two-weight case.  To do so we use the so-called power bumps and the sharp reverse H\"older inequality for scalar $\A_p$ weights.  For instance, as a consequence we obtain the following  result for iterated commutators.

\begin{theorem}\label{thm:MatWeightCommutatorBoundIntro}
    Let $\alpha,p,q$ be as in Theorem \ref{thm:I-alpha-bound-intro}. Let $k$ be a nonnegative integer and suppose that $W\in \A_{p,q}$ and $b\in \operatorname{BMO}$. Then we have 
    \[
        \|WC^k_b(I_\alpha)\vecf\|_{L^q(\R^n,\R^d)}\leq C\|b\|_{BMO}^k [W]_{\A_{p,q}}^{\beta}\|W\vecf\|_{L^p(\R^n,\R^d)},
    \]
    where 
    \[
        \beta\coloneqq k\max\left\{p',q\right\}+\min\{(1-\frac{\alpha}{n})p'+q, p'+(1-\frac{\alpha}{n})q\}-1.
    \]
\end{theorem}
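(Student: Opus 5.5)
We follow the strategy sketched in the introduction: write the iterated commutator as a generalized commutator, apply off-diagonal convex body domination to it, and then estimate the resulting sparse forms with matrix weights. Expanding the binomial gives
\[
C_b^k(I_\alpha)\vecf=\sum_{j=0}^k \binom kj (-1)^{k-j}\, b^{j}\, I_\alpha\big(b^{k-j}\vecf\big),
\]
which is of the form $\sum_i A_i I_\alpha(B_i\vecf)$ with $A_i=b^jI_d$ and $B_i=b^{k-j}I_d$. The off-diagonal convex body domination for generalized commutators, i.e.\ Theorem~\ref{thm:I-alpha-sparse-intro} applied to the vector function $(B_1\vecf,\dots,B_{k+1}\vecf)$, should yield a pointwise inclusion. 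Using $\sum_i A_i(x)\llangle B_i\vecf\rrangle_Q$ and recentering $b$ at $\langle b\rangle_Q$, this inclusion reads
\[
C_b^k(I_\alpha)\vecf(x)\in C\sum_{i}\sum_{Q\in\Ss_i}\sum_{j=0}^k |Q|^{\frac\alpha n}\,|b(x)-\langle b\rangle_Q|^{j}\,\llangle (b-\langle b\rangle_Q)^{k-j}\vecf\rrangle_{\avgL^1(Q)}\mathbbm 1_Q(x).
\]
It therefore suffices to bound, for each $j$, the sparse form
\[
\sum_{Q\in\Ss}|Q|^{\frac\alpha n}\int_Q |b-\langle b\rangle_Q|^j\,|W(x)\vecv_Q|\,|\vecg(x)|\,dx,
\]
where $\vecv_Q\in\llangle (b-\langle b\rangle_Q)^{k-j}\vecf\rrangle_Q$ and $\vecg$ ranges over the unit ball of $L^{q'}$.

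Next we handle the matrix weight through reducing operators, as in the proof of Theorem~\ref{thm:I-alpha-bound-intro}. The convex body average satisfies
\[
|\mathcal W_Q\vecv_Q|\le \avgint_Q |\mathcal W_Q W^{-1}(y)|_{\op}\,|b-\langle b\rangle_Q|^{k-j}\,|W\vecf(y)|\,dy .
\]
Here $\mathcal W_Q$ is the $L^q$ reducing operator of $W$, and $x\mapsto |W(x)\mathcal W_Q^{-1}|_{\op}$ is controlled on the $x$-side. We then apply H\"older's inequality with exponents slightly off $p'$ and $q$ on each side. The factors involving $b$ are absorbed with John--Nirenberg, $\big(\avgint_Q|b-\langle b\rangle_Q|^{js}\big)^{1/s}\lesssim \|b\|_{BMO}^j$. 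The slightly larger powers of the weight are handled by the sharp reverse H\"older inequality for the scalar weights $|W\vecv|^q$ and $|W^{-1}\vecv|^{p'}$. These scalar weights lie in $A_\infty$ with constants controlled by $[W]_{\A_{p,q}}$, so one may take $1+\epsilon$ with $\epsilon\simeq [W]_{\A_{p,q}}^{-q}$ or $\epsilon\simeq [W]_{\A_{p,q}}^{-p'}$ respectively. Equivalently, this step is the two-weight Orlicz/power-bump result, Theorem~\ref{thm:orliczbumps}, with power bumps.

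After this reduction we are left with scalar sparse fractional operators acting on $|W\vecf|$ and $|\vecg|$. These are bounded by the one-weight argument already used for Theorem~\ref{thm:I-alpha-bound-intro}, which gives the factor $[W]_{\A_{p,q}}^{\min\{(1-\frac\alpha n)p'+q,\;p'+(1-\frac\alpha n)q\}-1}$.

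The main obstacle is the bookkeeping of the BMO factors. Passing from BMO to $L^{s}$ via John--Nirenberg costs a factor $s^{j}$ or $s^{k-j}$. With $s\simeq 1/\epsilon$, this contributes $[W]^{q}$ or $[W]^{p'}$ per power of $b$. We will first try to place each power of $b$ on the side whose reverse H\"older exponent is larger. In the worst case, however, each of the $k$ powers costs $\max\{p',q\}$, which yields exactly the term $k\max\{p',q\}$ in $\beta$. Finally we sum over $j$ and over the $3^n$ lattices, absorbing the constants.
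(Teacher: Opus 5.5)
Up to the last step your argument follows the paper's route. The paper also applies convex body domination to the $\R^{d(k+1)}$-valued function built from the $b^{i}\vecf$ (with $W^{-1}$ inserted; this is Theorem~\ref{thm:orliczbumps}). It then uses power bumps with $u-1\approx[W]_{\A_{p,q}}^{-p'}$ and $v-1\approx[W]_{\A_{p,q}}^{-q}$ dictated by the sharp reverse H\"older inequality, together with reducing operators and John--Nirenberg. Your binomial recentering, whose mixed terms cost $[W]_{\A_{p,q}}^{jq+(k-j)p'}\le[W]_{\A_{p,q}}^{k\max\{p',q\}}$, is a harmless variant of the paper's split $|b(y)-b(x)|^k\lesssim|b(y)-\langle b\rangle_Q|^k+|b(x)-\langle b\rangle_Q|^k$. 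Your displayed inclusion is correct, but note where it comes from. You apply the linear map $\mathbf u\mapsto\sum_j\binom kj(-1)^{k-j}b(x)^j\mathbf u_j$ to the \emph{joint} body $\llangle\mathbf F\rrangle_{\avgL^1(Q)}$, which gives $\llangle(b(x)-b)^k\vecf\rrangle_{\avgL^1(Q)}$, and only then recenter. Passing first to $\sum_iA_i(x)\llangle B_i\vecf\rrangle_{\avgL^1(Q)}$, as your wording suggests, would destroy the cancellation.

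The gap is the final step. Once reducing operators, reverse H\"older and John--Nirenberg have been used, no weight remains. You are left with the unweighted bumped form
\[ \sum_{Q\in\Ss}|Q|^{\frac{\alpha}{n}+1}\,\|W\vecf\|_{\avgL^{(up')'}(Q)}\,\|\vecg\|_{\avgL^{(vq)'}(Q)}, \]
and a weighted bound for $I_\alpha$ such as Theorem~\ref{thm:I-alpha-bound-intro} does not control it. Moreover, in the paper Theorem~\ref{thm:I-alpha-bound-intro} is obtained as the case $k=0$ of the very statement you are proving, so deferring to ``its one-weight argument'' is circular.

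What is needed is the following. By sparseness, bound the form either by $\int M_{\alpha,\bar\Phi}(|W\vecf|)\,M_{\bar\Psi}(|\vecg|)\,dx$ or by $\int M_{\bar\Phi}(|W\vecf|)\,M_{\alpha,\bar\Psi}(|\vecg|)\,dx$, where $\bar\Phi(t)\approx t^{(up')'}$ and $\bar\Psi(t)\approx t^{(vq)'}$. Then use the unweighted bounds \eqref{Eq:FracOrliczMaximalEst} with their explicit blow-up as $u,v\to1$: $B_{p,q}(\bar\Phi)B_{q'}(\bar\Psi)\lesssim(u')^{1/q}(v')^{1/q'}$, and respectively $B_{p}(\bar\Phi)B_{q',p'}(\bar\Psi)\lesssim(u')^{1/p}(v')^{1/p'}$. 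With $u'\approx[W]_{\A_{p,q}}^{p'}$ and $v'\approx[W]_{\A_{p,q}}^{q}$ this gives $[W]_{\A_{p,q}}^{\min\{p'/q+q-1,\;p'-1+q/p'\}}$. The freedom to place $|Q|^{\alpha/n}$ on either maximal function is exactly the source of the $\min$ in $\beta$.

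Your bookkeeping also omits a factor. When reverse H\"older is applied to $|\W_Q^qW^{-1}|_{\op}$ in $\avgL^{up'}(Q)$, you pick up $|\W_Q^q\overline{\W}_Q^{p'}|_{\op}\lesssim[W]_{\A_{p,q}}$. Only the identity $1+\min\{p'/q+q-1,\,p'-1+q/p'\}=\min\{(1-\frac\alpha n)p'+q,\,p'+(1-\frac\alpha n)q\}-1$ yields the stated exponent. As written, the weight-dependent part of $\beta$ is asserted rather than proved.
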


When $k=0$, the commutator reduces to the fractional integral operator $I_\alpha$, which gives us the proof of Theorem~\ref{thm:I-alpha-bound-intro}.  The contribution of the commutators to the exponent $\beta$, $k\max\left\{p',q\right\}$, is the same as in the scalar case: see~\cite[Corollary 5.2]{BMMST2019CommutatorBoundsUnified}.  This suggests that if the constant for $I_\alpha$ in Theorem~\ref{thm:I-alpha-bound-intro} is sharp, then so is the constant for the commutator.

\begin{remark}
    As we were working on this paper, we learned that Isralowitz, Rivera-R\'\i os, and S\'aez-Rivas~\cite{IRS2026convexbodydominationcommutator} had developed a different approach to matrix-weighted norm inequalities for commutators of diagonal operators (i.e., when $\alpha=0$).  There is some overlap between their results and ours:  see the discussion in Remarks~\ref{remark:irrsr0} and~\ref{remark:IRRSR} below.
\end{remark}

As the final application of convex body domination, we prove matrix-weighted Gagliardo-Nirenberg-Sobolev inequalities.  For instance, we prove the following result.

\begin{theorem} \label{thm:GNS-intro}
Fix $1<p<n$ and define the Sobolev exponent $p^*$ by $\frac{1}{p}-\frac{1}{p^*}= \frac{1}{n}$.  If $W \in \A_{p,p^*}$, then for all $\vecf$ such that $D\vecf \in L^p(W)$, 
\begin{equation}\label{Eq:IntroGNSineq}
    \|W\vecf\|_{L^{p^*}(\R^n,\R^d)}\leq C[W]_{\A_{p,p^*}}^{\min\{\frac{p'}{n'}+p^*,\frac{p^*}{n'}+p'\}-1} \|WD\vecf\|_{L^p(\R^n,\R^d)},
\end{equation}
where $D\vecf$ is the derivative matrix of $\vecf$.  
\end{theorem}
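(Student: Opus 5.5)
The plan is to reduce the inequality to Theorem~\ref{thm:I-alpha-bound-intro} with $\alpha=1$ and $q=p^*$, using the classical representation of a function by the fractional integral of its gradient. Indeed, with $\frac1p-\frac1{p^*}=\frac1n$ and $\alpha=1$, the exponent in Theorem~\ref{thm:I-alpha-bound-intro} is
\[
\min\{(1-\tfrac1n)p'+p^*,\ p'+(1-\tfrac1n)p^*\}-1=\min\{\tfrac{p'}{n'}+p^*,\ \tfrac{p^*}{n'}+p'\}-1,
\]
which is exactly the exponent in \eqref{Eq:IntroGNSineq}. So the whole task is to pass from $W\vecf$ to $W I_1(|D\vecf|)$ in a way compatible with matrix weights.

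First I would work with $\vecf\in C_c^\infty(\rn,\R^d)$ and use the pointwise representation, valid componentwise,
\[
\vecf(x)=c_n\int_{\rn}\frac{D\vecf(y)\,(x-y)}{|x-y|^{n}}\,dy ,
\]
where $D\vecf(y)(x-y)$ is the matrix $D\vecf(y)$ applied to the vector $x-y$. The naive bound $|W(x)\vecf(x)|\lesssim I_1(|D\vecf|)(x)$ is wrong, because the weight cannot be moved inside. Instead I would write
\[
\vecf(x)=c_n\sum_{j=1}^n \int_{\rn}\frac{\partial_j\vecf(y)}{|x-y|^{n-1}}\cdot\frac{x_j-y_j}{|x-y|}\,dy .
\]
Each summand is a fractional operator with kernel $K_j(x,y)=\frac{x_j-y_j}{|x-y|^{n}}$ applied to the vector function $\partial_j\vecf$. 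This kernel is a smooth fractional Calder\'on--Zygmund kernel of order $\alpha=1$, dominated in size by $|x-y|^{1-n}$. Convex body domination (the general off-diagonal version that the paper develops, which covers fractional singular integrals) then gives
\[
T_j(\partial_j\vecf)(x)\in C\sum_i\sum_{Q\in\Ss_i}|Q|^{\frac1n}\llangle\partial_j\vecf\rrangle_{\avgL^1(Q)}\mathbbm 1_Q(x).
\]
The weighted sparse estimate used to prove Theorem~\ref{thm:I-alpha-bound-intro} (the case $k=0$ of Theorem~\ref{thm:MatWeightCommutatorBoundIntro}) applies to these sparse convex-body operators. It yields
\[
\|WT_j(\partial_j\vecf)\|_{L^{p^*}}\lesssim[W]^{\gamma}\|W\partial_j\vecf\|_{L^p},
\]
where $\gamma$ is the exponent above. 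Since $|W\partial_j\vecf|\le|WD\vecf|$ pointwise (up to the choice of matrix norm), summing over $j$ gives \eqref{Eq:IntroGNSineq} for smooth compactly supported $\vecf$.

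A cleaner alternative avoids singular kernels entirely. Because the $K_j$ are positive-homogeneous with size bounded by the Riesz kernel, one can check directly that $\vecf(x)$ lies in $C\,I_1$ applied to a convex-set-valued function built from $D\vecf$: for each $y$, $D\vecf(y)\frac{x-y}{|x-y|}$ belongs to the symmetric convex set $\{D\vecf(y)u:|u|\le1\}$. Then one dominates by sparse convex bodies of the averages of $D\vecf$. I would present whichever route is supported by the paper's general domination theorem.

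The main obstacle is the final density step: extending from $C_c^\infty$ to all $\vecf$ with $D\vecf\in L^p(W)$, which requires a suitable normalization such as $\vecf$ vanishing at infinity. I would handle it by approximating $\vecf$ with smooth cutoffs and mollifications. Since $W\in\A_{p,p^*}$, the scalar weights $|W\vece|^{p^*}$ and $|W^{-1}\vece|^{p'}$ are locally integrable and doubling-type. This gives convergence in $L^p(W)$ of the derivatives, and Fatou's lemma on a subsequence converging almost everywhere then recovers the inequality.
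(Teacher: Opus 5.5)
Your main route is the paper's proof. Reduce to $\vecf\in C_c^\infty$, write $\vecf=\sum_j I_1^j(\partial_j\vecf)$ via the representation formula, observe that each kernel $\frac{x_j-y_j}{|x-y|^n}$ is dominated by $|x-y|^{1-n}$ so Corollary~\ref{Cor:MTalpha-bd} applies, then invoke the one-weight bound (Theorem~\ref{thm:MatWeightCommutatorBound} with $k=0$, $r=1$, $\alpha=1$, $q=p^*$) for each $j$ and sum using $|W\partial_j\vecf|\le|WD\vecf|$.

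The set-valued ``cleaner alternative'' is unnecessary and is not supported by any domination result in the paper (it would need sparse domination of $I_1$ acting on set-valued inputs), so commit to the first route. Your caveat about normalizing $\vecf$ at infinity in the density step is fair; the paper passes over it as standard.
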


 Theorem~\ref{thm:GNS-intro} was first proved in \cite{MR4030471} with the larger exponent $\frac{p'}{n'}+p^*-1$. Their proof mirrored the fairly long proof of \eqref{Eq:IMmatrixweightedineq}. Here, however, we are able to use  convex body domination directly to prove \eqref{Eq:IntroGNSineq}. Moreover, using our approach we are able to prove a more general version of this result, replacing $D\vecf$ by the fractional derivative matrix  $D^\alpha\vecf$, $0 <\alpha<1$.  

\medskip

\medskip

\subsection*{Organization}
The remainder of this paper is organized as follows. In Section \ref{section:prelim}, we recall some definitions and known results that will be used later in the paper. 

In Section \ref{section:bootstrapping}, we prove convex body domination for general off-diagonal operators.  Our main result is Theorem~\ref{thm:pointwiseCBDfromweaktypebounds}, which is a generalization of the convex body domination for Calder\'on-Zygmund operators proved in~\cite{MR3689742}.  To prove it, we first develop a general theory of bootstrapping pointwise scalar inequalities for off-diagonal operators to convex body inclusion relations for these operators applied to vector-valued functions:  see Proposition~\ref{prop:pointwisebootstrap}.  This result can be thought of as a pointwise version of the form bound proved in~\cite{hyt2024remarksonCBD}.  Theorem~\ref{thm:pointwiseCBDfromweaktypebounds} is stated and proved for off-diagonal operators $T$ that satisfy weak-type $(r,s)$ inequalities for $1\leq r,\,s<\infty$, and such that the truncated maximal operator introduced by Lerner~\cite{Ler2016OnPointwise} satisfies similar weak-type bounds.  We then prove Theorem~\ref{thm:I-alpha-bound-intro} as a consequence of Theorem~\ref{thm:pointwiseCBDfromweaktypebounds}.  In Corollary~\ref{Cor:MTalpha-bd} we prove convex body sparse bounds for fractional singular integrals; these operators arise naturally in studying Gagliardo-Nirenberg-Sobolev inequalities and will be applied in Section~\ref{section:sobolev}.   Finally, in Corollary~\ref{Cor:MTalpha-hormander-bd} we prove convex body domination for a family of fractional operators that satisfy a H\"ormander-type kernel condition.  These operators have been considered by a number of different authors, and we give some examples and references at the end of the section.

In Section \ref{section:commutators} we prove quantitative, matrix-weighted inequalities that follow from convex body domination. We will first consider commutators;  the bounds for the operators themselves are then a consequence of these commutator bounds. We first  prove a sufficient two-weight bump condition for the generalized commutator in Theorem~\ref{thm:orliczbumps}.  We then describe the particular choices of the $A_i,\,B_i$ that yield the first order commutator, the iterated commutator, and multi-symbol commutators. Next we use  Theorem~\ref{thm:orliczbumps} to prove one-weight inequalities. With the appropriate choice of bump functions, we use the sharp reverse Hölder inequality to prove a generalized version of Theorem \ref{thm:MatWeightCommutatorBoundIntro}:  see Theorem~\ref{thm:MatWeightCommutatorBound}.

In Section \ref{section:sobolev} we prove matrix-weighted Gagliardo-Nirenberg-Sobolev inequalities for vector-valued functions.  We first prove Theorem~\ref{thm:GNS-intro} as, essentially, an application of Theorem~\ref{thm:I-alpha-bound-intro}, and then prove a more general version for fractional derivatives in the same way:  see Theorem~\ref{Thm:MatrixWeightedFractionalGradientBound}.

\section{Preliminaries}
\label{section:prelim}

Throughout this paper, we will use the following notation.
 In Euclidean space, the constant $n$ will denote
the dimension of $\R^n$, which will be the domain of our functions.  The value
$d$ will denote the dimension of vector and set-valued functions.     For $1\leq p \leq \infty$, $L^p(\R^n)$ will denote the Lebesgue space
 of scalar functions, and $L^p(\R^n,\R^d)$ will denote the Lebesgue
 space of vector-valued functions.  Given a cube $Q$, we define the normalized $L^p$ norm on $Q$ by
 \[ \|f\|_{\avgL^p(Q)} = \bigg(\avgint_Q |f|^p\,dx\bigg)^{\frac1p}
 = \bigg(\frac{1}{|Q|}\int_Q |f|^p\,dx\bigg)^{\frac1p}. \]

 Given $\vecv=(v_1,\ldots,v_d)^t \in \R^d$, the Euclidean norm of $\vecv$ will
 be denoted by $|\vecv|$ and for $j=1,\ldots,d$, we define $[\vecv ]_j= v_j$.  The standard orthonormal basis in $\R^d$ will
 be denoted by $\{\vece_i\}_{i=1}^d$.  The open unit ball in $\R^d$ 
 will be denoted by ${\mathbf B}$ and its
 closure by $\overline{\mathbf{B}}$.  Matrices will be $d\times d$
 matrices with real-valued entries unless otherwise specified.  The
 set of all such matrices will be denoted by $\Md$.  The set of all
 $d\times d$, symmetric (i.e., self-adjoint), positive semidefinite matrices will be denoted by
 $\Sd$.  We will denote the transpose of a matrix $W$ by $W^*$.  
 
 Constants will be denoted by $C$ and $c$, and their values may change from line to line.  To show the dependence of constants on particular parameters, we will write, for instance, $C(n,d)$.  Some constants will only be given implicitly.  Given
 two quantities $A$ and $B$, we will write $A \lesssim B$, or
 $B\gtrsim A$ if there is a constant $c>0$ such that $A\leq cB$.  If
 $A\lesssim B$ and $B\lesssim A$, we will write $A\approx B$.

\subsection*{Dyadic cubes and sparse sets}
In this section we give the basic properties of dyadic cubes and sparse families.  We follow the approach developed in~\cite{MR4007575}.   By a cube $Q$ we will always mean a cube whose sides are parallel to the coordinate axes.
Given a cube $Q$, let $\mathcal{D}_k(Q)$ be the collection of cubes obtained by subdividing $Q$ into $2^{kn}$ congruent subcubes with disjoint interiors. Let $\mathcal{D}(Q)\coloneqq\bigcup_{k=0}^\infty\mathcal{D}_k(Q)$.

\begin{defn}
We say that a collection of cubes $\mathcal D$ is a dyadic lattice, if it satisfies the following three properties:
\begin{enumerate}
    \item If $Q\in\mathcal D$ and $Q'\in \mathcal D(Q)$, then $Q'\in \mathcal D$.
    \item If $Q',Q''\in\mathcal D$, then there exists $Q\in\mathcal{D}$ such that $Q',Q''\in\mathcal{D}(Q)$.
    \item Every compact set in $\R^d$ is contained in some cube from $\mathcal D$.
\end{enumerate}
We refer to the elements of a dyadic lattice as dyadic cubes.
\end{defn}
We note that the so-called standard dyadic lattice does not satisfy conditions (2) and (3). However, it is fairly straightforward to create  collections of cubes that do satisfy them. This formulation of dyadic lattices is due to \cite[Section 2]{MR4007575}, and we refer the reader there for more details. The following theorem is known as the three lattice trick and the proof can be found in \cite[Theorem 3.1]{MR4007575}.

\begin{theorem} \label{thm:three-lattice}
    For every dyadic lattice $\mathcal{D}$ there exists $3^n$ dyadic lattices $\mathcal{D}_j$, $j=1,\dots,3^n$, such that
    \[
        \{3Q\,\colon\,Q\in \mathcal{D}\}=\bigcup_{j=1}^{3^n}\mathcal{D}_j.
    \]
\end{theorem}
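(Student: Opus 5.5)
The statement immediately above is the three lattice trick (Theorem~\ref{thm:three-lattice}). We plan to construct the lattices $\mathcal{D}_j$ explicitly by sorting the tripled cubes $3Q$ according to their position relative to the parent structure. Fix a cube $Q_0\in\mathcal{D}$ and, for each $Q\in\mathcal{D}$ with side length $\ell(Q)$, look at the lower-left corner of $Q$ modulo $3\ell(Q)$ in each coordinate. For a fixed generation, the corners of the cubes in that generation form a translate of the lattice $\ell(Q)\mathbb{Z}^n$. Hence the residue class of the corner modulo $3\ell(Q)\mathbb{Z}^n$ takes $3^n$ values, indexed by $j\in\{0,1,2\}^n$.

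The key arithmetic point is that $2\equiv -1 \pmod 3$. Passing from a cube of side $\ell$ to its children of side $\ell/2$, a corner $x_0+\ell k$ becomes one of the corners $x_0+(\ell/2)(2k+\epsilon)$ with $\epsilon\in\{0,1\}^n$. We therefore plan to index the residue by the coordinates of the integer vector, taken mod $3$, with a sign or shift convention that alternates with the generation. The family $\mathcal{D}_j$ is then defined as all $3Q$ whose (appropriately normalized) corner residue equals $j$.

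We then verify the three axioms of a dyadic lattice for each $\mathcal{D}_j$.

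\emph{Property (1).} We must show that subdividing $3Q$ into $2^n$ congruent cubes produces cubes of the form $3Q'$, with $Q'\in\mathcal{D}$ lying in the same class $j$. The children of $3Q$ have side $3\ell/2$. Among the $3^n$ cubes of side $\ell/2$ in $\mathcal{D}$ that tile $3Q$'s region at the next generation, exactly the right ones have tripled cubes equal to these children. We will check this by the mod-$3$ computation in one dimension and then take products, since everything is a product structure.

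\emph{Property (2).} Given two cubes in $\mathcal{D}_j$, we climb ancestors in $\mathcal{D}$ of the underlying $Q$'s. Because the class $j$ is preserved along the parent chain once it is chosen compatibly with property (1), some common ancestor in $\mathcal{D}$ has its triple in $\mathcal{D}_j$ containing both. We will use property (2) of $\mathcal{D}$ together with the fact that $\mathcal{D}$ has cubes of arbitrarily large size.

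\emph{Property (3).} A compact set lies in some large $Q\in\mathcal{D}$. The cubes of class $j$ at any sufficiently coarse generation have triples covering $\mathbb{R}^n$, and this yields property (3).

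Finally, the union of the $\mathcal{D}_j$ over $j$ is all of $\{3Q\}$ by construction, since every $Q$ lies in exactly one residue class.

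The main obstacle is making the residue labeling consistent across generations, so that children of a triple in class $j$ are again triples in class $j$ rather than drifting to another class. The first thing to try is the one-dimensional computation: in $\mathcal{D}$ on $\mathbb{R}$, the intervals $[a+k\ell,a+(k+1)\ell)$ have triples $[a+(k-1)\ell,a+(k+2)\ell)$. Halving such a triple gives intervals starting at $a+(k-1)\ell$ and $a+(k+1/2)\ell$. Each of these must be identified as the triple of a specific child, and the class condition is then read off as ``$k \bmod 3$'' transformed by multiplication by $2$ at each step. If the direct check is too messy, we can instead cite \cite[Theorem 3.1]{MR4007575}, where this is carried out.
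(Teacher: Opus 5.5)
The paper does not prove this statement; it cites \cite[Theorem~3.1]{MR4007575}, which is your fallback. Your direct construction, however, has a genuine gap at property (2), caused by a wrong picture of how the triples nest.

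You climb the $\mathcal{D}$-ancestors of $Q'$ and $Q''$ and assert that, since the class is preserved along the parent chain, some common ancestor $R\in\mathcal{D}$ has $3R\in\mathcal{D}_j$ containing both triples. But the tree structure of $\{3Q\}$ is not inherited from $\mathcal{D}$. If $\widehat{Q}$ is the parent of $Q$, then $3Q$ is never a child of $3\widehat{Q}$: in one dimension, $3[0,1)=[-1,2)$ straddles the midpoint of $3[0,2)=[-2,4)$. So $3Q$ and $3\widehat{Q}$ never lie in the same $\mathcal{D}_j$. The children of $3Q$ are triples of children of the diagonal neighbors $Q+\ell(Q)\sigma$, $\sigma\in\{-1,1\}^n$, not of children of $Q$; for $Q=[0,1)$ they are $3[-\tfrac12,0)$ and $3[1,\tfrac32)$. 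Dually, the parent of $3Q$ in its own lattice is $3P$ with $P=\widehat{Q}+2\ell(Q)\sigma$, which is never an ancestor of $Q$. So the class of $3R$ for a $\mathcal{D}$-ancestor $R$ is not controlled by that of $3Q'$.

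Moreover, a common ancestor of $Q'$ and $Q''$ need not contain $3Q'$, and property (2) of $\mathcal{D}$ plus arbitrarily large cubes cannot supply one. For example, $\mathcal{D}=\bigcup_{k\ge 0}\mathcal{D}([0,2^k))$ on $\R$ satisfies (1) and (2) and has arbitrarily large cubes. Yet $3[0,1)$ lies in no cube of $\mathcal{D}$, and it has no possible parent among the triples, since $[-1,5)=3[1,3)$ and $[-4,2)=3[-2,0)$ with $[1,3),[-2,0)\notin\mathcal{D}$. So the theorem fails for this $\mathcal{D}$. Property (3) of $\mathcal{D}$ is indispensable, and it never enters your argument.

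The missing idea is the following. Arrange the labels so that, at each generation, the class-$j$ triples are exactly the $3R$ with the corner of $R$ in a single coset of $3\ell(R)\mathbb{Z}^n$. These triples then tile $\R^n$ along the grid of that generation. Hence each $R\in\mathcal{D}$ lies in exactly one class-$j$ triple $3R''$, where $R''$ is one of the $3^n$ cubes $R+\ell(R)\tau$, $\tau\in\{-1,0,1\}^n$; these represent every class once.
\begin{itemize}
\item Combined with property (3) of $\mathcal{D}$, this gives (3) for $\mathcal{D}_j$. Mere ``covering,'' as in your sketch, does not place a compact set inside a single cube.
\item Property (2) then follows by applying (3) to $\overline{3Q'\cup 3Q''}$. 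By (1) and the tiling at each generation, $3Q'\subset 3R''$ with both in $\mathcal{D}_j$ forces $3Q'\in\mathcal{D}(3R'')$.
\end{itemize}

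Separately, your labeling by integer coordinates relative to a fixed corner $x_0$ breaks down at coarse generations. No point is a vertex of cubes of every generation, since that is incompatible with (2) and (3). The fix is available because $2$ is invertible mod $3$, so you can reduce dyadic rationals mod $3$. The label $(\mathrm{corner}(3Q)-x_0)/\ell(Q_0) \bmod 3\in(\mathbb{Z}/3\mathbb{Z})^n$ is then well defined and agrees with the coset description above at every generation. It is also unchanged from $3Q$ to its children: when $\ell(Q)=2^g\ell(Q_0)$, their corners differ by $\tfrac32\ell(Q)\epsilon=3\cdot 2^{g-1}\ell(Q_0)\epsilon$ with $\epsilon\in\{0,1\}^n$. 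So no alternating convention is needed.
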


An important class of families of cubes are sparse collections.

\begin{defn}
Given $0<\eta<1$, we say that a collection of cubes $\mathcal{S}\subset\mathcal{D}$ is $\eta$-sparse if for every cube $Q\in \mathcal{Q}$, there exists a subset $E_Q$ such that $|E_Q|\geq \eta|Q|$ and the subsets $E_Q$ are pairwise disjoint. 
\end{defn}
We can relax this definition in the sense that the sparse cubes can come from finitely many different dyadic lattices. This means that we may inflate the cubes in the sparse collection by a factor of $3$. Indeed, by the three lattice trick we can write
\[
    \{3Q\,\colon\,Q\in \mathcal{S}\}=\bigcup_{j=1}^{3^n}\mathcal{S}_j,
\]
where $\mathcal{S}_j$ is a subset of a dyadic lattice and for each cube  $3Q\in \mathcal{S}_j$,
\[
  |E_{3Q}| :=  |E_Q|\geq \eta|Q|=3^{-n}\eta|3Q|.
\]
Thus, the collections $\mathcal{S}_j$ are $3^{-n}\eta$-sparse.

\subsection*{Matrix weights}
We gather together some basic properties of matrix weights.  For further information, see~\cite{mb-2026}.
Given a matrix $W\in \Md$, its operator norm is defined by 
\[ |W|_{\op} =  \sup_{\substack{\vecv\in \R^d\\ |\vecv|=1}} |W\vecv|.\]
The following lemmas are very useful for estimating operator norms.

\begin{lemma}{\cite[Lemma 3.2]{MR1928089}}\label{opNorm:equiv}
If $\{\vecv_1, \ldots, \vecv_d\}$ is any orthonormal basis in $\R^d$, then for any  matrix $W\in \Md$ and $r>0$, 
\[
	 \bigg(\sum_{i=1}^d |W \vecv_i|^r\bigg)^{\frac{1}{r}} \approx |W|_{\op}, 
\]
where the implicit constants depend only on $d$ and $r$.
\end{lemma}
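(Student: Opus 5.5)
The plan is to reduce both inequalities to the case of the standard basis by writing everything in terms of $W^*W$, and then to handle the passage between $\ell^r$ and $\ell^2$ sums by equivalence of norms on $\R^d$.

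First, given the orthonormal basis $\{\vecv_i\}_{i=1}^d$, let $U$ be the orthogonal matrix whose columns are the $\vecv_i$. Then $|WU|_{\op}=|W|_{\op}$ and $W\vecv_i = (WU)\vece_i$. Replacing $W$ by $WU$, it suffices to treat the standard basis $\vecv_i=\vece_i$.

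Next, compare the $\ell^r$ sum with the $\ell^2$ sum. On $\R^d$ all $\ell^r$ quasi-norms are equivalent with constants depending only on $d$ and $r$: for $0<r<\infty$,
\[
\min\{1,d^{\frac1r-\frac12}\}\,\Big(\sum_i a_i^2\Big)^{\frac12}\le \Big(\sum_i a_i^r\Big)^{\frac1r}\le \max\{1,d^{\frac1r-\frac12}\}\,\Big(\sum_i a_i^2\Big)^{\frac12}.
\]
This follows from Hölder's inequality when $r<2$ and from monotonicity of $\ell^r$ norms when $r>2$. Applying it with $a_i=|W\vece_i|$ reduces the lemma to the case $r=2$.

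For $r=2$ the quantity is explicit:
\[
\sum_i |W\vece_i|^2=\operatorname{tr}(W^*W)=\|W\|_{HS}^2.
\]
Each $|W\vece_i|\le |W|_{\op}$, so $\|W\|_{HS}\le \sqrt d\,|W|_{\op}$. Conversely, for a unit vector $\vecv=\sum_i c_i\vece_i$, the triangle inequality and Cauchy--Schwarz give
\[
|W\vecv|\le \sum_i |c_i|\,|W\vece_i|\le \Big(\sum_i |W\vece_i|^2\Big)^{\frac12}.
\]
Hence $|W|_{\op}\le \|W\|_{HS}$. Combining these with the norm equivalence from the previous step gives the claim, with constants depending only on $d$ and $r$.

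There is no real obstacle here. The only care needed is that for $r<1$ the expression $(\sum a_i^r)^{1/r}$ is not a norm, so the triangle inequality should not be applied to it. The argument above avoids this: it uses only the coordinatewise comparison of the finite sums and applies the triangle inequality to vectors in $\R^d$, never to the $\ell^r$ expression.
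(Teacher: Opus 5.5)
Your proof is correct. The paper gives no proof of this lemma; it simply quotes it from the cited reference, and your argument is the standard elementary one. You reduce to $r=2$ using the equivalence of $\ell^r$ quasi-norms on $\R^d$, and then compare $\big(\sum_i |W\vecv_i|^2\big)^{1/2}$ with $|W|_{\op}$ using $|W\vecv_i|\le |W|_{\op}$ and Cauchy--Schwarz.

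Two minor points:

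\begin{enumerate}
\item The change of basis in your first step is unnecessary. The $r=2$ argument works verbatim for any orthonormal basis, writing a unit vector as $\vecv=\sum_i c_i\vecv_i$ with $\sum_i c_i^2=1$.

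\item In the $\ell^r$--$\ell^2$ comparison, each case uses both tools. Hölder's inequality gives one direction and monotonicity of $\ell^r$ norms gives the other, for $r<2$ as well as for $r>2$. Your sentence assigns one tool per case, but your displayed constants are right.
\end{enumerate}
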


The following result is well-known.

\begin{lemma}\label{SelfAdjointCommutes}
Given $U,\,V \in \Ss_d$, $|UV|_{\op} = |VU|_{\op}$.
\end{lemma}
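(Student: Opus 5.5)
Since $U$ and $V$ are self-adjoint, the plan is to reduce the claim to the fact that a matrix and its transpose have the same operator norm. Because $U^*=U$ and $V^*=V$, we have
\[
(UV)^* = V^*U^* = VU .
\]
So it suffices to show $|A^*|_{\op}=|A|_{\op}$ for every $A\in\Md$ and then take $A=UV$.

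To prove $|A^*|_{\op}=|A|_{\op}$, I will use the duality characterization of the operator norm:
\[
|A|_{\op}=\sup_{|\vecu|=|\vecv|=1}\langle A\vecv,\vecu\rangle .
\]
This holds because for fixed $\vecv$ the supremum over unit vectors $\vecu$ of $\langle A\vecv,\vecu\rangle$ is exactly $|A\vecv|$. This is the only step needing a moment's care, and it is immediate from Cauchy--Schwarz together with the choice $\vecu=A\vecv/|A\vecv|$ when $A\vecv\neq 0$.

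Since $\langle A\vecv,\vecu\rangle=\langle \vecv,A^*\vecu\rangle$, the same expression, with the roles of $\vecu$ and $\vecv$ exchanged, computes $|A^*|_{\op}$. Hence
\[
|UV|_{\op}=|(UV)^*|_{\op}=|VU|_{\op}.
\]
Positive semidefiniteness is not needed; only symmetry is used.

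An alternative route is via $|A|_{\op}^2=\lambda_{\max}(A^*A)$. The matrices $A^*A$ and $AA^*$ have the same nonzero eigenvalues, so $|A|_{\op}=|A^*|_{\op}$ follows again. I would present the duality argument, since it is shorter. I expect no real obstacle.
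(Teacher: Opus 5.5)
Your proof is correct: symmetry gives $(UV)^*=VU$, and the duality formula $|A|_{\op}=\sup_{|\vecu|=|\vecv|=1}\langle A\vecv,\vecu\rangle$ shows that $|A^*|_{\op}=|A|_{\op}$. The paper gives no proof and simply calls the result well-known; your adjoint argument is the standard one, and you are right that only symmetry is used, not positive semidefiniteness.
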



The following lemma is referred to as the Cordes inequality~\cite[Lemma~5.1, p.~24]{MR890743}.

\begin{lemma} \label{lemma:cordes}
Given matrices $U,\,V \in \Sd$ and $0\leq s \leq 1$, $|U^sV^s|_{\op}\leq |UV|_{\op}^s$. Consequently, if $r\geq 1$,
$|UV|_{\op}^r \leq |U^rV^r|_{\op}$.
\end{lemma}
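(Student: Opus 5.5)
The plan is to reduce the Cordes inequality to the Löwner–Heinz monotonicity of the map $t\mapsto t^s$, $0\le s\le 1$, on positive semidefinite matrices, which I will take as known (equivalently, $0\le A\le B$ implies $A^s\le B^s$).

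\textbf{Step 1: the main inequality.} The first observation is that for $U,V\in\Sd$ and any $t>0$,
\[ |UV|_{\op}^2 = |VU^2V|_{\op}, \]
since $|X|_{\op}^2=|X^*X|_{\op}$ and $(UV)^*=VU$. Set $\lambda=|UV|_{\op}$. Then $VU^2V\le \lambda^2 I$. Assume first that $V$ is invertible; the singular case follows by replacing $V$ with $V+\epsilon I$ and letting $\epsilon\to0$, using continuity of both sides. Conjugating by $V^{-1}$ gives $U^2\le \lambda^2 V^{-2}$. By Löwner–Heinz applied with exponent $s\in[0,1]$ to the operators $U^2\le\lambda^2V^{-2}$, we get $U^{2s}\le \lambda^{2s}V^{-2s}$. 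Conjugating back by $V^s$ yields $V^sU^{2s}V^s\le \lambda^{2s}I$, that is,
\[ |U^sV^s|_{\op}^2=|V^sU^{2s}V^s|_{\op}\le \lambda^{2s}. \]
This gives $|U^sV^s|_{\op}\le |UV|_{\op}^s$.

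\textbf{Step 2: the consequence.} For $r\ge1$, apply the inequality to $U^r,V^r\in\Sd$ with $s=1/r$. This gives $|UV|_{\op}\le |U^rV^r|_{\op}^{1/r}$, and raising both sides to the power $r$ gives the claim.

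\textbf{Main obstacle.} The one nontrivial ingredient is the operator monotonicity of $t^s$. If it must be proved, I would use the integral representation
\[ t^s=c_s\int_0^\infty \frac{t}{t+\mu}\,\mu^{s-1}\,d\mu \]
for $0<s<1$, together with the fact that $A\mapsto A(A+\mu)^{-1}=I-\mu(A+\mu)^{-1}$ is monotone, because inversion reverses order on positive definite matrices.
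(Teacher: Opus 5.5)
Your argument is correct. The paper does not prove this lemma: it quotes the first inequality from Cordes' monograph, and the ``consequently'' clause is exactly your Step 2, namely applying the inequality to $U^r,V^r$ with $s=1/r$ and using $(U^r)^{1/r}=U$.

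So your Step 1 supplies a self-contained proof of what the paper outsources. It follows the standard route by which the L\"owner--Heinz theorem yields Cordes' inequality. The identity $|X|_{\op}^2=|X^*X|_{\op}$ turns the norm bound into the order relation $VU^2V\le\lambda^2 I$. Two congruences together with operator monotonicity of $t\mapsto t^s$ then finish the argument, and the perturbation $V+\epsilon I$ correctly handles singular $V$. The stray ``and any $t>0$'' in your first observation is harmless. Your sketch of L\"owner--Heinz is also complete: it uses $t^s=\frac{\sin(s\pi)}{\pi}\int_0^\infty \frac{t}{t+\mu}\mu^{s-1}\,d\mu$ and the fact that inversion reverses order on positive definite matrices.

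The cost of your route is that it rests on operator monotonicity, a genuinely nontrivial theorem. If you want a proof using only the spectral theorem, there is a classical alternative. The set of $s\in[0,1]$ where the inequality holds contains $0$ and $1$ and is relatively closed in $(0,1]$ by continuity. It is also midpoint-convex: with $m=\frac{s+t}{2}$ and $\rho$ the spectral radius,
\[ |U^mV^m|_{\op}^2=\rho\big(V^mU^sU^tV^m\big)=\rho\big(U^tV^{s+t}U^s\big)\le |U^tV^t|_{\op}\,|V^sU^s|_{\op}, \]
using $\rho(XY)=\rho(YX)$ and $|V^sU^s|_{\op}=|U^sV^s|_{\op}$. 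Density of the dyadic rationals then gives the lemma.
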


A matrix weight is a measurable function $W\colon\R^n\to\mathcal S_d$ that is invertible almost everywhere, and whose entries are finite almost everywhere. The matrix-weighted $L^p$ space $L^p(W)$ consists of measurable functions $\vecf\colon\R^n\to\R^d$ such that
\[
    \|W\vecf\|_{L^p(\R^n,\R^d)}<\infty.
\]

\begin{remark}
    We note that our definition of the matrix-weighted space $L^p(W)$ is different from the original definitions in, for instance~\cite{hytonen-perez-treil-volbergP, MR1428988, MR1428818, MR2015733, MR1813604}, where the norm was written as $\|W^{1/p}\vecf\|_{L^p(\R^n,\R^d)}$.  We have instead chosen to follow the approach taken in~\cite{mb-2026}.  One reason for this is that it preserves the left-openness of the matrix $\A_p$ classes, which is not the case for the original definition.  (See~\cite{Bow, dcu-mp-2025} for details.)
\end{remark}

Given $p,\,q$,  $1<p\le q<\infty$, we say a matrix weight $W$ is in the class $\A_{p,q}$, denoted by $W\in \A_{p,q}$, if 
\[
[W]_{\A_{p,q}} = \sup_{Q} \left( \avgint_Q \left( \avgint_{Q} |W(x) W^{-1}(y)|^{p'}_{\op} \,dy \right)^{\frac{q}{p'}} \,dx \right)^{\frac{1}{q}}  < \infty.
\]
We refer to the quantity $[W]_{\A_{p,q}}$ as the $\A_{p,q}$ characteristic of $W$.
We can also characterize matrix  $\A_{p,q}$ weights using the so-called reducing operators.  For a proof of the next result, 
see~\cite[Section~2]{MR4030471}. 

\begin{prop} \label{prop:frac-reducing}
   Given $0<\alpha<n$ and $1\leq p<\frac{n}{\alpha}$,
define $q$ by $\frac{1}{p}-\frac{1}{q}=\frac{\alpha}{n}$.   Let $W\in
\A_{p,q}$.  Then for
every cube $Q$ there exist constant matrices $\W_Q^q$ and
$\overline{\W}_Q^{p'}$ such that for every $\vecv\in \R^d$,
\[ |\W_Q^q v|\approx \bigg(\avgint_Q |W(x)\vecv|^q\,dx\bigg)^{\frac{1}{q}},
  \quad
  |\overline{\W}_Q^{p'}| \approx
  \bigg(\avgint_Q |W^{-1}(x)\vecv|^{p'}\,dx\bigg)^{\frac{1}{p'}}. \]
Moreover,
\[ [W]_{\A_{p,q}}^R = \sup_Q |\W_Q^q \overline{\W}_Q^{p'}|_{\op}
  \approx [W]_{\mathcal{A}_{p,q}}. \]
In each case, the implicit constants depend only $d$, $p$, and
$\alpha$.  
\end{prop}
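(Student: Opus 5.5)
The plan is to construct the reducing operators from the John ellipsoid of the unit ball of a suitable norm on $\R^d$, and then compare the $\A_{p,q}$ characteristic with the operator norm of the product using Lemma~\ref{opNorm:equiv}.

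\emph{Construction.} Fix a cube $Q$ and set
\[
\rho(\vecv)=\bigg(\avgint_Q |W(x)\vecv|^q\,dx\bigg)^{\frac1q}.
\]
Since $W$ is invertible a.e., $\rho$ is a norm on $\R^d$. It is finite because $W\in\A_{p,q}$ forces $|W|_{\op}\in L^q_{\mathrm{loc}}$: Minkowski's inequality gives $|W(x)|_{\op}\le |W(x)W^{-1}(y)|_{\op}|W(y)|_{\op}$, and averaging in $y$ bounds $|W|_{\op}$ in $L^q(Q)$. The unit ball of $\rho$ is a symmetric convex body, so John's theorem gives an ellipsoid $E$ with $E\subset\{\rho\le1\}\subset\sqrt d\,E$. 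Writing $E=\{\vecv: |A\vecv|\le1\}$ with $A\in\Sd$ positive definite, we get $|A\vecv|\le\rho(\vecv)\le\sqrt d\,|A\vecv|$ up to the order of the constants. Set $\W_Q^q=A$. The same construction with $W^{-1}$ and exponent $p'$ produces $\overline{\W}_Q^{p'}$, which is finite by the symmetric argument. The constants here depend only on $d$.

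\emph{Upper bound.} We show $|\W_Q^q\overline{\W}_Q^{p'}|_{\op}\lesssim[W]_{\A_{p,q}}$. By Lemma~\ref{SelfAdjointCommutes} this product has the same norm as $\overline{\W}_Q^{p'}\W_Q^q$. Let $\{\vece_i\}$ be the standard basis. Lemma~\ref{opNorm:equiv}, applied to the rows, gives
\[
|\W_Q^q\overline{\W}_Q^{p'}|_{\op}\approx\sum_i|\overline{\W}_Q^{p'}\W_Q^q\vece_i|
\approx\sum_i\bigg(\avgint_Q|W^{-1}(y)\W_Q^q\vece_i|^{p'}dy\bigg)^{1/p'}
\lesssim\bigg(\avgint_Q|W^{-1}(y)\W_Q^q|_{\op}^{p'}dy\bigg)^{1/p'}.
\]
Next, $|W^{-1}(y)\W_Q^q|_{\op}=|\W_Q^qW^{-1}(y)|_{\op}$ because both matrices are self-adjoint, and this is comparable to $\sum_i|\W_Q^qW^{-1}(y)\vece_i|\approx\sum_i\big(\avgint_Q|W(x)W^{-1}(y)\vece_i|^q dx\big)^{1/q}$. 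This is at most a constant times $\big(\avgint_Q|W(x)W^{-1}(y)|_{\op}^q dx\big)^{1/q}$. We then need to interchange the $L^q_x$ and $L^{p'}_y$ norms.

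\emph{The main obstacle.} The interchange goes the wrong way when $q>p'$: the definition of $[W]_{\A_{p,q}}$ takes the inner norm in $y$. To avoid this, I would start the same computation from the other side. Write
\[
|\W_Q^q\overline{\W}_Q^{p'}|_{\op}\approx\bigg(\avgint_Q|W(x)\overline{\W}_Q^{p'}|_{\op}^q\,dx\bigg)^{1/q},
\]
and then, for fixed $x$, use
\[
|W(x)\overline{\W}_Q^{p'}|_{\op}=|\overline{\W}_Q^{p'}W(x)|_{\op}\approx\sum_i\bigg(\avgint_Q|W^{-1}(y)W(x)\vece_i|^{p'}dy\bigg)^{1/p'}\lesssim\bigg(\avgint_Q|W(x)W^{-1}(y)|_{\op}^{p'}dy\bigg)^{1/p'}.
\]
Substituting this into the $x$-average gives exactly the expression defining $[W]_{\A_{p,q}}$, so no interchange is needed.

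\emph{Lower bound.} Reverse the chain. Expand $|W(x)W^{-1}(y)|_{\op}$ over the basis, $\approx\sum_i|W^{-1}(y)W(x)\vece_i|$, take the $L^{p'}_y$ average to obtain $\sum_i|\overline{\W}_Q^{p'}W(x)\vece_i|\approx|W(x)\overline{\W}_Q^{p'}|_{\op}$, and then take the $L^q_x$ average. This yields $[W]_{\A_{p,q}}\lesssim\sup_Q|\W_Q^q\overline{\W}_Q^{p'}|_{\op}$. The constants depend only on $d$, $p$, and $q$, and $q$ is determined by $p$ and $\alpha$.
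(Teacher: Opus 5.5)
Your proposal is correct and follows the standard argument the paper relies on; the paper itself defers the proof to \cite[Section~2]{MR4030471}. That argument builds the reducing operators from John ellipsoids of the unit balls of $\vecv\mapsto\|W\vecv\|_{\avgL^q(Q)}$ and $\vecv\mapsto\|W^{-1}\vecv\|_{\avgL^{p'}(Q)}$, then compares via Lemma~\ref{opNorm:equiv} with the $L^q_x$ norm outside and the $L^{p'}_y$ norm inside, so no Minkowski interchange is needed. Two small fixes: John's theorem gives $\rho(\vecv)\le|A\vecv|\le\sqrt d\,\rho(\vecv)$, and for local integrability the bound $|W(x)|_{\op}\le|W(x)W^{-1}(y)|_{\op}|W(y)|_{\op}$ (submultiplicativity, not Minkowski) should be averaged over a set of $y$ of positive measure on which $|W(y)|_{\op}$ is bounded, since averaging over all of $Q$ would presuppose $|W|_{\op}\in L^p(Q)$.
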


\begin{remark} \label{remark:Apq-dual}
    It follows from Proposition~\ref{prop:frac-reducing} and Lemma~\ref{SelfAdjointCommutes} that $[W]_{\A_{p,q}}^R=[W^{-1}]_{\A_{q',p'}}^R$, and so $W\in \A_{p,q}$ if and only if $W^{-1} \in \A_{q',p'}$.
\end{remark}

Recall that a weight $w$ satisfies the reverse H\"older inequality with exponent $s>1$ and a constant $C>1$ if
for every cube $Q$,
\[  \avgint_Q w^s\,dx \leq C\bigg(\avgint_Q w\,dx\bigg)^s.  
\]
Any weight $w$ in the scalar $A_p$ satisfies a  reverse H\"older inequality.  In fact, from~\cite[Theorem~2.3]{HPR2012SharpRH} and \cite[Prop.~2.2]{MR3092729} that we have that this inequality holds with $C=2$ and $1<s\leq \frac{1}{c(n)[w]_{A_p}-1}$.  Further, we have that if $w$ is in scalar $\A_{p,q}$, then by H\"older's inequality, $w^q \in A_q$ and $[w]_{\A_{p,q}}^q\geq[w^q]_{A_q}$.  We will use this fact in conjunction with the following result due to Isralowitz and the third author~\cite[Cor.~3.3]{MR4030471}.

\begin{lemma} \label{lemma:matrix-RHI}
If $W \in\A_{p,q}$, then for any vector $\vecv \in \R^d$, $|W\vecv|$ is in scalar $\A_{p,q}$ and $[|W\vecv|]_{\A_{p,q}} \leq c[W]_{\A_{p,q}}$.  Consequently, there exists a constant $\tau(n)$ such that $|W\vecv|^q$ satisfies the sharp reverse H\"older inequality
\[  \avgint_Q |W\vecv|^{sq}\,dx \leq 2\bigg(\avgint_Q |W\vecv|^q\,dx\bigg)^s, \quad 1<s\leq 1+\frac{1}{\tau(n)[W]_{\A_{p,q}}^q-1}.
\]
\end{lemma}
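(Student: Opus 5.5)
The statement to prove is Lemma~\ref{lemma:matrix-RHI}. The plan is to establish the scalar $\A_{p,q}$ bound for $w:=|W\vecv|$ directly from the reducing-operator characterization, and then import the sharp reverse H\"older inequality for scalar $A_q$ weights.

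\textbf{Step 1: the scalar bound.} Fix $\vecv\neq 0$ and a cube $Q$. By Proposition~\ref{prop:frac-reducing},
\[
\bigg(\avgint_Q w^q\,dx\bigg)^{1/q}\approx |\W_Q^q\vecv| .
\]
For the dual average, write $\vecv=W^{-1}(x)W(x)\vecv$. This gives the pointwise inequality
\[
|\vecv|\le |W^{-1}(x)\vecu_x|\,|W(x)\vecv|,\qquad \vecu_x:=\frac{W(x)\vecv}{|W(x)\vecv|}.
\]
The difficulty is that $\vecu_x$ depends on $x$. To handle this, use self-adjointness and duality:
\[
|\vecv|^2=\langle W(x)\vecv,\,W^{-1}(x)\vecv\rangle .
\]
A cleaner route is to apply Lemma~\ref{opNorm:equiv}, bounding $|W^{-1}(x)\vecu|\le |W^{-1}(x)|_{\op}\lesssim \sum_i |W^{-1}(x)\vece_i|$. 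This yields
\[
w(x)^{-1}\le \frac{|W^{-1}(x)|_{\op}}{|\vecv|}.
\]
That bound loses the direction, so I would refine it as follows. Insert the reducing operator:
\[
|\vecv| = |\langle \W_Q^q\vecv,\,(\W_Q^q)^{-1}\vecv\rangle|\le |\W_Q^q\vecv|\,|(\W_Q^q)^{-1}\vecv| .
\]
Then I would show $w(x)^{-1}\le |W^{-1}(x)(\W_Q^q)^{*}\vecu|$-type bounds with $\vecu$ running over an orthonormal basis. Precisely,
\[
\frac{|\vecv|}{w(x)} \le |W^{-1}(x)\W_Q^q|_{\op}\,\frac{|\vecv|}{|\W_Q^q\vecv|}\cdot\ldots
\]
Simplest concrete version: normalize so that $|\W_Q^q\vecv|=1$. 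Then
\[
|\vecv|\cdot\ldots \quad\text{becomes}\quad w(x)^{-1}\le |(W(x)\vecv)^{-1}| .
\]
Since $|\langle \W_Q^q\vecv,\, (\W_Q^q)^{-1}W(x)\ldots\rangle|$ is getting circular, I commit to the following identity for $\vecv\neq0$:
\[
1=\frac{\langle W(x)\vecv,\,W^{-1}(x)\W_Q^q\W_Q^q\vecv\rangle}{|\W_Q^q\vecv|^2}\cdot\frac{|\W_Q^q\vecv|^2}{\langle \vecv,\W_Q^q\W_Q^q\vecv\rangle} .
\]
Here the second factor equals $1$. Cauchy--Schwarz then gives
\[
w(x)^{-1}\le \frac{|W^{-1}(x)\W_Q^q|_{\op}}{|\W_Q^q\vecv|}.
\]
Taking $L^{p'}$ averages and using Lemma~\ref{opNorm:equiv} together with Proposition~\ref{prop:frac-reducing} applied to the vectors $\W_Q^q\vece_i$:
\[
\bigg(\avgint_Q w^{-p'}\bigg)^{1/p'}\lesssim \frac{|\overline{\W}_Q^{p'}\W_Q^q|_{\op}}{|\W_Q^q\vecv|} .
\]
Multiplying by the $q$-average of $w$ and applying Lemma~\ref{SelfAdjointCommutes} gives
\[
[w]_{\A_{p,q}}\lesssim \sup_Q|\W_Q^q\overline{\W}_Q^{p'}|_{\op}\approx [W]_{\A_{p,q}} .
\]

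\textbf{Step 2: reverse H\"older.} By H\"older's inequality, as noted before the lemma, $w^q\in A_q$ with $[w^q]_{A_q}\le [w]_{\A_{p,q}}^q\le c[W]_{\A_{p,q}}^q$. The cited sharp reverse H\"older inequality~\cite{HPR2012SharpRH,MR3092729} then gives constant $2$ for all
\[
1<s\le 1+\frac{1}{c(n)[w^q]_{A_q}-1}.
\]
Since the right-hand side decreases as $[w^q]_{A_q}$ increases, it is bounded below by $1+\bigl(\tau(n)[W]^q_{\A_{p,q}}-1\bigr)^{-1}$ with $\tau=c(n)c^q$. One subtlety is that $c$ depends on $p$ and $d$, which I would absorb into $\tau$.

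\textbf{Main obstacle.} The main obstacle is Step 1's pointwise bound on $w^{-1}$ that keeps the reducing operator aligned with $\vecv$. The Cauchy--Schwarz identity above is the device I would try first.
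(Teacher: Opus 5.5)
Your proof is correct, but Step 1 takes a different route from the paper. The paper does not prove the scalar bound at all: it cites \cite[Cor.~3.3]{MR4030471} for $[|W\vecv|]_{\A_{p,q}}\le c[W]_{\A_{p,q}}$. It then, exactly as in your Step 2, combines $[w^q]_{A_q}\le [w]_{\A_{p,q}}^q$ with the scalar sharp reverse H\"older inequality.

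Your reducing-operator argument is valid once the false starts are removed. The lines containing $\ldots$ and $|(W(x)\vecv)^{-1}|$ are not meaningful and should be deleted, and your ``identity'' is just $1=1\cdot 1$. The real content is the following inequality, which uses that $W^{-1}(x)$ and $\W_Q^q$ are self-adjoint: $|\W_Q^q\vecv|^2=\langle W(x)\vecv,W^{-1}(x)(\W_Q^q)^2\vecv\rangle\le |W(x)\vecv|\,|W^{-1}(x)\W_Q^q|_{\op}\,|\W_Q^q\vecv|$. After that, Lemma~\ref{opNorm:equiv}, Proposition~\ref{prop:frac-reducing} and Lemma~\ref{SelfAdjointCommutes} finish the job.

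The price of this route is in the constants. Those equivalences make $c$ depend on $d,p,q$, so you obtain $\tau=\tau(n,d,p,q)$ rather than $\tau(n)$ as stated. This is harmless for how the lemma is used later, but it is not literally the statement.

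With the two-variable definition of $[W]_{\A_{p,q}}$ used in this paper, there is in fact no ``main obstacle'' at all. For $w=|W\vecv|$ with $\vecv\neq 0$, one has $w(x)=|W(x)W^{-1}(y)W(y)\vecv|\le |W(x)W^{-1}(y)|_{\op}\,w(y)$. Hence
\[
\Big(\avgint_Q w^q\,dx\Big)^{\frac1q}\Big(\avgint_Q w^{-p'}\,dy\Big)^{\frac1{p'}}
=\bigg(\avgint_Q\Big(\avgint_Q\Big(\frac{w(x)}{w(y)}\Big)^{p'}dy\Big)^{\frac{q}{p'}}dx\bigg)^{\frac1q}\le [W]_{\A_{p,q}},
\]
so $c=1$ with no reducing operators. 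Your Step 2 then gives $\tau$ depending only on $n$, as the lemma states.
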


\begin{remark}
    We note that in comparing our results to those in~\cite{MR4030471}, the reader must take into account the different definitions of scalar and matrix $A_{p,q}$ used there.
\end{remark}

\subsection*{Convex bodies}
In this section we give some  basic definitions and results about convex sets that appear in convex body domination.  For a more detailed discussion, see~\cite{mb-2026}.   Recall that a set $A \in \R^d$ is symmetric if $x\in A$ if and only if $-x\in A$. Let $\K(\R^d)$ denote the set of all closed, bounded, convex, and symmetric sets in $\R^d$.  Given a set $A\in \K(\R^d)$ and a vector $\vecv \in \R^d$, we define the inner product
\[ A \cdot \vecv := \sup\{ \vecu\cdot \vecv : \vecu \in A \}.  \]
Since $A$ is symmetric, we always have that $A \cdot \vecv \geq 0$.  
Given any two sets $A,\,B \in \K(\R^d)$, define their Minkowski sum to be the set
\[ A+ B = \{ u+v : u \in A, v\in B \}.\]
Note that $A+B$ is again convex.  Given a collection of convex sets $\{A_i\}_{i=1}^\infty$, we can define the infinite Minkowski sum
\[  \sum_{i=1}^\infty A_i.  \]
It can be shown with suitable assumptions on the sets $A_i$ that this sum converges in Hausdorff metric to a convex set.  (See also~\cite[Lemma~2.5]{MR3689742}.)

Given a vector valued function $\vecf\in L^r$, we define the set-valued $L^r$ average, $1\leq r<\infty$, by 
\[ \llangle \vecf \rrangle_{\avgL^r(Q)} 
=\bigg\{ \avgint_Q k\vecf\,dx : \|k\|_{L^{r'}(Q,\R)}\leq 1 \bigg\}. \]
Denote the collection of functions $k$ that satisfy the selection condition $\|k\|_{L^{r'}(Q,\R)}\leq 1$ by $\bBo(r',Q)$.  Since this set is closed, bounded, symmetric and convex,  it is straightforward to prove that if $\vecf\in L^r(Q,\R^d)$, then $\llangle \vecf \rrangle_{\avgL^r(Q)}\in \mathcal{K}(\R^d)$.  See~\cite{MR4245601}. 

The following is the classical John ellipsoid theorem.
\begin{theorem} \label{thm:john-ellipsoid}
    Let $K\in \K(\R^d)$. Then there exists a closed ellipsoid $\mathcal{E}$ centered at the origin such that $\mathcal E\subset K\subset \sqrt{d}\mathcal E$.
\end{theorem}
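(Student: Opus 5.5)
The statement is the John ellipsoid theorem for $K\in\K(\R^d)$. I will argue via the maximal-volume ellipsoid contained in $K$.

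First, reduce to the case where $K$ has nonempty interior. If $K$ is contained in a proper subspace $H$, let $H$ be the linear span of $K$. Because $K$ is symmetric and convex, $K$ has nonempty relative interior in $H$, and we run the argument inside $H$, obtaining a degenerate ellipsoid in $\R^d$. (If $K=\{0\}$, take $\mathcal E=\{0\}$.) So assume $K$ is a convex body that is symmetric about $0$. Among closed ellipsoids $\mathcal E=A\bBo$ with $A\in\Sd$ and $A\bBo\subset K$, choose one maximizing $\det A$. Existence follows by compactness: the admissible $A$ form a bounded set, since $A\bBo\subset K\subset R\bBo$ gives $|A|_{\op}\le R$, and this set is closed. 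Also $\det A>0$, since $K$ contains a small ball. Restricting to ellipsoids centered at $0$ loses nothing here, because symmetry together with the concavity of $\log\det$ allows us to symmetrize any inscribed ellipsoid without decreasing its volume.

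Next, apply the linear map $A^{-1}$ to reduce to the case where $\bBo$ is the maximal-volume origin-centered ellipsoid in $K$. It then suffices to show $K\subset\sqrt d\,\bBo$.

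Suppose instead that some $x\in K$ has $|x|=t>\sqrt d$. Rotate so that $x=t\vece_1$. By symmetry and convexity, $K$ contains the convex hull of $\bBo$ and $\pm t\vece_1$. Consider the ellipsoids
\[
\mathcal E_{a,b}=\Big\{y: \tfrac{y_1^2}{a^2}+\tfrac{|y'|^2}{b^2}\le 1\Big\},\qquad y=(y_1,y')\in\R\times\R^{d-1}.
\]
By symmetry in $y'$ and $y_1$, $\mathcal E_{a,b}$ lies in this hull exactly when it lies in the two-dimensional "diamond-capped disc". The tangent line from $(t,0)$ to the unit circle touches it at $(1/t,\sqrt{1-1/t^2})$. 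A support-function computation shows that $\mathcal E_{a,b}$ lies under this tangent line precisely when
\[
a^2+b^2(t^2-1)\le t^2 .
\]
Take $a=1+\varepsilon$ and $b^2=\big(t^2-(1+\varepsilon)^2\big)/(t^2-1)$. Then
\[
\log(ab^{d-1})=\varepsilon-\frac{(d-1)\varepsilon}{t^2-1}+O(\varepsilon^2),
\]
and this is positive for small $\varepsilon>0$ when $t^2-1>d-1$, that is, when $t^2>d$. This contradicts maximality, so $K\subset\sqrt d\,\bBo$.

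The main obstacle is verifying the tangent-line containment cleanly. I would check it through the support function of $\mathcal E_{a,b}$ in the direction normal to the tangent line. Portions of $\mathcal E_{a,b}$ with $|y_1|\le 1/t$ also need to stay inside the unit ball part of the hull, and this holds because $b\le 1$.
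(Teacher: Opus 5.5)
The paper does not prove this statement; it quotes it as the classical John ellipsoid theorem. Your argument therefore supplies what the paper takes from the literature. It is the standard maximal-volume proof for symmetric bodies, and it is correct:
\begin{itemize}
\item Compactness gives a maximizer of $\det A$ among origin-centered ellipsoids $A\bBo\subset K$.
\item The normalization $A=I$ and a rotation are harmless.
\item Taking $a=1+\varepsilon$ and $b^2=(t^2-a^2)/(t^2-1)$ gives $\log(ab^{d-1})=\varepsilon\bigl(1-\frac{d-1}{t^2-1}\bigr)+O(\varepsilon^2)>0$ whenever $t^2>d$, which contradicts maximality.
\item The degenerate case is fine: it yields $K\subset\sqrt{\dim H}\,\mathcal E\subset\sqrt d\,\mathcal E$ with a singular $A$, which the paper's convention allowing $\sigma_i=0$ permits.
\end{itemize}

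One step is justified too loosely. You say the part of $\mathcal E_{a,b}$ with $|y_1|\le 1/t$ stays in the unit ball ``because $b\le 1$''. That is not enough on its own: for $b=1$ and $a>1$, the points $(y_1,\sqrt{1-y_1^2/a^2})$ with $0<|y_1|\le 1/t$ lie outside the unit disc. The claim is true only because the tangent-line condition also holds.

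The clean check uses support functions directly in $\R^d$, with no planar reduction. Set $L=\operatorname{conv}(\bBo\cup\{\pm t\vece_1\})$ and take a unit vector $u=(u_1,u')$. Then $h_{\mathcal E_{a,b}}(u)^2=a^2u_1^2+b^2|u'|^2$ and $h_L(u)=\max\{1,t|u_1|\}$. With $c=u_1^2$, the inclusion $\mathcal E_{a,b}\subset L$ is therefore equivalent to $a^2c+b^2(1-c)\le \max\{1,t^2c\}$ for all $c\in[0,1]$.

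The difference of the two sides is piecewise linear with a single kink at $c=1/t^2$. So it suffices to check $c=0$, $c=1/t^2$ and $c=1$, that is, $b\le 1$, $a^2+b^2(t^2-1)\le t^2$, and $a\le t$. When $a\ge 1$, the middle condition implies the other two, so your choice of $b$ makes the inclusion exact.

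Your remark about symmetrizing off-center ellipsoids can be dropped. You maximize only over origin-centered ellipsoids, and your competitor $\mathcal E_{a,b}$ is origin-centered, so maximality within that class is all the argument uses.
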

We call the ellipsoid in Theorem~\ref{thm:john-ellipsoid} the John ellipsoid. By the definition of an ellipsoid we can write $\mathcal{E}=A\overline{\mathbf{B}}$, where $A\in \Md$. It follows from the polar decomposition of $A$  that we can replace $A$ with a matrix $\tilde A \in \Ss_d$.  Moreover, we have that
\begin{equation} \label{eqn:john-ellipsoid}
    \mathcal{E}=\left\{\vecx\in\R^d\,\colon\,\sum_{i=1}^d\left(\frac{|\mathbf x\cdot\vecv_i|}{\sigma_i}\right)^2\leq1\right\},
\end{equation}
where $\{\vecv_i\}_{i=1}^d$ are the eigenvectors of $\tilde A$ and $\{\sigma_i\}_{i=1}^d$ are the corresponding eigenvalues. If for some $i$, $\sigma_i=0$, we interpret this by setting
\[
    \frac{|\vecx\cdot \vecv_i|}{0}\coloneqq \begin{cases}
        0,  &\text{if } |\vecx\cdot \vecv_i|=0,\\
        \infty, &\text{if } |\vecx\cdot \vecv_i|>0.
    \end{cases}
\]
We refer to the orthonormal vectors $\{\vecv_i\}_{i=1}^d$ as the principal axes of the ellipsoid $\mathcal{E}$.

\subsection*{Young functions}
We recall some basic definitions and results from the theory of Orlicz spaces.  For further details and proofs, see~\cite{Rao-Ren}. Let $\Phi\colon[0,\infty)\to[0,\infty)$ be a Young function, i.e., an increasing convex function such that $\Phi(0)=0$ and $\Phi(t)/t\to\infty$ as $t\to\infty$.
The localized Orlicz norm is
\[
    \|f\|_{\avgL^\Phi(Q)}\coloneqq \inf\left\{\lambda>0\,\colon\,\avgint_Q\Phi\left(\frac{|f|}{\lambda}\right)\leq1\right\}.
\]
This norm has the following rescaling property:  given $r\geq 1$, define $\Phi_r(t)=\Phi(t^r)$.  Then 
\begin{equation} \label{eqn:rescaling}
\||f|^r\|_{\avgL^\Phi(Q)}^{\frac{1}{r}} = \|f\|_{\avgL^{\Phi_r}(Q)}. 
\end{equation}

The associate function of a Young function $\Phi$ is defined by 
\[
    \bar\Phi(t)\coloneqq \sup\{st-\Phi(s)\,\colon\,s>0\},
\]
and the pair $(\Phi,\bar\Phi)$ satisfies the generalized H\"older inequality
\begin{equation}\label{eq:orliczholder}
    \avgint_Q fg\leq 2\|f\|_{\avgL^\Phi(Q)}\|g\|_{\avgL^{\bar\Phi(Q)}}.
\end{equation}
Given $1<p,q<\infty$ say $\Phi$ satisfies the $\mathcal{B}_{p,q}$ condition, denoted by $\Phi\in \mathcal B_{p,q}$, if 
\begin{equation} \label{eqn:Bpq-cond}
    B_{p,q}(\Phi)\coloneqq\left(\int_1^\infty\frac{\Phi(t)^\frac{q}{p}}{t^q}\frac{dt}{t}\right)^\frac{1}{q}<\infty.
\end{equation}
For brevity, we set  $\mathcal B_p\coloneqq \mathcal B_{p,p}$. By~\cite[Theorem~3.3]{DCU-KMfrac2}, if $\Phi\in \mathcal B_{p,q}$, then the fractional maximal Orlicz operator
\[
    M_{\alpha,\Phi}f(x)\coloneqq \sup_{Q\ni x}|Q|^\frac{\alpha}{n}\|f\|_{\avgL^\Phi(Q)}
\]
satisfies
\begin{equation}\label{Eq:FracOrliczMaximalEst}
    \|M_{\alpha,\Phi}f\|_{L^q(\R^n)}\lesssim B_{p,q}(\Phi)\|f\|_{L^p(\R^n)},
\end{equation}
where the implicit constants depend on $n,\,p,\,q,\,\alpha,\, \Phi$.

\section{Off-diagonal convex body domination}
\label{section:bootstrapping}

In this section we prove our main convex body domination theorem, Theorem~\ref{thm:pointwiseCBDfromweaktypebounds}, and prove Theorem~\ref{thm:I-alpha-sparse-intro} as a consequence.  We also show how it can be used to bound more general off-diagonal operators.

\subsection*{The bootstrapping argument}
We first show that scalar-valued sparse bounds for off-diagonal operators can be lifted to prove convex body domination for the same operators applied to vector-valued functions. Our results generalize the form bounds in~\cite{hyt2024remarksonCBD} and our proofs are adapted from the ones found there.  While they are similar, we give the details to show the changes required.

We start with a lemma that gives a useful condition for identifying elements of $\llangle \vecf\rrangle_{\avgL^{r}(Q)}$.
\begin{lemma}\label{lem:convexbodytest}
    Let $Q$ be a cube, and let $\{\vecv_i\}_{i=1}^d$ be the principal axes of the John ellipsoid $\mathcal{E}$ of $\llangle \vecf\rrangle_{\avgL^r(Q)}$. If $|\mathbf x\cdot\vecv_i|\leq \frac{1}{d}\|\vecf\cdot\vecv_i\|_{\avgL^r(Q)}$ for every $i=1,\dots,d,$ then $\vecx\in\llangle \vecf\rrangle_{\avgL^r(Q)}$.
\end{lemma}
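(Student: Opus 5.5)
The plan is to show first that the John ellipsoid $\mathcal{E}$ of $K:=\llangle \vecf\rrangle_{\avgL^r(Q)}$ has semi-axes comparable to the scalar norms $\|\vecf\cdot\vecv_i\|_{\avgL^r(Q)}$, and then to use the ellipsoid description \eqref{eqn:john-ellipsoid} to place $\vecx$ inside $\mathcal{E}\subset K$.

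\textbf{Step 1: support function of $K$.} For any $\vecv\in\R^d$ we have
\[ K\cdot \vecv = \sup_{k\in\bBo(r',Q)} \avgint_Q k\,(\vecf\cdot\vecv)\,dx = \|\vecf\cdot\vecv\|_{\avgL^r(Q)}. \]
This is $L^r$--$L^{r'}$ duality, with the selection functions normalized as in the definition of $\llangle\cdot\rrangle$. The normalization has to be checked against the averaged definition. If the constraint is really $\|k\|_{\avgL^{r'}(Q)}\le 1$, the identity is exact. Otherwise a harmless power of $|Q|$ appears on both sides and can be carried through. The case $r=1$, where $r'=\infty$, is handled by taking $k=\operatorname{sgn}(\vecf\cdot\vecv)$.

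\textbf{Step 2: compare with the semi-axes.} Write $\mathcal{E}$ as in \eqref{eqn:john-ellipsoid}, with principal axes $\vecv_i$ and semi-axis lengths $\sigma_i$. The support function of $\mathcal{E}$ in the direction $\vecv_i$ is exactly $\sigma_i$. Theorem~\ref{thm:john-ellipsoid} gives $\mathcal{E}\subset K\subset\sqrt d\,\mathcal{E}$. Taking support functions in the direction $\vecv_i$ and using Step 1 yields
\[ \sigma_i \le \|\vecf\cdot\vecv_i\|_{\avgL^r(Q)} \le \sqrt d\,\sigma_i . \]
Only the right-hand inequality is needed. Note that if $\sigma_i=0$, then $\vecf\cdot\vecv_i=0$ a.e., so the hypothesis forces $\vecx\cdot\vecv_i=0$. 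This is consistent with the convention $0/0=0$ in \eqref{eqn:john-ellipsoid}.

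\textbf{Step 3: conclude.} The hypothesis gives, for each $i$,
\[ |\vecx\cdot\vecv_i| \le \frac{1}{d}\|\vecf\cdot\vecv_i\|_{\avgL^r(Q)} \le \frac{\sqrt d}{d}\,\sigma_i = \frac{\sigma_i}{\sqrt d}. \]
Summing the squares,
\[ \sum_{i=1}^d \left(\frac{|\vecx\cdot\vecv_i|}{\sigma_i}\right)^2 \le d\cdot\frac{1}{d} = 1 . \]
Hence $\vecx\in\mathcal{E}\subset K$ by \eqref{eqn:john-ellipsoid}.

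The main point to get right is Step 1. The support function identity must hold with exactly the normalization used in $\bBo(r',Q)$; otherwise the constant $\frac1d$ has to be adjusted. Everything after that is routine.
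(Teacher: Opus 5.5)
Your proof is correct and is essentially the paper's argument. Both show $\vecx\in\mathcal E$ through \eqref{eqn:john-ellipsoid}, using the duality identity $\sup_{\phi\in\bBo(r',Q)}|\avgint_Q(\vecf\cdot\vecv_i)\phi\,dx|=\|\vecf\cdot\vecv_i\|_{\avgL^r(Q)}$ together with $\llangle\vecf\rrangle_{\avgL^r(Q)}\subset\sqrt d\,\mathcal E$. Your support-function bound $\|\vecf\cdot\vecv_i\|_{\avgL^r(Q)}\le\sqrt d\,\sigma_i$ is just a cleaner packaging of the paper's sup/sum interchange.

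One small correction on normalization. The paper's proof also relies on the averaged constraint $\|k\|_{\avgL^{r'}(Q)}\le 1$, and that is what makes the lemma true as stated. Under the unnormalized constraint, the factor $|Q|^{-1/r'}$ would rescale the body but not the hypothesis, so it would not cancel "on both sides" as you suggest.
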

\begin{proof}
    It suffices to show that $\vecx \in\mathcal{E}$, which by \eqref{eqn:john-ellipsoid} is equivalent to showing that
    \[
        \sum_{i=1}^d\left(\frac{|\mathbf x\cdot\vecv_i|}{\sigma_i}\right)^2\leq1,
    \]
    where $\sigma_i$ are the eigenvalues of the matrix that defines $\mathcal{E}$.
    By our hypothesis on $\vecx$ and duality, we have 
    \begin{multline*}
        \sum_{i=1}^d\left(\frac{|\mathbf x\cdot\vecv_i|}{\sigma_i}\right)^2
        \leq\frac{1}{d^2} \sum_{i=1}^d\left(\frac{\|\vecf\cdot\vecv_i\|_{\avgL^r(Q)}}{\sigma_i}\right)^2
        =\frac{1}{d^2}\sum_{i=1}^d\sup_{\phi\in \bBo(r',Q)}\left(\frac{|\avgint_{Q}(\vecf\cdot\vecv_i)\,\phi\,dx|}{\sigma_i}\right)^2\\
        \leq\sup_{\phi\in\bBo(r',Q)}\frac{1}{d}\sum_{i=1}^d\left(\frac{|\avgint_{Q}(\vecf\cdot\vecv_i)\,\phi\,dx|}{\sigma_i}\right)^2
        =\sup_{\phi\in\bBo(r',Q)}\frac{1}{d}\sum_{i=1}^d\left(\frac{|(\avgint_{Q}\vecf\,\phi \,dx)\cdot\vecv_i|}{\sigma_i}\right)^2.
    \end{multline*}
   By the definition we have that for all such $\phi$,  $\avgint_{Q}\vecf\,\phi\,dx \in \llangle \vecf\rrangle_{\avgL^r(Q)}\subset \sqrt d \mathcal{E}$; thus, $\frac{1}{\sqrt d}\avgint_{Q}\vecf\,\phi\,dx\in \mathcal{E}$, which means that
    \[
        \frac{1}{d}\sum_{i=1}^d\left(\frac{|(\int_{\R^n}\vecf\,\phi)\cdot\vecv_i|}{\sigma_i}\right)^2\leq1.
    \]
    If we combine these estimates we get the desired inclusion.
\end{proof}

\begin{remark}
    Lemma~\ref{lem:convexbodytest} is also true in a general Banach-space setting developed in~\cite{hyt2024remarksonCBD}. Let $X(Q)$ be a Banach space. The convex bodies are then defined by
    \[
        \llangle \vecf\rrangle_{X(Q)}\coloneqq\{\langle \vecf,\phi\rangle_{(X(Q),X^*(Q))}\,\colon \phi\in \overline{\mathbf{B}}(X^*(Q))\},
    \]
    where $\langle \vecf,\phi\rangle_{(X(Q),X^*(Q))}\coloneqq(\langle f_1,\phi\rangle_{(X(Q),X^*(Q))},\dots,\langle f_d,\phi\rangle_{(X(Q),X^*(Q))})\in\R^d$. In this setting, the proof of Lemma \ref{lem:convexbodytest} works verbatim, if we replace $\avgL^r(Q)$ with $X(Q)$ and $\avgint_Q\vecf \phi\,dx$ with $\langle \vecf,\phi\rangle_{(X(Q),X^*(Q))}$.  In~\cite{hyt2024remarksonCBD} form domination is proved in this general setting.  It is an open problem to show that the pointwise domination we prove can be done in this generality.
\end{remark}

The following proposition is a  generalization of \cite[Lemma 3.2]{MR3689742}.
\begin{prop}\label{prop:pointwisebootstrap}
    Let $Q_0$ be a dyadic cube and let $T_Q$ be linear operators associated with a cube $Q$. Suppose that for all $f\in \avgL^r(3Q_0)$ there exists a disjoint collection $\mathcal{G}\subset\mathcal{D}(Q_0)$ with the following properties:
    \begin{enumerate}
        \item $\sum_{Q\in\mathcal{G}}|Q|\leq \varepsilon|Q_0|$;
        \item if $\bar{\mathcal{G}}\subset\mathcal{D}(Q_0)$ is a disjoint collection such that $\bigcup_{\bar Q\in\bar{\mathcal{G}}}\bar Q\supset \bigcup_{Q\in\mathcal{G}}Q$ and any $Q\in \mathcal{G}$ is covered by some $\bar Q\in
        \bar{\mathcal{G}}$, then for almost every $x$, 
        \[
            |T_{Q_0}f(x)-\sum_{\bar Q\in\bar{\mathcal{G}}}T_{\bar Q}f(x)|\leq C_0 \|f\|_{\avgL^r(3Q_0)}.
        \]
    \end{enumerate}
    Then for all $\vecf\in\avgL^r(Q,\R^d)$ there exists a disjoint collection $\hat{\mathcal{G}}$ such that $\sum_{\hat Q\in\hat{\mathcal{G}}} |\hat Q|\leq \varepsilon d|Q_0|$, and for almost every $x$, 
        \[
            T_{Q_0}\vecf(x)-\sum_{\hat Q\in\hat{\mathcal{G}}}T_{\hat Q}\vecf(x)\in C_0d \llangle \vecf\rrangle_{\avgL^r(3Q_0)}.
        \]
\end{prop}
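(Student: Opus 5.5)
The plan is to reduce the vector-valued statement to $d$ scalar applications of the hypothesis, one for each principal axis of the John ellipsoid, and then to recombine the scalar estimates using Lemma~\ref{lem:convexbodytest}. Fix $\vecf$ and let $\mathcal E$ be the John ellipsoid of $\llangle \vecf\rrangle_{\avgL^r(3Q_0)}$ given by Theorem~\ref{thm:john-ellipsoid}, with principal axes $\{\vecv_i\}_{i=1}^d$. For each $i$, set $f_i\coloneqq \vecf\cdot\vecv_i$, which is a scalar function in $\avgL^r(3Q_0)$. The hypothesis then produces a disjoint collection $\mathcal G_i\subset\mathcal D(Q_0)$ with $\sum_{Q\in\mathcal G_i}|Q|\le\varepsilon|Q_0|$.

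We define $\hat{\mathcal G}$ to be the collection of maximal cubes in $\bigcup_{i=1}^d\mathcal G_i$. Since all of these are dyadic subcubes of $Q_0$, any two of them are either nested or disjoint, so $\hat{\mathcal G}$ is a disjoint collection. Its union equals $\bigcup_i\bigcup_{Q\in\mathcal G_i}Q$, and every $Q\in\mathcal G_i$ is contained in some $\hat Q\in\hat{\mathcal G}$. The measure bound follows immediately:
\[
\sum_{\hat Q\in\hat{\mathcal G}}|\hat Q|\le\sum_{i=1}^d\sum_{Q\in\mathcal G_i}|Q|\le \varepsilon d|Q_0|.
\]
Thus, for every $i$, $\hat{\mathcal G}$ is an admissible $\bar{\mathcal G}$ in hypothesis (2) relative to $\mathcal G_i$. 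Applying (2) with $f=f_i$ gives, for almost every $x$,
\[
\Bigl|T_{Q_0}f_i(x)-\sum_{\hat Q\in\hat{\mathcal G}}T_{\hat Q}f_i(x)\Bigr|\le C_0\|f_i\|_{\avgL^r(3Q_0)}.
\]
We then discard the union of the $d$ exceptional null sets.

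Next we pass back to vectors. The operators act componentwise and linearly, so with $\vecx(x)\coloneqq T_{Q_0}\vecf(x)-\sum_{\hat Q}T_{\hat Q}\vecf(x)$ we have $\vecx(x)\cdot\vecv_i = T_{Q_0}f_i(x)-\sum_{\hat Q}T_{\hat Q}f_i(x)$. Interchanging the dot product with the operators is fine because $\vecv_i$ is a constant vector; the infinite sum over $\hat{\mathcal G}$ should be read in whatever pointwise sense hypothesis (2) presumes. Hence $|(\vecx(x)/(C_0d))\cdot\vecv_i|\le \frac1d\|\vecf\cdot\vecv_i\|_{\avgL^r(3Q_0)}$ for all $i$. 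Lemma~\ref{lem:convexbodytest}, applied on the cube $3Q_0$, then gives $\vecx(x)\in C_0d\llangle\vecf\rrangle_{\avgL^r(3Q_0)}$.

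The only delicate point is making sure that a single common collection $\hat{\mathcal G}$ is admissible for all $d$ scalar problems simultaneously. This is exactly why hypothesis (2) is stated for arbitrary coarser covers $\bar{\mathcal G}$ rather than for $\mathcal G$ itself. Since the maximal-cube construction handles this and the rest is bookkeeping, I do not expect a serious obstacle.
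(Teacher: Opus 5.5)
Your proposal is correct and follows the paper's proof step for step. You apply the scalar hypothesis to $\vecf\cdot\vecv_i$ along the principal axes of the John ellipsoid of $\llangle\vecf\rrangle_{\avgL^r(3Q_0)}$, take $\hat{\mathcal G}$ to be the maximal cubes of $\bigcup_i\mathcal G_i$ (admissible for every $i$ by hypothesis (2)), and recombine via linearity and Lemma~\ref{lem:convexbodytest}.
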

\begin{proof}
    By assumption, for each $i=1,\dots,d$ there exists a disjoint collection $\mathcal{G}_i\subset \mathcal{D}(Q_0)$ such that $\sum_{Q_i\in\mathcal{G}_i}|Q_i|\leq\varepsilon|Q_0|$ and 
    \begin{equation}\label{eq:bootscalarineq}
        |T_{Q_0}(\vecf\cdot\vecv_i)-\sum_{Q_i\in\mathcal{G}_i}T_{Q_i}(\vecf\cdot\vecv_i)|\leq C_0 \|\vecf\cdot\vecv_i\|_{\avgL^r(3Q_0)},
    \end{equation}
    where the $\{\vecv_i\}_{i=1}^d$ are as in Lemma \ref{lem:convexbodytest}.
    Let $\hat{\mathcal{G}}$  be the maximal disjoint  cubes contained in $\bigcup_{i=1}^d\mathcal{G}_i$. Then 
    \[
        \sum_{\hat Q\in\hat{\mathcal{G}}}|\hat Q|\leq\sum_{i=1}^d\sum_{Q_i\in\mathcal{G}_i}| Q_i|\leq\varepsilon d|Q_0|
    \]
    and \eqref{eq:bootscalarineq} holds for each $i$ with $\hat{\mathcal{G}}$ in place of $\mathcal{G}_i$. Thus, for each $i$ we have
    \begin{equation*}
        \Big|\Big(T_{Q_0}\vecf-\sum_{\hat Q\in\hat{\mathcal{G}}}T_{\hat Q}\vecf\,\Big)\cdot\vecv_i\Big|
        =|T_{Q_0}(\vecf\cdot\vecv_i)-\sum_{\hat Q\in\hat{\mathcal{G}}}T_{\hat Q}(\vecf\cdot\vecv_i)|
        \leq C_0 \|\vecf\cdot\vecv_i\|_{\avgL^r(3Q_0)}.
    \end{equation*}
    By Lemma \ref{lem:convexbodytest} we have that 
    \[
            \frac{1}{C_0d}\Big(T_{Q_0}\vecf-\sum_{\hat Q\in\hat{\mathcal{G}}}T_{\hat Q}\vecf\,\Big)\in \llangle \vecf\rrangle_{\avgL^r(3Q_0)};
    \]
    this completes the proof.
\end{proof}

\subsection*{Estimates from weak type bounds}
We now use Proposition~\ref{prop:pointwisebootstrap} to prove convex body domination.    
We will  prove a general theorem building on the approach of Lerner. Given an operator $T$, define the maximal operator
\[
    M_Tf(x)\coloneqq \sup_{Q\ni x}\esssup_{y\in Q}|T(\mathbbm 1_{\R^n\setminus3Q}f)(y)|.
\]
We refer to $M_T$ as the Lerner maximal operator associated to $T$. Our goal is to prove convex body domination for $T$ with the assumption that $T$ and $M_T$ are bounded maps, $L^r\to L^{s,\infty}$. We first show that the hypotheses of Proposition~\ref{prop:pointwisebootstrap} hold for these operators.

\begin{prop}\label{prop:scalarbounds}
    Fix $1 \leq r,\, s< \infty$.  Suppose $T$ is a linear operator such that $T$ and $M_T$ are bounded maps, $L^r\to L^{s,\infty}$. Given any dyadic cube $Q_0$, $f\in \avgL^r(3Q_0)$, and $0<\varepsilon<1$, there exists $\mathcal{G}\subset\mathcal{D}(Q_0)$ such that
    \[
        \sum_{Q\in\mathcal{G}}|Q|\leq \varepsilon|Q_0|,
    \]
    and if $\bar{\mathcal{G}}\subset\mathcal{D}(Q_0)$ is a disjoint collection such that $\bigcup_{\bar Q\in\bar{\mathcal{G}}}\bar Q\supset \bigcup_{Q\in\mathcal{G}}Q$ and any $Q\in \mathcal{G}$ is covered by some $\bar Q\in\bar{\mathcal{G}}$, then there exists a constant $C\coloneqq C(n,s,r,T)$ such that
    \[
        |T(\mathbbm 1_{3Q_0}f)\mathbbm 1_{Q_0}-\sum_{\bar Q\in\bar{\mathcal{G}}} T(\mathbbm 1_{3\bar Q}f)\mathbbm 1_{\bar Q}|\leq C \varepsilon^{-\frac{1}{s}}|Q_0|^\frac{s-r}{sr}\|f\|_{\avgL^r(3Q_0)}\mathbbm 1_{Q_0}
    \]
    almost everywhere.
\end{prop}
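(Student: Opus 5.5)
This follows Lerner's approach in \cite{Ler2016OnPointwise}. We build an exceptional set from the weak-type bounds for $T$ and $M_T$, take $\mathcal{G}$ to be the maximal dyadic cubes on which the exceptional set has large relative density, and then control the difference $T(\mathbbm 1_{3Q_0}f)\mathbbm 1_{Q_0}-\sum T(\mathbbm 1_{3\bar Q}f)\mathbbm 1_{\bar Q}$ pointwise by splitting it into the part outside $\bigcup\bar Q$ and the part inside.

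\emph{Construction of $\mathcal{G}$.} Set $\lambda = C_1\varepsilon^{-1/s}|Q_0|^{\frac{s-r}{sr}}\|f\|_{\avgL^r(3Q_0)}$. Note that $\|\mathbbm 1_{3Q_0}f\|_{L^r}=|3Q_0|^{1/r}\|f\|_{\avgL^r(3Q_0)}$. Define
\[
E=\{x\in Q_0: |T(\mathbbm 1_{3Q_0}f)(x)|>\lambda\}\cup\{x\in Q_0: M_T(\mathbbm 1_{3Q_0}f)(x)>\lambda\}.
\]
By the weak $(r,s)$ bounds,
\[
|E|\le 2\big(\|T\|+\|M_T\|\big)^s\lambda^{-s}|3Q_0|^{s/r}\|f\|^s_{\avgL^r(3Q_0)} .
\]
Choosing $C_1$ large (depending on $n,r,s,T$), this becomes $|E|\le \frac{\varepsilon}{2^{n+1}}|Q_0|$; the powers of $|Q_0|$ cancel precisely because of the factor $|Q_0|^{\frac{s-r}{sr}}$. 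Let $\mathcal{G}$ be the maximal cubes $Q\in\mathcal{D}(Q_0)$ with $|Q\cap E|>2^{-n-1}|Q|$. Then $\sum_{\mathcal G}|Q|\le 2^{n+1}|E|\le\varepsilon|Q_0|$, and also $|Q\cap E|\le \frac12|Q|$ for each $Q\in\mathcal{G}$, since the parent of $Q$ is not selected. Moreover $Q_0\notin\mathcal G$, and by Lebesgue differentiation $|E\setminus\bigcup_{\mathcal G}Q|=0$.

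\emph{Pointwise estimate.} Let $\bar{\mathcal G}$ be as in the statement. For a.e.\ $x\in Q_0\setminus\bigcup\bar Q$ we have $x\notin E$, so the difference is just $|T(\mathbbm 1_{3Q_0}f)(x)|\le\lambda$. For $x\in\bar Q$ we use linearity to write
\[
T(\mathbbm 1_{3Q_0}f)-T(\mathbbm 1_{3\bar Q}f)=T(\mathbbm 1_{3Q_0\setminus 3\bar Q}f),
\]
and it remains to bound this by $\lambda$ on $\bar Q$. This is the main obstacle: we need a point of $\bar Q$ outside $E$ at which to evaluate $M_T$, and $\bar Q$ need not belong to $\mathcal{G}$.

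\emph{Handling the obstacle.} If $\bar Q\subseteq Q_0$ is not contained in any cube of $\mathcal G$, then by maximality $|\bar Q\cap E|\le 2^{-n-1}|\bar Q|$. If $\bar Q$ is contained in some $Q\in\mathcal G$, then, since the $\bar Q$ are disjoint and cover $Q$, in fact $\bar Q=Q\in\mathcal G$, so $|\bar Q\cap E|\le\frac12|\bar Q|$. In either case $\bar Q\setminus E$ has positive measure, so we can pick $x'\in\bar Q\setminus E$. By the definition of $M_T$, taking the cube $\bar Q\ni x'$,
\[
\esssup_{y\in\bar Q}|T(\mathbbm 1_{\R^n\setminus 3\bar Q}\mathbbm 1_{3Q_0}f)(y)|\le M_T(\mathbbm 1_{3Q_0}f)(x')\le\lambda .
\]
Since $\mathbbm 1_{\R^n\setminus3\bar Q}\mathbbm 1_{3Q_0}=\mathbbm 1_{3Q_0\setminus 3\bar Q}$, this gives $|T(\mathbbm 1_{3Q_0\setminus 3\bar Q}f)|\le\lambda$ a.e.\ on $\bar Q$, which is the required bound with $C=C_1$. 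The only remaining care is that $\bar Q\subset Q_0$ implies $3\bar Q\subset 3Q_0$, so the truncation is consistent.
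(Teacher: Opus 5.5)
Your proof is correct and follows essentially the same route as the paper's: the same exceptional set built from the weak-type bounds for $T$ and $M_T$, the same maximal dyadic cubes of relative density $>2^{-n-1}$, and the same two-part pointwise estimate, in which a point of $\bar Q\setminus E$ is used to control $T(\mathbbm 1_{3Q_0\setminus 3\bar Q}f)$ on $\bar Q$ via $M_T$. The only differences are cosmetic. You bound $\sum_{Q\in\mathcal G}|Q|$ directly using disjointness rather than the weak $(1,1)$ bound for $M^{\mathcal D}$. You also split the cubes $\bar Q$ by whether they lie inside some $Q\in\mathcal G$, which forces $\bar Q=Q$ because $Q$ is covered by a single $\bar Q$; the paper instead splits by whether $\bar Q$ covers some $Q\in\mathcal G$ or is disjoint from $\bigcup_{Q\in\mathcal G}Q$.
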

\begin{proof}
    For $\lambda>0$ define the set
    \[
        E_\lambda\coloneqq Q_0\cap\{|T(\mathbbm 1_{3Q_0}f)|>\lambda \text{ or } M_T(\mathbbm 1_{3Q_0}f)>\lambda\};
    \]
    the exact value of $\lambda$ will be fixed below.  Let $\mathcal{G}\subset\mathcal{D}(Q_0)$ be the collection of maximal cubes $Q$ that satisfy 
    \[
        \frac{|Q\cap E_\lambda|}{|Q|}>2^{-n-1}.
    \]
    It follows from the weak-type bounds that
    \begin{align*}
        |E_\lambda|&\leq |\{|T(\mathbbm 1_{3Q_0}f)|>\lambda\}|+|\{|M_T(\mathbbm 1_{3Q_0}f)|>\lambda\}|\\
        &\leq \frac{1}{\lambda^s}3^\frac{ns}{r}(\|T\|^s_{L^r\to L^{s,\infty}}+\|M_T\|^s_{L^r\to L^{s,\infty}})\|f\|^s_{\avgL^r(3Q_0)}|Q_0|^\frac{s}{r}\\
        &\leq 2\Big(\frac{1}{\lambda}3^\frac{n}{r}c(r,s,T)\|f\|_{\avgL^r(3Q_0)}|Q_0|^\frac{1}{r}\Big)^s,
    \end{align*}
    where $c(r,s,T)\coloneqq\max\{\|T\|_{L^r\to L^{s,\infty}},\|M_T\|_{L^r\to L^{s,\infty}}\}$. Therefore, 
    \begin{multline*}
        |G|\coloneqq \bigg|\bigcup_{Q\in\mathcal{G}}Q\bigg|
        =|\{M^\mathcal{D}(\mathbbm 1_{E_\lambda})>2^{-n-1}\}|\\
        \leq 2^{n+1}|E_\lambda|
        \leq 2^{n+2}\Big(\frac{1}{\lambda}3^\frac{n}{r}c(r,s,T)\|f\|_{\avgL^r(3Q_0)}|Q_0|^\frac{1}{r}\Big)^s,
    \end{multline*}
    where $M^\mathcal{D}$ is the Hardy-Littlewood maximal operator defined with respect to the dyadic grid $\D$. If we let
    \[
        \lambda\coloneqq 2^\frac{n+2}{s}3^\frac{n}{r}c(r,s,T)\varepsilon^{-\frac{1}{s}}|Q_0|^\frac{s-r}{sr}\|f\|_{\avgL^r(3Q_0)},
    \] 
    then we get $\sum_{Q\in\mathcal{G}}|Q|\leq \varepsilon|Q_0|$. Note that this implies that the cubes $Q\in\mathcal{G}$ are proper subcubes of $Q_0$.  

\smallskip

    Now let  $\bar{\mathcal{G}}\subset\mathcal{D}(Q_0)$ be a disjoint collection such that $\bar G\coloneqq\bigcup_{\bar Q\in\bar{\mathcal{G}}}\bar Q\supset \bigcup_{Q\in\mathcal{G}}Q=G$ and any $Q\in \mathcal{G}$ is covered by some $\bar Q\in\bar{\mathcal{G}}$. We estimate as follows:
    \begin{equation}\label{eq:simpledecomp}
        |T(\mathbbm 1_{3Q_0}f)\mathbbm 1_{Q_0}-\sum_{\bar Q\in\bar{\mathcal{G}}} T(\mathbbm 1_{3\bar Q}f)\mathbbm 1_{\bar Q}|
         \leq 
         |T(\mathbbm 1_{3Q_0}f)|\mathbbm 1_{Q_0\setminus\bar G}
         +\sum_{{\bar Q\in\bar{\mathcal{G}}}}|T(\mathbbm 1_{3Q_0\setminus 3\bar Q}f)|\mathbbm 1_{\bar Q}.
    \end{equation}
    Since $M^\mathcal{D}(\mathbbm 1_{E_\lambda})\geq\mathbbm 1_{E_\lambda}$ almost everywhere, we see that $E_\lambda$ is contained in $G$, except perhaps for a subset of measure zero. Hence, up to a set of measure zero, we have $\bar G\supset E_\lambda$, which implies
    \begin{equation}\label{eq:firstterm}
        |T(\mathbbm 1_{3Q_0}f)|\mathbbm 1_{Q_0\setminus\bar G}\leq \lambda\mathbbm 1_{Q_0\setminus\bar G}
    \end{equation}
    almost everywhere.

    To estimate the second term on the right-hand side of \eqref{eq:simpledecomp}, note that since the cubes in $\mathcal{G}$ are proper subcubes of $Q_0$, we have by maximality that every parent cube $P_Q$ of $Q\in\mathcal{G}$ satisfies
    \[
        \frac{|P_Q\cap E_\lambda|}{|P_Q|}\leq2^{-n-1}.
    \]
    This in turn implies that
    \[
        \frac{|Q\cap E_\lambda|}{|Q|}\leq\frac{|P_Q\cap E_\lambda|}{2^{-n}|P_Q|}\leq \frac{1}{2}.
    \]
    Thus, for any $Q\in\mathcal{G}$,  $|Q\setminus E_\lambda|\geq \frac{1}{2}|Q|>0$. Therefore, any $Q\in\mathcal{G}$ intersects $Q_0\setminus E_\lambda$. In particular, this property holds for any $\bar Q\in\bar{\mathcal{G}}$. Indeed, any $\bar Q\in\bar{\mathcal{G}}$ either covers some $Q\in\mathcal{G}$ or is  disjoint from $G$.
    If $\bar Q$ covers some $Q\in\mathcal{G}$, then clearly $\bar Q$ intersects $Q_0\setminus E_\lambda$. If $\bar Q$ and $G$ are disjoint, then again by maximality
    \[
        \frac{|\bar Q\cap E_\lambda|}{|\bar Q|}\leq2^{-n-1}.
    \]
    Therefore, any $\bar Q\in \bar{\mathcal{G}}$ intersects $Q_0\setminus E_\lambda$.
    Hence, there exists a point $z\in \bar Q$ such that
    \[
        \esssup_{y\in \bar Q}|T(\mathbbm 1_{3Q_0\setminus3\bar Q})(y)|\leq  
        M_T(\mathbbm 1_{3Q_0}f)(z)\leq \lambda;
    \]
    consequently,
    \[
        |T(\mathbbm 1_{3Q_0\setminus3\bar Q})|\mathbbm 1_{\bar Q}\leq \lambda\mathbbm 1_{\bar Q}
    \]
    almost everywhere. If we combine this with \eqref{eq:simpledecomp} and \eqref{eq:firstterm}, we get
    \[
        |T(\mathbbm 1_{3Q_0}f)\mathbbm 1_{Q_0}-\sum_{\bar Q\in\bar{\mathcal{G}}} T(\mathbbm 1_{3\bar Q}f)\mathbbm 1_{\bar Q}|
         \leq  \lambda\mathbbm 1_{Q_0}= C\varepsilon^{-\frac{1}{s}}|Q_0|^\frac{s-r}{sr}\|f\|_{\avgL^r(3Q_0)}\mathbbm 1_{Q_0}
    \]
    as desired.
\end{proof}

We are now ready to prove convex body domination for $T$.

\begin{theorem}\label{thm:pointwiseCBDfromweaktypebounds}
    Let $r,s,T$ be as in Proposition \ref{prop:scalarbounds}. Then there exists $0<\eta<1$ such that, for every $\vecf \in L^\infty_c(\rn,\R^d)$, there exist  $\eta$-sparse collections $\mathcal{S}_i$, $i=1,\dots,3^n$, in different dyadic lattices, and a constant $C\coloneqq C(n,s,r,T)$  such that for almost every $x\in\R^n$,
    \[
        T\vecf(x)\in \sum_{i=1}^{3^n}\sum_{Q\in\mathcal{S}_i}|Q|^\frac{s-r}{sr}\llangle \vecf\rrangle_{\avgL^r(Q)}\mathbbm 1_{Q}(x).
    \]
\end{theorem}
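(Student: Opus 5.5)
The plan is to run Lerner's iterative stopping-time argument, with the scalar bound of Proposition~\ref{prop:scalarbounds} lifted to a convex-body statement by Proposition~\ref{prop:pointwisebootstrap}.

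\textbf{Reduction to a single cube.} Fix $\vecf\in L^\infty_c(\rn,\R^d)$ and a dyadic lattice $\D$. Choose $Q_0\in\D$ such that $\supp\vecf\subset Q_0$ and $\supp \vecf\subset 3Q_0$. The first step is to cover all of $\rn$, not only $Q_0$. For this I use the usual device: take a chain of dyadic cubes $Q_0\subset Q_1\subset\cdots$ increasing to $\rn$; this is possible by property (2) of dyadic lattices, after translating the lattice if necessary so that the $Q_j$ exhaust $\rn$. The annuli $Q_{j}\setminus Q_{j-1}$ are unions of dyadic cubes $R$ with $\vecf\mathbbm 1_{3R}$ controlled. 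Run the local argument on each $Q_j$; on $Q_j$ we have $T\vecf\mathbbm 1_{Q_j}=T(\mathbbm 1_{3Q_j}\vecf)\mathbbm 1_{Q_j}$, and the resulting families remain sparse. This step is standard, as in Lerner's paper.

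\textbf{Local iteration.} Define $T_Q g\coloneqq T(\mathbbm 1_{3Q}g)\mathbbm 1_Q$. Proposition~\ref{prop:scalarbounds} gives exactly hypotheses (1)--(2) of Proposition~\ref{prop:pointwisebootstrap}, with constant $C_0=C\varepsilon^{-1/s}|Q_0|^{\frac{s-r}{sr}}$. Apply Proposition~\ref{prop:pointwisebootstrap} with $\varepsilon=\frac{1}{2d}$. This yields a disjoint family $\hat{\mathcal G}\subset\D(Q_0)$ with $\sum_{\hat{\mathcal G}}|\hat Q|\le\frac12|Q_0|$ and
\[
T_{Q_0}\vecf-\sum_{\hat Q\in\hat{\mathcal G}}T_{\hat Q}\vecf\in C|Q_0|^{\frac{s-r}{sr}}\llangle\vecf\rrangle_{\avgL^r(3Q_0)}\mathbbm 1_{Q_0}.
\]
Here I multiply by $\mathbbm 1_{Q_0}$, which is harmless since everything is supported in $Q_0$ and $0$ lies in each body. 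Iterate on each $\hat Q$: set $\mathcal F_0=\{Q_0\}$, let $\mathcal F_{k+1}$ be the union of the families produced inside the cubes of $\mathcal F_k$, and let $\mathcal F=\bigcup_k\mathcal F_k$.

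Summing the inclusions telescopically, and using that Minkowski sums of the bodies contain sums of elements, gives for each $K$
\[
T_{Q_0}\vecf\in C\sum_{k<K}\sum_{Q\in\mathcal F_k}|Q|^{\frac{s-r}{sr}}\llangle\vecf\rrangle_{\avgL^r(3Q)}\mathbbm 1_Q+\sum_{Q\in\mathcal F_K}T_Q\vecf.
\]
Since $|\bigcup\mathcal F_K|\le 2^{-K}|Q_0|$, the remainder is supported on a set of measure tending to $0$. Hence, for a.e.\ $x$, only finitely many levels contain $x$ and the remainder vanishes for large $K$. No convergence in the Hausdorff metric is needed, because pointwise the sum is finite. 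The family $\mathcal F$ is $\frac12$-sparse: take $E_Q=Q\setminus\bigcup_{\mathcal F}\mathrm{ch}(Q)$.

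\textbf{Passage to $3^n$ lattices.} The sum above involves $\llangle\vecf\rrangle_{\avgL^r(3Q)}$. By Theorem~\ref{thm:three-lattice}, the cubes $3Q$ with $Q\in\mathcal F$ split into $3^n$ families $\mathcal S_i$, each lying in a single dyadic lattice and $3^{-n}\cdot\frac12$-sparse, as shown after the sparse definition. Since $\mathbbm 1_Q\le\mathbbm 1_{3Q}$ and $0\in\llangle\vecf\rrangle$, the term $\mathbbm 1_Q$ can be replaced by $\mathbbm 1_{3Q}$. The factor $|Q|^{\frac{s-r}{sr}}$ equals $3^{-n\frac{s-r}{sr}}|3Q|^{\frac{s-r}{sr}}$. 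The constant $C$ is absorbed by scaling, since $C\llangle\vecf\rrangle=\llangle C\vecf\rrangle$; alternatively it can be kept, as it is in the statement.

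\textbf{Expected obstacle.} The delicate step is checking hypothesis (2) of Proposition~\ref{prop:pointwisebootstrap} in the iteration. The bootstrap merges the $d$ scalar families into their maximal cubes $\hat{\mathcal G}$, and Proposition~\ref{prop:scalarbounds} is what guarantees that the scalar bound survives this enlargement. The other point needing care is the a.e.\ finiteness of the pointwise sum. The global exhaustion step should be routine.
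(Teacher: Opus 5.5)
Your local iteration and final step are the paper's argument. You feed Proposition~\ref{prop:pointwisebootstrap} with Proposition~\ref{prop:scalarbounds} at $\varepsilon/d$, iterate the stopping time to get a $(1-\varepsilon)$-sparse tree, replace $Q$ by $3Q$ (using $0\in\llangle\vecf\rrangle$), and apply Theorem~\ref{thm:three-lattice}. Your a.e.\ finiteness argument (the sets $\bigcup\mathcal F_K$ decrease with measure $\le 2^{-K}|Q_0|$, so the remainder vanishes eventually) is, if anything, more careful than the paper's.

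The step to fix is the global reduction. As written, you run the local argument on each of the nested cubes $Q_j$ and assert that the resulting families remain sparse. A union of sparse families living in nested cubes is not sparse in general: each family could place a full layer of cubes inside $Q_0$ at a new scale. You give no reason why the union over $j$ would be sparse. What you need is the partition you already mention: $Q_0$ together with the children $R\neq Q_{j-1}$ of $Q_j$, $j\ge 1$.
\begin{itemize}
\item These cubes are pairwise disjoint and cover $\R^n$. No translation is needed, since in a dyadic lattice properties (2) and (3) force the ancestors of any cube to exhaust $\R^n$.
\item For a child $R$ of $Q_j$ one has $3R\supset Q_j\supset\supp\vecf$, so $T\vecf\,\mathbbm 1_R=T(\mathbbm 1_{3R}\vecf)\mathbbm 1_R$.
\item Running the local argument on each such $R$ gives families in disjoint cubes, and their union is trivially $(1-\varepsilon)$-sparse.
\end{itemize}
This is the paper's device: it takes the maximal dyadic cubes not containing $\supp\vecf$, whose parents then contain it.

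Your aside that $C$ can be absorbed because $C\llangle\vecf\rrangle=\llangle C\vecf\rrangle$ is not valid. Both sides of the inclusion are homogeneous of degree one in $\vecf$, so this identity removes nothing. The constant must remain in the inclusion, as in the last line of the paper's proof; the displayed statement simply omits it.
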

\begin{proof}
    We partition $\R^n$ with maximal dyadic cubes $\{Q_0^k\}_{k=1}^\infty$ such that $Q_0^k\not\supset\supp \vecf$. If $\vecf$ is non-trivial, since it has compact support, we have that  the maximal cubes exist since any compact set is contained in some (large) dyadic cube. Consequently, we have that $\supp \vecf\subset 3Q^k_0$ for each $k$, so we may write
    \[
        T\vecf=\sum_{k=1}^\infty (T\vecf)\mathbbm 1_{Q_0^k}= \sum_{k=1}^\infty T(\mathbbm 1_{3Q_0^k}\vecf)\mathbbm 1_{Q_0^k}.
    \]
    If we combine Proposition \ref{prop:pointwisebootstrap} and Proposition \ref{prop:scalarbounds} with $\frac{\varepsilon}{d}$ in place of $\varepsilon$, we get that there exists  a constant $C:=C(d,n,s,r,\varepsilon,T)$ and a disjoint collection $\mathcal{G}_k\subset \mathcal{D}(Q_0^k)$ such that $\sum_{Q\in\mathcal{G}_k}|Q|\leq \varepsilon|Q_0^k|$ and 
\[
    T(\mathbbm 1_{3Q_0^k}\vecf)\mathbbm 1_{Q_0^k}\in C|Q_0^k|^\frac{s-r}{sr}\llangle \vecf\rrangle_{\avgL^r(3Q_0^k)}\mathbbm 1_{Q_0^k}+\sum_{Q\in\mathcal{G}_k}T(\mathbbm 1_{3Q}\vecf)\mathbbm 1_{Q}.
\]
We now repeat this  argument on each $Q\in \mathcal{G}_k$ in place of $Q_0^k$ and iterate. After the $N$-th iteration, the size of the cubes is at most $\varepsilon^N|Q_0^k|$, which implies that as $N\to\infty$ the terms produced by the iteration will tend to $0$ almost everywhere. Therefore, in the limit the iteration argument gives us that
\begin{equation*}
    T(\mathbbm 1_{3Q_0^k}\vecf)\mathbbm 1_{Q_0^k}\in C\sum_{Q\in\tilde{\mathcal{S}_k}}|Q|^\frac{s-r}{sr}\llangle \vecf\rrangle_{\avgL^r(3Q)}\mathbbm 1_{Q},
\end{equation*}
where the collection $\tilde{\mathcal{S}_k}\subset\mathcal{D}(Q_0^k)$ is $(1-\varepsilon)$-sparse. Then  $\tilde{\mathcal{S}}=\bigcup_k\tilde{\mathcal{S}_k}$ is also $(1-\varepsilon)$-sparse, and we have
\[
    T\vecf\in C\sum_{Q\in\tilde{\mathcal{S}}}|Q|^\frac{s-r}{sr}\llangle \vecf\rrangle_{\avgL^r(3Q)}\mathbbm 1_{Q}\subset C\sum_{Q\in\tilde{\mathcal{S}}}|3Q|^\frac{s-r}{sr}\llangle \vecf\rrangle_{\avgL^r(3Q)}\mathbbm 1_{3Q}.
\]
Finally, by Theorem~\ref{thm:three-lattice} (the three lattice trick)  we have that $\{3Q\,\colon\,Q\in\tilde{\mathcal S}\}=\bigcup_{i=1}^{3^n}\mathcal{S}_i$, where the $\mathcal S_i$ are $3^{-n}(1-\varepsilon)$-sparse collections in different dyadic lattices. Therefore, we have 
\[
    T\vecf\in C\sum_{i=1}^{3^n}\sum_{Q\in\mathcal{S}_i}|Q|^\frac{s-r}{sr}\llangle \vecf\rrangle_{\avgL^r(Q)}\mathbbm 1_{Q}.
\]
\end{proof}

\subsection*{Sparse bounds for fractional operators}
In this section we first prove convex body domination for fractional integrals, Theorem~\ref{thm:I-alpha-sparse-intro}, as an immediate consequence of Theorem~\ref{thm:pointwiseCBDfromweaktypebounds}.  We then prove bounds for a family of more general fractional singular integrals, and for fractional integrals that satisfy H\"ormander conditions.  (We give precise definitions of both classes of operators below.)

We first consider $I_\alpha$, $0<\alpha<n$.  It is well-known (see, for example, Grafakos~\cite[Theorem~6.1.3]{grafakos08b}) that the fractional maximal operator satisfies
\begin{equation} \label{eqn:Ialpha-bd} 
I_\alpha : L^r(\R^n) \rightarrow L^{s,\infty}(\R^n), 
\end{equation}
where $r=1$ and $s=\frac{n}{n-\alpha}$.  Thus, Theorem~\ref{thm:I-alpha-sparse-intro} follows at once if we can prove that the
Lerner maximal operator, 
\[ M_{I_\alpha}f(x) = \sup_{Q\ni x} \sup_{y\in Q} |I_\alpha(f\mathbbm 1_{\R^n\setminus{3Q}})(y)|, \]
also satisfies
\begin{equation} \label{eqn:MIalpha-bd} 
M_{I_\alpha} : L^1(\R^n) \rightarrow L^{s,\infty}(\R^n).
\end{equation}

To prove this, fix $x$, $Q$ containing $x$ and $y\in Q$.  If $z\in \R^n\setminus 3Q$, then $|x-z|\approx |y-z|$, where the implicit constants depend only on $n$.  Hence,
    \[ |I_\alpha(f\mathbbm 1_{\R^n\setminus{3Q}})(y)| 
    \leq \int_{\R^n\setminus 3Q} \frac{|f(z)|}{|y-z|^{n-\alpha}}\,dz 
     \leq c\int_{\R^n\setminus 3Q} \frac{|f(z)|}{|x-z|^{n-\alpha}}\,dz 
     = cI_\alpha(|f|)(x). 
\]
Since this estimate is independent of $Q$ and $y$, we have that $M_{I_\alpha}f(x)\lesssim I_\alpha(|f|)(x)$.  It now follows at once from \eqref{eqn:Ialpha-bd} that \eqref{eqn:MIalpha-bd} holds.

\medskip

    Inequalities analagous to~\eqref{eqn:Ialpha-bd} and~\eqref{eqn:MIalpha-bd} also hold for more general fractional singular integrals. Given $0 \leq \alpha < n$, we say that an operator $T_\alpha$, defined on measurable functions, is a fractional singular integral if $T_\alpha$ is bounded from $L^p(\R^n)$ to $L^q(\R^n)$ for some fixed $1 < p \leq q < \infty$, and for any $f \in L^p(\R^n)$ with compact support,
\begin{gather}\label{Eq:genFracInt}
    T_\alpha f(x) = \int_{\R^n} K_\alpha(x,y) f(y) \, dy.
\end{gather}
Here, the kernel $K_\alpha(x,y)$ is  defined for all $(x,y) \in \R^n \times \R^n$, $x \neq y$, and satisfies the  size estimate
\begin{gather} \label{kernel singularity bound}
    |K_\alpha(x,y)| \leq \frac{C_0}{|x-y|^{n-\alpha}}.
\end{gather}

Given this definition, it is immediate that $|T_\alpha f(x)| \leq I_\alpha(|f|)(x)$, and both weak-type inequalities follow at once. Thus, we have convex body domination for such operators.   We record this as a corollary.

\begin{cor} \label{Cor:MTalpha-bd}
Fix $0<\alpha<n$ and let $T_\alpha$ be a fractional singular integral operator. Then there exists $0<\eta<1$ such that, for every $\vecf \in L^\infty_c(\rn,\R^d)$, there exist $\eta$-sparse collections $\mathcal{S}_i$, $i=1,\dots,3^n$, in different dyadic lattices with the property that for almost every $x\in \rn$,
\begin{equation}\label{Eq:CBDforfracOper}
    T_\alpha \vecf(x) 
\in C \sum_{i=1}^{3^n}\sum_{Q\in \Ss_i} |Q|^{\frac{\alpha}{n}}\llangle \vecf\rrangle_{\avgL^1(Q)}\, \mathbbm 1_Q(x).
\end{equation}
\end{cor}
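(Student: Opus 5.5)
The plan is to apply Theorem~\ref{thm:pointwiseCBDfromweaktypebounds} with $r=1$ and $s=\frac{n}{n-\alpha}$. For these exponents
\[ \frac{s-r}{sr} = 1-\frac1s = 1-\frac{n-\alpha}{n} = \frac{\alpha}{n}, \]
so the factor $|Q|^{\frac{s-r}{sr}}$ in that theorem is exactly $|Q|^{\frac{\alpha}{n}}$, and its conclusion becomes \eqref{Eq:CBDforfracOper}. It therefore suffices to check two things: that $T_\alpha$ is linear, and that both $T_\alpha$ and its Lerner maximal operator $M_{T_\alpha}$ map $L^1(\R^n)\to L^{s,\infty}(\R^n)$.

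Linearity is part of the definition of a fractional singular integral. For the weak-type bounds, the first step is the pointwise domination. For $f\in L^\infty_c$, the integral representation \eqref{Eq:genFracInt} together with the size estimate \eqref{kernel singularity bound} gives
\[ |T_\alpha f(x)|\le C_0 I_\alpha(|f|)(x). \]
Combined with \eqref{eqn:Ialpha-bd}, this yields the weak $(1,s)$ bound for $T_\alpha$ on this dense class.

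For $M_{T_\alpha}$ I would repeat the argument given for $M_{I_\alpha}$. Fix $x\in Q$ and $y\in Q$. Since $\mathbbm 1_{\R^n\setminus 3Q}f$ vanishes near $y$, the kernel representation applies, and we have $|y-z|\approx|x-z|$ for $z\notin 3Q$. This gives
\[ |T_\alpha(\mathbbm 1_{\R^n\setminus 3Q}f)(y)|\le C_0\int_{\R^n\setminus 3Q}\frac{|f(z)|}{|y-z|^{n-\alpha}}\,dz\lesssim I_\alpha(|f|)(x). \]
Taking the supremum over $y$ and $Q$ shows $M_{T_\alpha}f\lesssim I_\alpha(|f|)$, and \eqref{eqn:Ialpha-bd} again gives the weak-type bound.

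The one point needing care is the definitional one. The kernel representation is only asserted for compactly supported $f\in L^p$, whereas $\mathbbm 1_{\R^n\setminus 3Q}f$ need not have compact support if $f$ does not. I would handle this in one of two ways. The first is to note that in the proofs of Proposition~\ref{prop:scalarbounds} and Theorem~\ref{thm:pointwiseCBDfromweaktypebounds}, the operator is only ever applied to $\vecf\in L^\infty_c$ and to truncations $\mathbbm 1_{3Q_0\setminus 3\bar Q}f$, all of which are compactly supported. The second is to use $L^p\to L^q$ boundedness and monotone approximation to extend the representation to the functions needed. Either way, the weak-type hypotheses hold on the class actually used, and the corollary follows.
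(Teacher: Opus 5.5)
Your proposal is correct and follows the paper's route: the pointwise bound $|T_\alpha f|\le C_0 I_\alpha(|f|)$ gives both weak-type $(1,\frac{n}{n-\alpha})$ estimates, with the one for $M_{T_\alpha}$ obtained as for $M_{I_\alpha}$. Theorem~\ref{thm:pointwiseCBDfromweaktypebounds} is then applied with $r=1$, $s=\frac{n}{n-\alpha}$. Your added remark, that the proof of that theorem only ever applies $T_\alpha$ and $M_{T_\alpha}$ to bounded compactly supported functions, is a valid way to handle the domain issue the paper passes over silently.
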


\begin{remark}
    Our definition of a fractional singular integral operator is different from the definition given in~\cite{MR3470665}, where they assume that~\eqref{Eq:genFracInt}  only holds if $x\not\in \supp(f)$ and that the kernel satisfies the smoothness estimate
\[    |K_\alpha(x+h,y) - K_\alpha(x,y)| + |K_\alpha(x,y+h) - K_\alpha(x,y)| \leq C_0 \frac{|h|^\delta}{|x-y|^{n-\alpha+\delta}}
\]
for all $|h| < \frac{1}{2}|x-y|$ and some fixed $\delta > 0$.  In fact, in Section~\ref{section:sobolev} we will apply Corollary~\ref{Cor:MTalpha-bd} to operators which also satisfy this definition. We chose our definition since the pointwise bound on $T_\alpha$ by $I_\alpha$ immediately gives us Corollary~\ref{Cor:MTalpha-bd}.  We leave it as an open problem to show that convex body domination holds for the fractional singular integrals as defined in~\cite{MR3470665}.
\end{remark}

\medskip

Finally, we consider a family of fractional integrals that satisfy H\"ormander-type conditions and which satisfy the hypotheses of Theorem~\ref{thm:pointwiseCBDfromweaktypebounds} with $r\geq 1$.   Fix $0<\alpha<n$ and $1\leq r<\frac{n}{\alpha}$;  we consider a "rough" fractional singular integral $T_\alpha$ defined  by
\[ T_\alpha f(x) = \int_{\R^n} K_\alpha(x-y)f(y)\,dy, \]
where the convolution kernel $K_\alpha$ satisfies the size condition
\begin{equation}\label{Eq:Hormanderkernel1}
    \|\mathbbm 1_{t<|\cdot|\leq 2t}K_\alpha\|_{\avgL^{r'}(B(0,2t))} 
    = \left(\frac{1}{|B(0,2t)|}\int_{t<|x|\leq2t}|K_\alpha(x)|^{r'}\,dx\right)^\frac{1}{r'}\leq Ct^{\alpha-n}, \quad t>0,
\end{equation}
and an $L^{r'}$-H\"ormander condition:  there exist constants $c_r,\,C_r >0$, such that for all $x\in\R^n$ and $R>c_r|x|$,
\begin{equation}\label{Eq:Hormanderkernel2}
   \sum_{m=1}^\infty (2^mR)^{n-\alpha}\|\mathbbm 1_{2^mR<|\cdot|\leq 2^{m+1}R}(K_\alpha(\cdot-x)-K_\alpha)\|_{\avgL^{r'}(B(0,2^{m+1}R))}\leq C_r.
\end{equation}
In \cite[Proposition~1.10, Lemma~3.2]{FRV2022sharp},  they showed that $T_\alpha$ and $M_{T_\alpha}$ are bounded, $L^{r}\to L^{s,\infty}$, where $1\leq r<\frac{n}{\alpha}$ and $s=\frac{rn}{n-\alpha r}$. We note that $\frac{s-r}{sr}=\frac{\alpha}{n}$. Therefore, by Theorem~\ref{thm:pointwiseCBDfromweaktypebounds} we immediately get the following result.

\begin{cor} \label{Cor:MTalpha-hormander-bd}
Fix $0<\alpha<n$ and $1\leq r<\frac{n}{\alpha}$.  Let $T_\alpha$ be defined as above. Then there exists $0<\eta<1$ such that, for every $\vecf \in L^\infty_c(\rn,\R^d)$, there exist $\eta$-sparse collections $\mathcal{S}_i$, $i=1,\dots,3^n$, in different dyadic lattices  with the property that for almost every $x\in \rn$,
\begin{equation}\label{Eq:CBDforfracOper}
    T_\alpha \vecf(x) 
\in C \sum_{i=1}^{3^n}\sum_{Q\in \Ss} |Q|^{\frac{\alpha}{n}}\llangle \vecf\rrangle_{\avgL^r(Q)}\, \mathbbm 1_Q(x).
\end{equation}
\end{cor}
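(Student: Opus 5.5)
The corollary follows by applying Theorem~\ref{thm:pointwiseCBDfromweaktypebounds} to $T=T_\alpha$ with exponents $r$ and $s=\frac{rn}{n-\alpha r}$. The work is to check that its hypotheses hold and that the exponent of $|Q|$ is the right one.

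First, since $1\leq r<\frac{n}{\alpha}$, we have $n-\alpha r>0$. Hence $s$ is finite and satisfies $s\geq r\geq 1$, so $1\leq r,s<\infty$ as Proposition~\ref{prop:scalarbounds} requires. Second, a direct computation gives
\[ \frac{1}{r}-\frac{1}{s} = \frac{1}{r}-\frac{n-\alpha r}{rn} = \frac{\alpha}{n}, \]
that is, $\frac{s-r}{sr}=\frac{\alpha}{n}$. Consequently the factor $|Q|^{\frac{s-r}{sr}}$ in the conclusion of Theorem~\ref{thm:pointwiseCBDfromweaktypebounds} is exactly $|Q|^{\frac{\alpha}{n}}$.

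The substantive hypotheses are the weak-type bounds $T_\alpha\colon L^r\to L^{s,\infty}$ and $M_{T_\alpha}\colon L^r\to L^{s,\infty}$. We take both from \cite[Proposition~1.10, Lemma~3.2]{FRV2022sharp}, which use only the size condition \eqref{Eq:Hormanderkernel1} and the $L^{r'}$-H\"ormander condition \eqref{Eq:Hormanderkernel2}. The only point to confirm is that their Lerner-type maximal operator matches ours, defined with $\mathbbm 1_{\R^n\setminus 3Q}$ and an essential supremum over $Q$. If their operator uses a different dilate of $Q$ or a plain supremum, I would adjust the proof: a point $y\in Q$ and a point $z\notin 3Q$ satisfy $|y-z|\gtrsim \ell(Q)$, and \eqref{Eq:Hormanderkernel2} applied with $R\approx \ell(Q)$ bounds the difference between the two truncations by $M_r f$. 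Since $M_r$ maps $L^r$ to $L^{r,\infty}$ locally, and the fractional version $M_{\alpha,r}$ maps $L^r$ to $L^{s,\infty}$, this gives the required bound. I expect this identification to be the only real obstacle, and a minor one.

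With the hypotheses verified, Theorem~\ref{thm:pointwiseCBDfromweaktypebounds} gives, for each $\vecf\in L^\infty_c(\rn,\R^d)$, $\eta$-sparse families $\mathcal S_i$, $i=1,\dots,3^n$, in different dyadic lattices such that
\[ T_\alpha\vecf(x)\in C\sum_{i=1}^{3^n}\sum_{Q\in\mathcal S_i}|Q|^{\frac{\alpha}{n}}\llangle\vecf\rrangle_{\avgL^r(Q)}\mathbbm 1_Q(x) \quad\text{for almost every } x\in\rn. \]
Here $\eta=3^{-n}(1-\varepsilon)$ does not depend on $\vecf$, and $C$ depends only on $n,d,r,\alpha$ and the kernel constants. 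The constant $C$ appearing in front of the sum in the statement is harmless: since $\llangle\vecf\rrangle_{\avgL^r(Q)}$ is convex and symmetric, dilating the Minkowski sum by $C$ is the same as absorbing $C$ into each body. This is exactly the inclusion claimed in the corollary.
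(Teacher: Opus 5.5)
Your proposal is correct and follows the paper's argument: take the weak-type bounds $T_\alpha, M_{T_\alpha}\colon L^r\to L^{s,\infty}$ with $s=\frac{rn}{n-\alpha r}$ from \cite[Proposition~1.10, Lemma~3.2]{FRV2022sharp}, check that $\frac{s-r}{sr}=\frac{\alpha}{n}$, and apply Theorem~\ref{thm:pointwiseCBDfromweaktypebounds} directly. Your contingency paragraph is not needed and slightly misattributes the tools: a different dilate would be handled by the size condition \eqref{Eq:Hormanderkernel1}, which gives control by $M_{\alpha,r}f$, not by the H\"ormander condition and $M_rf$; and a plain supremum dominates the essential supremum, so in that case no adjustment is needed.
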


Corollary~\ref{Cor:MTalpha-hormander-bd} applies to a number of different operators.  
If $K_\alpha(x)= |x|^{\alpha-n}$, that is, if $T_\alpha$ is the fractional integral, then these conditions hold for $r=1$.  When $r>1$, define rough fractional integrals with kernels of the form 
\[ K_\alpha(x) = \Omega(x')|x|^{\alpha-n}, \]
where $x'= x/|x|$ and $\Omega\in L^{r'}(S^{n-1})$, $r'>\frac{n}{n-\alpha}$.  In~\cite[Section~2]{FRV2022sharp} they show that if $\Omega$ satisfies a Dini-type continuity condition, then the conditions of Corollary~\ref{Cor:MTalpha-hormander-bd} hold.  
These operators have been considered previously by Bernardis, {\em et al.}~\cite{MR2884902}, Segovia and Torrea~\cite{MR1074151}, Chanillo, {\em et al.}~\cite{MR1247201}, and Ding and Lu~\cite{MR1618714}.  

In~\cite[Remark~1.4]{FRV2022sharp}, the authors note that their results can be extended to fractional singular integrals with non-convolution kernels.  Such operators include fractional operators related to the Riesz transform associated with the Schr\"odinger operator $L=-\Delta+V$, where the potential $V$ satisfies the reverse H\"older condition $RH_{\frac{n}{2}}$.  These are a special case of a class of operators considered by Kurtz~\cite{MR1054087}.  Convex body domination also holds for these operators; we leave the details to the interested reader.

\section{Bounds for generalized commutators}
\label{section:commutators}

In this section we use the convex body domination  in Theorem~\ref{thm:pointwiseCBDfromweaktypebounds} to prove matrix-weighted inequalities for commutators.   To unify our presentation, we will consider the general class of operators $T_\alpha$, $0\leq \alpha <n$,  that satisfy the following convex body domination:
for all $\vecf\in L^\infty_c(\R^d,\R^n)$ there exist sparse collections $\mathcal{S}_i$, $i=1,\dots,3^n$, of dyadic cubes such that
\begin{equation}\label{Eq:CBDcommutatorsection}
    T_\alpha \vecf(x) 
    \in C \sum_{i=1}^{3^n}\sum_{Q\in \Ss_i} |Q|^{\frac{\alpha}{n}}\llangle \vecf\rrangle_{\avgL^r(Q)}\, \mathbbm 1_Q(x).
\end{equation}  
We are primarily interested in the off-diagonal case, but we note that our results hold when $\alpha=0$.  If $T_\alpha$ is an operator that satisfies the hypotheses of Theorem~\ref{thm:pointwiseCBDfromweaktypebounds} for some $1\leq r,\,s<\infty$, then it satisfies~\eqref{Eq:CBDcommutatorsection} with $\frac{\alpha}{n}=\frac{s-r}{sr}$.  In particular, for the fractional integral operator $I_\alpha$ we have $r=1$, $s=\frac{n}{n-\alpha}$. 

We now define the generalized commutator with matrix symbols.  
Let $\mathbf A=(A_1,\dots,A_k)$ and $\mathbf B=(B_1,\dots,B_k)$ be $k$-tuples of $d\times d$ matrix-valued functions in $\R^n$. 
Define the generalized commutator $[T_\alpha,(\mathbf A,\mathbf B)]$ of an operator $T_\alpha$ acting on vector-valued functions by
\begin{equation*}\label{Eq:generalizedcommutator}
    \left[T_\alpha,(\mathbf A,\mathbf B)\right]\vecf\coloneqq\sum_{i=1}^kA_iT_\alpha (B_i\vecf).
\end{equation*}

\subsection*{Two-weight norm inequalities}
We first prove two-matrix-weight norm inequalities for $[T,(\mathbf A,\mathbf B)]$.  To state our main result, we  introduce some notation.  
We extend the Euclidean dot product to the $k$-tuples $\mathbf A,\, \mathbf B$ by
\[
    \mathbf A(y)\cdot \mathbf B(x)\coloneqq\sum_{i=1}^kA_i(y)B_i(x).
\]
Given Young functions $\Phi$ and $\Psi$, we define the iterated norm $\avgL^\Psi_y(\avgL^\Phi_x)(Q)$ by
\[  \|f(x,y)\|_{\avgL^\Psi_y(\avgL^\Phi_x)(Q)} = \big\| \|f(x,y)\|_{\avgL^\Phi_x(Q)} \big\|_{\avgL^\Psi_y(Q)};  \]
the subscripts $x$ and $y$ indicate which variable the norms are computed with.

\begin{theorem}\label{thm:orliczbumps}
    Given $0\leq \alpha<n$ and $1<p<\frac{n}{\alpha}$, define $q$ by $\frac{1}{p}-\frac{1}{q}=\frac{\alpha}{n}$, and let $1\leq r<p$. Suppose that $\Phi$ and $\Psi$ are Young functions such that 
    \[\bar \Phi_r\in \mathcal B_{p,q},\; \bar \Psi\in \mathcal B_{q'} 
    \qquad\text{or} \qquad \bar \Phi_r\in\mathcal B_{p}, \; \bar \Psi\in\mathcal B_{q',p'},\]
    where $\bar{\Phi}_r(t)\coloneqq \bar\Phi(t^r)$.
    Suppose further that $V$ and $W$ are matrix weights such that
    \[
        \kappa\coloneqq\sup_Q\||V(y)\left[\mathbf A(y)\cdot \mathbf B(x)\right]W^{-1}(x)|_\op\|_{\avgL^\Psi_y(\avgL^{\Phi_r}_x)(Q)}<\infty,
    \]
      If $T_\alpha$ is an operator that satisfies~\eqref{Eq:CBDcommutatorsection}, then the generalized commutator $\left[T_\alpha,(\mathbf A,\mathbf B)\right]$ is bounded from $L^p(W)\to L^q(V)$ and satisfies
      \[ \|\left[T_\alpha,(\mathbf A,\mathbf B)\right]\|_{L^p(V)} 
      \leq CB(\Phi,\Psi) \kappa \|f\|_{L^p(W)},
\]
where $B(\Phi,\Psi)$ equals either $B_{p,q}(\bar{\Phi}_r) B_{q'}(\bar{\Psi})$ or $B_{p}(\bar{\Phi}_r) B_{q',p'}(\bar{\Psi})$.
\end{theorem}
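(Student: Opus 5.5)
By duality, I will bound $\int \langle V[T_\alpha,(\mathbf A,\mathbf B)]\vecf,\vecg\rangle\,dx$ for $\vecg\in L^{q'}(\R^n,\R^d)$ with $\|\vecg\|_{L^{q'}}\le 1$, and I will use the convex body domination~\eqref{Eq:CBDcommutatorsection}. By density I may take $\vecf$ bounded with compact support, and after the substitution $\vecf = W^{-1}\vecu$ the goal becomes $\|V[T_\alpha,(\mathbf A,\mathbf B)]W^{-1}\vecu\|_{L^q}\lesssim B(\Phi,\Psi)\kappa\|\vecu\|_{L^p}$.

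The first step is a pointwise reduction to a scalar sparse form. Stack $k$ copies into a vector-valued function in $\R^{dk}$: the function $\mathbf F = (B_1W^{-1}\vecu,\dots,B_kW^{-1}\vecu)$. Since $T_\alpha$ acts componentwise, applying~\eqref{Eq:CBDcommutatorsection} to $\mathbf F$ (in dimension $dk$) gives
\[ T_\alpha\mathbf F(x)\in C\sum_i\sum_{Q\in\Ss_i}|Q|^{\alpha/n}\llangle\mathbf F\rrangle_{\avgL^r(Q)}\mathbbm 1_Q(x). \]
Then apply the linear map $(\vecv_1,\dots,\vecv_k)\mapsto V(x)\sum_j A_j(x)\vecv_j$. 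For $x\in Q$, an element of the image of $\llangle\mathbf F\rrangle_{\avgL^r(Q)}$ has the form $\avgint_Q V(x)[\mathbf A(x)\cdot\mathbf B(y)]W^{-1}(y)\vecu(y)\varphi(y)\,dy$ with $\|\varphi\|_{\avgL^{r'}}\le1$. (The roles of $x$ and $y$ are swapped relative to the definition of $\kappa$, which is only notation.) Pairing with $\vecg(x)$ and using H\"older in $y$ with exponents $r,r'$, I get
\[ |\langle V[\cdot]W^{-1}\vecu,\vecg\rangle|\lesssim\sum_i\sum_{Q\in\Ss_i}|Q|^{\alpha/n}\int_Q \Big(\avgint_Q |V(x)[\mathbf A(x)\cdot\mathbf B(y)]W^{-1}(y)|_\op^r|\vecu(y)|^r\,dy\Big)^{1/r}|\vecg(x)|\,dx. \]

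The second step applies the Orlicz H\"older inequality~\eqref{eq:orliczholder} twice. In $y$, apply it to $|K|^r|\vecu|^r$ with the pair $(\Phi,\bar\Phi)$ and use the rescaling~\eqref{eqn:rescaling}. This bounds the inner quantity by $\|K(x,\cdot)\|_{\avgL^{\Phi_r}_y(Q)}\|\vecu\|_{\avgL^{\bar\Phi_r}(Q)}$, where $\bar\Phi_r$ is the rescaled associate. The notation in the statement suggests that $\overline{\Phi_r}$ is controlled by $(\bar\Phi)_r$ up to constants; I would verify this and otherwise interpret the hypothesis accordingly. In $x$, apply H\"older with $(\Psi,\bar\Psi)$ to get $|Q|\,\kappa\,\|\vecu\|_{\avgL^{\bar\Phi_r}(Q)}\|\vecg\|_{\avgL^{\bar\Psi}(Q)}$. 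The problem is thereby reduced to the scalar sparse bilinear form
\[ \sum_{Q\in\Ss}|Q|^{1+\alpha/n}\|\vecu\|_{\avgL^{\bar\Phi_r}(Q)}\|\vecg\|_{\avgL^{\bar\Psi}(Q)}. \]

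The third step, which I expect to be the main obstacle, is bounding this form by the bump constants. Using sparseness, I replace $|Q|$ by $\eta^{-1}|E_Q|$. In the first case I split $|Q|^{\alpha/n}$ onto the $\vecu$ factor: $|Q|^{\alpha/n}\|\vecu\|_{\avgL^{\bar\Phi_r}(Q)}\le M_{\alpha,\bar\Phi_r}\vecu$ and $\|\vecg\|_{\avgL^{\bar\Psi}(Q)}\le M_{\bar\Psi}\vecg$ on $E_Q$. Disjointness of the $E_Q$ and H\"older with exponents $q,q'$ then give
\[ \|M_{\alpha,\bar\Phi_r}\vecu\|_{L^q}\|M_{\bar\Psi}\vecg\|_{L^{q'}}\lesssim B_{p,q}(\bar\Phi_r)B_{q'}(\bar\Psi)\|\vecu\|_{L^p}, \]
by~\eqref{Eq:FracOrliczMaximalEst}. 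In the second case I put the fractional factor on $\vecg$, use H\"older with exponents $p,p'$, and bound $\|M_{\alpha,\bar\Psi}\vecg\|_{L^{p'}}\lesssim B_{q',p'}(\bar\Psi)\|\vecg\|_{L^{q'}}$ (note $\frac1{q'}-\frac1{p'}=\frac{\alpha}{n}$). The delicate points are justifying the convex body manipulation with matrix functions depending on $x$ (the map applied varies with $x$, but the inclusion holds pointwise, so this is fine), and checking that the condition $r<p$ together with the bump hypotheses is exactly what makes the maximal operators bounded. Summing over the $3^n$ lattices completes the proof.
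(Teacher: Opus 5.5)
Your proposal is correct and follows the paper's proof essentially step for step. You stack into $\mathbf F\in\R^{dk}$ and apply the convex body domination; your pointwise linear map $(\vecv_1,\dots,\vecv_k)\mapsto V(x)\sum_j A_j(x)\vecv_j$ is just the dual form of the paper's pairing with $\mathbf G=(A_i^*V\vecg)_i$. You then use H\"older with $r,r'$, the Orlicz H\"older inequality in each variable together with rescaling, and control the sparse form by $M_{\alpha,\bar\Phi_r}$ and $M_{\bar\Psi}$ (or by $M_{\bar\Phi_r}$ and $M_{\alpha,\bar\Psi}$), exactly as the paper does.

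Your hedge about $\overline{\Phi_r}$ versus $(\bar\Phi)_r$ is unnecessary. The rescaling identity \eqref{eqn:rescaling} applied to $\bar\Phi$ produces exactly $\bar\Phi_r(t)=\bar\Phi(t^r)$, which is how the statement defines it.
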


\begin{proof}
By duality and a change of variables, replacing $\vecf$ with $W^{-1} \vecf$, it will suffice to show that for any $\vecf,\, \vecg \in L^\infty_c(\R^n,\R^d)$, 
\[ \big|\left\langle V\left[T_\alpha,(\mathbf A,\mathbf B)\right]W^{-1}\vecf,\vecg\right\rangle \big|
\leq CB(\Phi,\Psi) \kappa \|\vecf\|_{L^p(\R^n,\R^d)} \|\vecg \|_{L^{q'}(\R^n,\R^d)}.  
\]
By the definition of the generalized commutator, we have that
\begin{multline*}
    \left\langle V\left[T_\alpha,(\mathbf A,\mathbf B)\right]W^{-1}\vecf,\vecg\right\rangle
    =\sum_{i=1}^k\langle VA_iT_\alpha(B_iW^{-1}\vecf),\vecg\rangle\\
    =\sum_{i=1}^k\langle T_\alpha(B_iW^{-1}\vecf),A_i^* V\vecg\rangle
    =\sum_{i=1}^k\sum_{j=1}^d\langle T_\alpha([B_iW^{-1}\vecf]_j),[A_i^* V\vecg\,]_j\rangle
    =\langle T_\alpha\mathbf F,\mathbf G\rangle,
\end{multline*}
where $\mathbf F$ is the vector in $\R^{dk}$ whose components are the $dk$ components of the vectors
\[B_1W^{-1}\vecf,\dots,B_kW^{-1}\vecf, \] 
and $\mathbf G$ is defined in an analogous way. If we apply~\eqref{Eq:CBDcommutatorsection} to $T_\alpha \mathbf F$, we get 
\begin{equation}\label{Eq:CBDforgeneralcommutator}
    |\left\langle V\left[T_\alpha,(\mathbf A,\mathbf B)\right]W^{-1}\vecf,\vecg\right\rangle|
    \leq C\sum_{i=1}^{3^n}\sum_{Q\in\mathcal{S}_i}|Q|^{\frac{\alpha}{n}+1}\llangle \mathbf F\rrangle_{\avgL^r(Q)}\cdot
    \avgint_Q \mathbf G(y)\,dy.
\end{equation}
To estimate the inner product, by the definition of $\llangle \mathbf F\rrangle_{\avgL^r(Q)}$ it will suffice to give a uniform estimate of
\begin{equation*}  
    \avgint_Q\avgint_Q K(x) \mathbf F(x)\cdot \mathbf G(y)\, dx\, dy,
\end{equation*}
where $K \in \overline{\mathbf B}(r',Q)$. If we write out the dot product in terms of the constituent vectors in $\R^d$, we get
\begin{align*}
    \mathbf F(x)\cdot \mathbf G(y)
    & =\sum_{i=1}^kB_i(x)W^{-1}(x)\vecf(x)\cdot A_i(y)^* V(y)\vecg(y)\\
    & =\sum_{i=1}^kV(y)A_i(y)B_i(x)W^{-1}(x)\vecf(x)\cdot \vecg(y) \\
    & =V(y)\left[\mathbf A(y)\cdot \mathbf B(x)\right]W^{-1}(x)\vecf(x)\cdot \vecg(y).
\end{align*}
Therefore, by H\"older's inequality and the fact that $\|K\|_{\avgL^{r'}(Q)}\leq 1$, we have
\begin{align*}
  & \bigg|\avgint_Q\avgint_Q K(x) \mathbf F(x)\cdot \mathbf G(y)\, dx\, dy\bigg|   \\
  & \qquad \qquad \leq\avgint_Q\avgint_Q |K(x)| |V(y)\left[\mathbf A(y)\cdot \mathbf B(x)\right]W^{-1}(x)|_\op|\vecf(x)| \,dx |\vecg(y)|\,dy \\
  & \qquad \qquad \leq\avgint_Q\left(\avgint_Q |V(y)\left[\mathbf A(y)\cdot \mathbf B(x)\right]W^{-1}(x)|_\op^r|\vecf(x)|^r \,dx
  \right)^\frac{1}{r}|\vecg(y)|\,dy.
  \intertext{We now apply the generalized H\"older's inequality for Orlicz spaces \eqref{eq:orliczholder}, first with $\Phi$ and then with $\Psi$, and then use the rescaling property~\eqref{eqn:rescaling} to get }
  & \qquad \qquad \leq 2\avgint_Q \big\| |V(y)\left[\mathbf A(y)\cdot \mathbf B(x)\right]W^{-1}(x)|_\op^r \|_{\avgL_x^\Phi(Q)}^{\frac{1}{r}}
  \| |\vecf|^r \|_{\avgL^{\bar \Phi}(Q)}^{\frac{1}{r}}|\vecg(y)|\,dy \\
  & \qquad \qquad = 2\avgint_Q \big\| |V(y)\left[\mathbf A(y)\cdot \mathbf B(x)\right]W^{-1}(x)|_\op \|_{\avgL_x^{\Phi_r}(Q)}
  \| \vecf \|_{\avgL^{\bar \Phi_r}(Q)}|\vecg(y)|\,dy \\
  & \qquad \qquad  \leq 4\big\| |V(y)\left[\mathbf A(y)\cdot \mathbf B(x)\right]W^{-1}(x)|_\op \|_{\avgL^\Psi_y(\avgL_x^{\Phi_r})(Q)}
  \| \vecf \|_{\avgL^{\bar \Phi_r}(Q)} \|\vecg\|_{\avgL^{\bar \Psi}(Q)}.
\end{align*}
Since this bound is independent of $K$, we have shown that 
\begin{equation} \label{eqn:dot-prod-est}
 \llangle \mathbf F\rrangle_{\avgL^r(Q)}\cdot
    \avgint_Q \mathbf G(y)\,dy \leq 
    4\kappa \| \vecf \|_{\avgL^{\bar \Phi_r}(Q)} \|\vecg\|_{\avgL^{\bar \Psi}(Q)}.
    \end{equation}

To complete the proof, first suppose that $\bar \Phi_r\in \mathcal B_{p,q},\; \bar \Psi\in \mathcal B_{q'}$.  
   If we combine~\eqref{eqn:dot-prod-est} with~\eqref{Eq:CBDforgeneralcommutator}, use the fact that the sets $\Ss_i$ are $\eta$-sparse, and apply H\"older's inequality, we get that 
    \begin{align*}
        |\left\langle V\left[T_\alpha,(\mathbf A,\mathbf B)\right]W^{-1}\vecf,\vecg\right\rangle|
        &\lesssim \eta^{-1} \kappa \sum_{i=1}^{3^n} \sum_{Q\in\mathcal{S_i}}
        |Q|^{\frac{\alpha}{n}}\|\vecf\|_{\avgL^{\bar \Phi_r}(Q)}
        \|\vecg\|_{\avgL^ {\bar \Psi}(Q)} |E_Q| \\
        &\lesssim 3^n \kappa \int_{\R^n} M_{\alpha,\bar \Phi_r}\vecf(x)M_{\bar \Psi}\vecg(x)\,dx \\
        &\lesssim\kappa\|M_{\alpha,\bar \Phi_r}\vecf\|_{L^q(\R^n)}\|M_{\bar \Psi}\vecg\|_{L^{q'}(\R^n)}\\&\lesssim\kappa\, B_{p,q}(\bar \Phi_r)B_{q'}(\bar\Psi)\|\vecf\|_{L^p(\R^n,\R^d)}\|\vecg\|_{L^{q'}(\R^n,\R^d)}.
    \end{align*}
    In the last inequality we used the fractional maximal Orlicz bound \eqref{Eq:FracOrliczMaximalEst}.

    On the other hand, if $\bar \Phi_r\in\mathcal B_{p},\;\bar \Psi\in\mathcal B_{q',p'}$, then we can modify the above argument to show that
    \begin{align*}
        |\left\langle V\left[T_\alpha,(\mathbf A,\mathbf B)\right]W^{-1}\vecf,\vecg\right\rangle|
        &\lesssim \kappa\sum_{i=1}^{3^n} \sum_{Q\in\mathcal{S}_i}|Q|^{\frac{\alpha}{n}}
        \|\vecf\|_{\avgL^{\bar \Phi_r}(Q)}\|\vecg\|_{\avgL^{\bar \Psi}(Q)} |E_Q|\\
        &\leq\kappa\|M_{\bar \Phi_r}\vecf\|_{L^p(\R^n)}\|M_{\alpha,\bar \Psi}\vecg\|_{L^{p'}(\R^n)}\\ 
        &\lesssim\kappa\, B_{p}(\bar \Phi_r)B_{q',p'}(\bar\Psi)\|\vecf\|_{L^p(\R^n,\R^d)}\|\vecg\|_{L^{q'}(\R^n,\R^d)}.
    \end{align*}
    This completes the proof.
\end{proof}

\begin{remark}\label{remark:iteratednorms}
    It is clear from the proof of Theorem \ref{thm:orliczbumps} that in the case $r=1$ one could also take the iterated norm in a different order. In particular, with $\Phi,\,\Psi$ as in Theorem \ref{thm:orliczbumps}, we have that
    \[
       \sup_Q\|V(y)\left[\mathbf A(y)\cdot \mathbf B(x)\right]W^{-1}(x)\|_{\avgL^\Phi_x(\avgL^\Psi_y)(Q))}<\infty
    \]
    also implies $L^p(W)\to L^q(V)$ boundedness for $[I_\alpha,(\mathbf{A,B})]$.
\end{remark}

As a corollary of Theorem \ref{thm:orliczbumps} we have the following power bump result.

\begin{cor}\label{cor:powerbumps}
  Given $0 \leq \alpha<n$ and $1<p<\frac{n}{\alpha}$, define $q$ by $\frac{1}{p}-\frac{1}{q}=\frac{\alpha}{n}$, and let $1\leq r<p$. Let $u,\,v>1$  and suppose that $V,\,W$ are matrix weights such that
    \[
        \sup_Q\left(\avgint_Q\left[\avgint_Q |V(y)\left[\mathbf A(y)\cdot \mathbf B(x)\right]W^{-1}(x)|_{\op}^{u(\frac{p}{r})'}\, dx\right]^\frac{vq}{u(\frac{p}{r})'} \,dy\right)^\frac{1}{vq}<\infty.
    \]
    If $T_\alpha$ is an operator that satisfies~\eqref{Eq:CBDcommutatorsection}, then the generalized commutator $\left[T_\alpha,(\mathbf A,\mathbf B)\right]$ is bounded from $L^p(W)\to L^q(V)$.
\end{cor}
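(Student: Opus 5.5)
The plan is to deduce Corollary~\ref{cor:powerbumps} directly from Theorem~\ref{thm:orliczbumps} by choosing power Young functions. Set $\Phi(t)=t^{u(p/r)'/r}$ and $\Psi(t)=t^{vq}$. Then $\Phi_r(t)=\Phi(t^r)=t^{u(p/r)'}$, so the Orlicz norms reduce to normalized Lebesgue norms: $\|\cdot\|_{\avgL^{\Phi_r}_x(Q)}=\|\cdot\|_{\avgL^{u(p/r)'}_x(Q)}$ and $\|\cdot\|_{\avgL^{\Psi}_y(Q)}=\|\cdot\|_{\avgL^{vq}_y(Q)}$. With these choices the constant $\kappa$ of Theorem~\ref{thm:orliczbumps} is exactly the supremum assumed finite in the corollary. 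We must check that $\Phi$ and $\Psi$ are Young functions. This holds for $\Psi$ since $vq>1$. For $\Phi$ we need $u(p/r)'/r>1$; since $r<p$ we have $(p/r)'=p/(p-r)$, so $u(p/r)'/r=up/(r(p-r))$. This exceeds $1$ whenever $p\le 2r$. Otherwise it can be enforced by enlarging $u$, which is harmless because the hypothesis with a larger $u$ follows from Hölder's inequality only in the other direction. It is therefore cleaner to note that Theorem~\ref{thm:orliczbumps}'s proof only uses the Hölder inequality for the pair $(\Phi_r,\bar\Phi_r)$, which holds for $L^{s}$ with $s>1$.

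Next we compute the associate functions. For $\Phi_r(t)=t^{a}$ with $a=u(p/r)'$, the conjugate bump governing $\vecf$ is $\bar\Phi_r(t)\approx t^{a'}$, up to constants that do not affect the norm estimates. Since $a>(p/r)'\ge p'$ when $r\ge1$, we get $a'<p$. Similarly $\bar\Psi(t)\approx t^{(vq)'}$ with $(vq)'<q'$.

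We then verify the $\mathcal B$ conditions, using the first alternative $\bar\Phi_r\in\mathcal B_{p,q}$ and $\bar\Psi\in\mathcal B_{q'}$. For $\bar\Psi$, the integral $\int_1^\infty t^{(vq)'}t^{-q'}\,dt/t$ is finite because $(vq)'<q'$. For $\bar\Phi_r$, we need $\int_1^\infty t^{a'q/p-q}\,dt/t<\infty$, that is, $a'<p$, which was shown above. Theorem~\ref{thm:orliczbumps} then gives $L^p(W)\to L^q(V)$ boundedness with constant $C\,B_{p,q}(\bar\Phi_r)B_{q'}(\bar\Psi)\kappa$, which is finite.

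The only delicate point is the bookkeeping. First, $\Phi_r$ rather than $\Phi$ enters $\kappa$, so $\bar\Phi_r$ must be read as the associate of $\Phi_r$, as it effectively is in the proof of Theorem~\ref{thm:orliczbumps}. Second, power functions are Young functions only up to normalization; the resulting constants are absorbed into $C$. Neither issue affects the conclusion.
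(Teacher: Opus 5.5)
Your plan of plugging power bumps into Theorem~\ref{thm:orliczbumps} is the paper's one-line proof, and for $r=1$ your argument is complete and identical to it, since then $\Phi_r=\Phi$ and $\bar\Phi_r=\bar\Phi$. For $r>1$ there is a genuine gap at the step $\bar\Phi_r(t)\approx t^{a'}$, where $a=u(\frac pr)'$.

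In Theorem~\ref{thm:orliczbumps}, $\bar\Phi_r$ is not the associate of $\Phi_r$; it is defined by $\bar\Phi_r(t)=\bar\Phi(t^r)$, and the proof forces this. The selection $K\in\overline{\mathbf B}(r',Q)$ leaves the inner average $\big(\avgint_Q F^r|\vecf|^r\,dx\big)^{1/r}$, where $F=|V(y)[\mathbf A(y)\cdot\mathbf B(x)]W^{-1}(x)|_{\op}$, and H\"older is applied to $F^r$ and $|\vecf|^r$. In Lebesgue terms, $F\in\avgL^a$ pairs with $\vecf\in\avgL^b$ where $\frac1r=\frac1a+\frac1b$; taking $b=a'$ would force $r=1$.

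With your choice $\Phi(t)=t^{a/r}$ one gets $\bar\Phi_r(t)\approx t^{ra/(a-r)}$, and $\frac{ra}{a-r}<p$ holds if and only if $u>r$. So for $1<u\le r$ the bump on $\vecf$ is $\avgL^b$ with $b\ge p$. For $a\le r$, which is exactly when your $\Phi$ fails to be a Young function, H\"older is not available at all. In either case neither $\mathcal B_{p,q}$ nor $\mathcal B_p$ holds, and the theorem does not apply.

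Your proposed repairs do not close this. Enlarging $u$ strengthens the hypothesis, as you note yourself. The ``cleaner'' route via H\"older for the pair $(\Phi_r,\overline{\Phi_r})$ is precisely the invalid estimate $\big(\avgint_Q F^r|\vecf|^r\big)^{1/r}\lesssim\|F\|_{\avgL^a(Q)}\|\vecf\|_{\avgL^{a'}(Q)}$. Also, $u(\frac pr)'/r>1$ is not implied by $p\le 2r$: take $r=3$, $p=6$, $u<\frac32$.

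The paper instead takes $\Phi(t)=t^{u(p/r)'}$, which is a Young function for every $u>1$. Then $\bar\Phi_r(t)\approx t^{r(u(p/r)')'}$ with $r(u(\frac pr)')'<r\cdot\frac pr=p$, so $\bar\Phi_r\in\mathcal B_p\subset\mathcal B_{p,q}$. Your verification that $\bar\Psi\in\mathcal B_{q'}$ goes through unchanged.

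The price is that $\Phi_r(t)=t^{ru(p/r)'}$, so $\kappa$ is the iterated $\avgL^{vq}_y(\avgL^{ru(p/r)'}_x)$ norm. The displayed hypothesis therefore has to be read with exponent $ru(\frac pr)'$ in $x$. This is the form used for $\kappa_2$ in the proof of Theorem~\ref{thm:MatWeightCommutatorBound}, and it agrees with the printed exponent only when $r=1$. With the exponent exactly as printed, the argument through Theorem~\ref{thm:orliczbumps} works only under the extra assumption $u>r$. In that case your $\Phi$ is the paper's choice with $u/r$ in place of $u$.
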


\begin{proof}
    The Young functions $t \mapsto t^{u(\frac{p}{r})'}$ and  $t\mapsto t^{vq}$ satisfy the hypotheses on $\Phi$ and $\Psi$ in Theorem~\ref{thm:orliczbumps}, which immediately gives us the desired result.
\end{proof}


We now show how particular choices of $A_i,\,B_i$ yield specific examples of commutators.

\begin{example}[Fixed symbol $B$] \label{Ex:Commutatorswithasinglesymbol}
    The choices $r=1$, $T_\alpha=I_\alpha$, $\mathbf{A}=(B,-\operatorname{Id})$ and $\mathbf{B}=(\operatorname{Id},B)$ gives us the first order commutator \[[I_\alpha,B]\vecf\coloneqq BI_\alpha\vecf-I_\alpha(B\vecf).\] 
\end{example}    

\begin{remark}
    In this case Theorem \ref{thm:orliczbumps} recovers \cite[Proposition 1.5]{CI2022twomatrixweightedfracint}.
\end{remark}

\begin{example}
    For each non-negative integer $k$, if we let $A_j=\binom{k}{j}B^{k-j}$ and $B_j=(-B)^j$, $j=1,\dots,k$, then we get the $k$-th order iterated commutator 
    \[
    C_B^k(I_\alpha)\coloneqq[B,C_B^{k-1}(I_\alpha)],\qquad C^0_B(I_\alpha)\coloneqq I_\alpha. \] 
\end{example}

\begin{example}[Mixed symbols]\label{Ex:Commutatorswithmultiplesymbols}
Define $k=2^m$, $\mathbf A=\{A_\sigma\}_{\sigma\in\{0,1\}^m}$, $\mathbf B=\{B_\sigma\}_{\sigma\in\{0,1\}^m}$, where
    \[
        A_\sigma(y)\coloneqq \prod_{j\colon \sigma_j=0}B^{(j)}(y)
    \]
    and
    \[
        B_\sigma(x)\coloneqq (-1)^{|\sigma|}\prod_{j\colon \sigma_j=1}B^{(j)}(x).
    \]
    With these values we get  the mixed iterated commutators 
    \[C_{B^{(1)},\dots,B^{(m)}}(T_\alpha)\coloneqq[B^{(m)},C_{B^{(1)},\dots,B^{(m-1)}}(T_\alpha)],
    \qquad C_{B^{(1)}}(T_\alpha)\coloneqq[B^{(1)},T_\alpha]. \] 
    \end{example}
    
    \begin{remark} \label{remark:irrsr0}
  In this case, Theorem \ref{thm:orliczbumps} is an analogue of \cite[Theorem 12]{IRS2026convexbodydominationcommutator}, which was proved for Calder\'on-Zygmund operators (i.e., the diagonal case when $\alpha=0$).
    \end{remark}
    
    In \cite{HLO2020iteratedcommutators,hyt2024remarksonCBD} they considered iterated commutators with $m=2$ and  scalar symbols for Calder\'on-Zygmund operators $T$. In \cite{HLO2020iteratedcommutators} they showed that given  Young functions $\Phi,\Psi, \Xi$ such that their associate functions are in  $\mathcal B_2$,
    \[
        \sup_Q\|b^{(1)}-\langle b^{(1)}\rangle_Q\|_{\Phi(Q)}\|b^{(2)}-\langle b^{(2)}\rangle_Q\|_{\Psi(Q)}<\infty
    \]
    and 
    \[
        \sup_Q\|(b^{(1)}-\langle b^{(1)}\rangle_Q)(b^{(2)}-\langle b^{(2)}\rangle_Q)\|_{\Xi(Q)}<\infty,
    \]
    then 
    \[
        C_{b^{(1)},b^{(2)}}(T):L^2(\R^n)\to L^2(\R^n).
    \]
    As we noted above, we can apply Theorem~\ref{thm:orliczbumps} with $\alpha=0$, that is, to Calder\'on-Zygmund operators.  In particular if we assume that $\bar\Phi,\bar\Psi\in\mathcal B_{2}$, then the condition to get boundedness in $L^2(\R^n)$ is
    \begin{equation} \label{eqn:alt-cond}
        \kappa=\sup_Q\|(b^{(1)}(y)-b^{(1)}(x))(b^{(2)}(y)-b^{(2)}(x))\|_{\avgL^\Phi_x(\avgL^\Psi_y)(Q)}<\infty.
    \end{equation}
    In \cite[Example 7.8]{hyt2024remarksonCBD} they showed that, if $\Phi$ and $\Psi$ are power bumps (as in Corollary~\ref{cor:powerbumps}), then  condition~\eqref{eqn:alt-cond} is at least as sharp as the one in \cite{HLO2020iteratedcommutators}. 

\subsection*{One-weight norm inequalities}
We now consider one-matrix-weight norm inequalities.  For simplicity, we will restrict our attention to 
the  $k$-th order iterated commutator with  scalar symbol $b$.

\begin{theorem}\label{thm:MatWeightCommutatorBound}
 Given $0 \leq \alpha<n$ and $1<p<\frac{n}{\alpha}$, define $q$ by $\frac{1}{p}-\frac{1}{q}=\frac{\alpha}{n}$, and let $1\leq r<p$. Let $k$ be a nonnegative integer and suppose that $W^r\in \A_{\frac{p}{r},\frac{q}{r}}$ and $b\in \operatorname{BMO}$.  If $T_\alpha$ is an operator that satisfies~\eqref{Eq:CBDcommutatorsection}, then we have 
\begin{equation} \label{eqn:MWCB1}
        \|WC^k_b(T_\alpha)\vecf\|_{L^q(\R^n,\R^d)}\leq C\|b\|_{BMO}^k [W^r]_{\A_{\frac{p}{r},\frac{q}{r}}}^{\beta}\|W\vecf\|_{L^p(\R^n,\R^d)},
        \end{equation}
    where 
    \[
        \beta\coloneqq k\max\left\{\left(\frac{p}{r}\right)',\frac{q}{r}\right\}+\min\left\{\left(\frac{p}{r}\right)'\frac{1}{q}+\frac{q}{rq'},\,\left(\frac{p}{r}\right)'\frac{1}{p}+\frac{q}{rp'}\right\}+\frac{1}{r}
    \]
    and the constant $C$ depends only on fixed parameters $k,p,q,r,n,d$ and $\eta$.
\end{theorem}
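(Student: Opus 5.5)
Our plan is to deduce the estimate from Theorem~\ref{thm:orliczbumps}, in the power-bump form of Corollary~\ref{cor:powerbumps}, with $V=W$ and the choice of $\mathbf A,\mathbf B$ that produces $C^k_b(T_\alpha)$. For scalar $b$ we have $\mathbf A(y)\cdot\mathbf B(x)=(b(y)-b(x))^k I_d$. The quantity to control is therefore
\[
\kappa=\sup_Q\Big\|\,|b(y)-b(x)|^k\,|W(y)W^{-1}(x)|_{\op}\Big\|_{\avgL^{vq}_y(\avgL^{u(p/r)'}_x)(Q)} .
\]
The key point is that the bump exponents $u,v$ will be chosen depending on $[W^r]_{\A_{p/r,q/r}}$, and we must track how the constants $B_{p,q}(\bar\Phi_r)B_{q'}(\bar\Psi)$, or alternatively $B_p(\bar\Phi_r)B_{q',p'}(\bar\Psi)$, blow up as $u,v\to1$. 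The two admissible pairs of bump conditions give the two entries of the $\min$ in $\beta$.

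First, we separate the BMO factor from the weight factor. Write $|b(y)-b(x)|\le|b(y)-b_Q|+|b_Q-b(x)|$ and apply H\"older's inequality in each variable with a small extra exponent. Then John--Nirenberg gives $\||b-b_Q|^k\|_{\avgL^t(Q)}\lesssim t^k\|b\|_{BMO}^k$. The remaining weight factor is
\[
\Big(\avgint_Q\Big(\avgint_Q|W(y)W^{-1}(x)|_{\op}^{u(p/r)'s}\,dx\Big)^{\frac{vq s}{u(p/r)' s}}dy\Big)^{1/(vqs)} ,
\]
with $s$ close to $1$. Using reducing operators (Proposition~\ref{prop:frac-reducing}) and Lemma~\ref{opNorm:equiv}, this reduces to showing that $|W(y)\vecv|$ and $|W^{-1}(x)\vecv|$ satisfy reverse H\"older inequalities. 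For this we apply Lemma~\ref{lemma:matrix-RHI} to $W^r\in\A_{p/r,q/r}$, and by Remark~\ref{remark:Apq-dual} also to the dual weight $W^{-r}\in\A_{(q/r)',(p/r)'}$, using the Cordes inequality (Lemma~\ref{lemma:cordes}) to pass between $W$ and $W^r$. Taking the reverse H\"older exponents $1+c/[W^r]^{q/r}$ and $1+c/[W^r]^{(p/r)'}$ respectively, the weight factor is bounded by $C[W^r]_{\A_{p/r,q/r}}^{1/r}$.

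Next we fix the exponents. We take $u-1\approx[W^r]^{-(p/r)'}$ and $v-1\approx[W^r]^{-q/r}$, up to constants. With these choices the BMO terms contribute $t^k\approx\max\{(p/r)',q/r\}^k$ in powers of $[W^r]$, which yields the term $k\max\{(p/r)',q/r\}$ in $\beta$. Finally we compute the bump constants: for $\bar\Phi_r$ with $\Phi(t)=t^{u(p/r)'}$ we have $B_p(\bar\Phi_r)\approx(u-1)^{-1/p}$-type blowup, and similarly $B_{q'}(\bar\Psi)\approx (v-1)^{-1/q'}$. Inserting $u-1$ and $v-1$ gives the exponents $(p/r)'\frac1q+\frac{q}{rq'}$ in the first configuration and $(p/r)'\frac1p+\frac{q}{rp'}$ in the second. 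Taking the better of the two yields $\beta$.

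The main obstacle is this exponent bookkeeping. Specifically, we must check precisely which of $p,q$ appears in each $B$-constant's blowup ($B_{p,q}$ versus $B_p$), and make sure that the reverse H\"older gain on the inner ($x$) and outer ($y$) norms is exactly what the iterated norm requires. To confirm the powers $1/q$ versus $1/p$, we would first compute $B_{p,q}(\bar\Phi_r)$ explicitly for power functions.
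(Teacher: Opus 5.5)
Your proposal follows essentially the same route as the paper: Theorem~\ref{thm:orliczbumps} with $V=W$ and power bumps $\Phi(t)=t^{u(p/r)'}$, $\Psi(t)=t^{vq}$, then splitting $b(y)-b(x)$ through $\langle b\rangle_Q$, H\"older plus John--Nirenberg, and reducing operators with the Cordes inequality and the sharp reverse H\"older inequality for $W^r$ and, via Remark~\ref{remark:Apq-dual}, for $W^{-r}$. As in the paper, you take $u-1\approx[W^r]^{-(p/r)'}$ and $v-1\approx[W^r]^{-q/r}$, and the blowups $B_{p,q}(\bar\Phi_r)\approx(u-1)^{-1/q}$, $B_p(\bar\Phi_r)\approx(u-1)^{-1/p}$, $B_{q'}(\bar\Psi)\approx(v-1)^{-1/q'}$, $B_{q',p'}(\bar\Psi)\approx(v-1)^{-1/p'}$ give exactly $\beta$. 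One small correction: with $\Phi(t)=t^{u(p/r)'}$ the inner norm in $\kappa$ is $\avgL^{\Phi_r}_x=\avgL^{ru(p/r)'}_x$, not $\avgL^{u(p/r)'}_x$ (they agree only when $r=1$), and this is precisely the exponent your reverse H\"older choice for $W^{-r}$ is calibrated to.
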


\begin{remark}
    When $r=1$, Theorem~\ref{thm:MatWeightCommutatorBound} reduces to Theorem~\ref{thm:MatWeightCommutatorBoundIntro}. To see that the exponent $\beta$ is the same in both statements, note that since $\frac{1}{p}-\frac{1}{q}=\frac{\alpha}{n}$, $1-\frac{\alpha}{n}=\frac{1}{p'}+\frac{1}{q}$.  Furthermore, if we let  $k=0$, then we get a norm inequality for the fractional integral with exponent $\min\{q+p'(1-\frac{\alpha}{n}),p'+(1-\frac{\alpha}{n})q\}-1$.  In other words, we get weighted estimates for fractional integrals in Theorem~\ref{thm:I-alpha-bound-intro}.
\end{remark}

\begin{remark} \label{remark:IRRSR}
Theorem~\ref{thm:MatWeightCommutatorBound} is a quantitative, off-diagonal, one-weight counterpart of \cite[Theorem 6]{IRS2026convexbodydominationcommutator}, which gives a qualitative, two-weight estimate in the diagonal case (i.e., when $\alpha=0$) when both weights are in $\A_p$ and $r=1$.  Note that in the one-weight case, their theorem gives the same power on the $BMO$ norm of $b$.  (See the discussion following the statement of Theorem~6.)
\end{remark}

\begin{proof}
By Theorem \ref{thm:orliczbumps} we have
\[
    \|WC^k_b(T_\alpha)\vecf\|_{L^q(\R^n,\R^d)}\lesssim\kappa B(\Phi,\Psi)\|W\vecf\|_{L^p(\R^n,\R^d)},
\]
where
\begin{equation}\label{Eq:BloomOrlicz}
    \kappa=\sup_Q\|W(y)W^{-1}(x)(b(y)-b(x))^k\|_{\avgL^{\Psi}_y(\avgL^{\Phi_r}_x)(Q)}.
\end{equation}
We will first estimate $\kappa$ and then estimate $B(\Phi,\Psi)$.  
Let $\Phi$ and $\Psi$  be the power bumps defined in Corollary~\ref{cor:powerbumps}, i.e, $\Psi(t)=t^{vq}$ and $\Phi(t)=t^{u(\frac{p}{r})'}$ where $u,v>1$ are defined by 
\[
    v\coloneqq1+\tilde\varepsilon,\qquad\tilde\varepsilon
    \coloneqq\frac{1}{2\tau(n)[W^r]_{\A_{\frac{p}{r},\frac{q}{r}}}^\frac{q}{r}-2},
\]
and
\[
    u\coloneqq1+\tilde\delta,\quad \tilde\delta
    \coloneqq\frac{1}{2\tau(n)[W^{-r}]_{\A_{(\frac{q}{r})',(\frac{p}{r})'}}^{(\frac{p}{r})'}-2};
\]
here $\tau(n)$ is the constant from Lemma~\ref{lemma:matrix-RHI}.  (The reason for these choices will be made clear below.)

By the triangle inequality $\kappa\lesssim \kappa_1+\kappa_2$,
where
\[
    \kappa_1\coloneqq\sup_Q\||W(y)W^{-1}(x)|_\op (b(y)-\langle b\rangle_Q)^k\|_{\avgL^{\Psi}_y(\avgL^{\Phi_r}_x)(Q)}
\]
and 
\[
    \kappa_2\coloneqq\sup_Q\||W(y)W^{-1}(x)|_\op (b(x)-\langle b\rangle_Q)^k\|_{\avgL^{\Psi}_y(\avgL^{\Phi_r}_x)(Q)}.
\]

We estimate $\kappa_1$ and $\kappa_2$ separately.    
Let  $\overline{\W}^{\Phi,r}_Q, \,\W^{\frac{q}{r},r}_Q$ be the reducing matrices that satisfy, for all $\vecv\in \R^d$, 
\[
    |\overline{\W}^{\Phi,r}_Q\vecv| \approx 
    \|W^{-r} \vecv\|_{\avgL^\Phi(Q)}=\|W^{-r}\vecv\|_{\avgL^{u(\frac{p}{r})'}},
\]
and
\[
    |\W^{\frac{q}{r},r}_Q \vecv| \approx \|W^r\vecv\|_{\avgL^{\frac{q}{r}}(Q)}.
\]
By Lemmas~\ref{SelfAdjointCommutes} and~\ref{opNorm:equiv}, we have the following well-known relation:
\begin{multline*}  \||(\overline{\W}^{\Phi,r}_Q)^{-1}W^{-r}|_\op\|_{\avgL^\Phi(Q)}
= \||W^{-r}(\overline{\W}^{\Phi,r}_Q)^{-1}|_\op\|_{\avgL^\Phi(Q)} \\
\approx \sum_{i=1}^d \|W^{-r}(\overline{\W}^{\Phi,r}_Q)^{-1}\vece_i\|_{\avgL^\Phi(Q)}
\approx \sum_{i=1}^d |\overline{\W}^{\Phi,r}_Q(\overline{\W}^{\Phi,r}_Q)^{-1}\vece_i|
= d
\end{multline*}
By the Cordes inequality, Lemma \ref{lemma:cordes}, we have that
\[
    |W(y)W^{-1}(x)|_{\op}
    \leq|W^r(y)W^{-r}(x)|_{\op}^\frac{1}{r}
    \leq |W^r(y)\overline{\W}^{\Phi,r}_Q|_{\op}^\frac{1}{r}
    |(\overline{\W}^{\Phi,r}_Q)^{-1}W^{-r}(x)|_{\op}^\frac{1}{r}.
\]

We can now estimate as follows:  by the definition of the iterated norm and by~\eqref{eqn:rescaling},
\begin{align*}
    \kappa_1
    &\leq\sup_Q\||W^r\overline{\W}^{\Phi,r}_Q|_{\op}^\frac{1}{r}(b-\langle b\rangle_Q)^k\|_{\avgL^\Psi(Q)}\|
    |(\overline{\W}^{\Phi,r}_Q)^{-1}W^{-r}|_{\op}^\frac{1}{r}\|_{\avgL^{\Phi_r}(Q)}\\
    &=\sup_Q\||W^r\overline{\W}^{\Phi,r}_Q|_{\op}^\frac{1}{r}(b-\langle b\rangle_Q)^k\|_{\avgL^\Psi(Q)}
    \||(\overline{\W}^{\Phi,r}_Q)^{-1}W^{-r}|_\op\|_{\avgL^\Phi(Q)}^\frac{1}{r}\\
    &\approx \sup_Q\||W^r\overline{\W}^{\Phi,r}_Q|_{\op}^\frac{1}{r}(b-\langle b\rangle_Q)^k\|_{\avgL^\Psi(Q)} \\
    & =\sup_Q\||W^r\overline{\W}^{\Phi,r}_Q|_{\op}^\frac{1}{r}(b-\langle b\rangle_Q)^k\|_{\avgL^{vq}(Q)}.
\end{align*}

As a consequence of the John-Nirenberg inequality (see~\cite[Lemma~7.2]{Lau2025multiscale}) we have the quantitative estimate $\|b-\langle b \rangle_Q \|_{\avgL^p(Q)} \lesssim p \|b\|_{BMO}$, $1\leq p<\infty$.  
Therefore, by  Hölder's inequality, we have that
\begin{align*}
   \||W^r\overline{\W}^{\Phi,r}_Q|_{\op}^\frac{1}{r}(b-\langle b\rangle_Q)^k\|_{\avgL^{vq}(Q)}
   &\leq\||W^r\overline{\W}^{\Phi,r}_Q|_{\op}^\frac{1}{r}\|_{\avgL^\frac{vq}{1-\varepsilon}(Q)}\|b-\langle b\rangle_Q\|_{\avgL^\frac{vqk}{\varepsilon}(Q)}^k \\
   &\lesssim \||W^r\overline{\W}^{\Phi,r}_Q|_\op\|_{\avgL^{\frac{v}{1-\varepsilon}\frac{q}{r}}(Q)}^\frac{1}{r}
   \varepsilon^{-k}\|b\|_{BMO}^k.
   \end{align*}

Let $\varepsilon\coloneqq\frac{\tilde\varepsilon}{1+2\tilde\varepsilon}$. Then we have 
\[ \frac{v}{1-\varepsilon}=1+\frac{1}{\tau(n)[W^r]_{\A_{\frac{p}{r},\frac{q}{r}}}^\frac{q}{r}-1}\quad
\text{and}\quad \frac{1}{\varepsilon}
\lesssim \frac{1}{\tilde\varepsilon}\lesssim [W^r]_{\A_{\frac{p}{r},\frac{q}{r}}}^\frac{q}{r}.\] 
Note that $\frac{v}{1-\varepsilon}$ is the exponent in the sharp reverse H\"older inequality for $|W^r\vecv|^{\frac{q}{r}}$.  Therefore, by Lemmas~\ref{opNorm:equiv} and~\ref{lemma:matrix-RHI},
   \begin{align*}      
 \||W^r\overline{\W}^{\Phi,r}_Q|_\op\|_{\avgL^{\frac{v}{1-\varepsilon}\frac{q}{r}}(Q)}^\frac{1}{r}
   \varepsilon^{-k}\|b\|_{BMO}^k
    & \lesssim \sum_{i=1}^d \||W^r\overline{\W}^{\Phi,r}_Q\vece_i|\|_{\avgL^{\frac{v}{1-\varepsilon}\frac{q}{r}}(Q)}^\frac{1}{r}
   \varepsilon^{-k}\|b\|_{BMO}^k \\   
   & \lesssim \sum_{i=1}^d \||W^r\overline{\W}^{\Phi,r}_Q\vece_i|\|_{\avgL^{\frac{q}{r}}(Q)}^\frac{1}{r}
   \varepsilon^{-k}\|b\|_{BMO}^k \\   
   & \lesssim\||W^r\overline{\W}^{\Phi,r}_Q|_\op\|_{\avgL^{\frac{q}{r}}(Q)}^\frac{1}{r}
   \varepsilon^{-k}\|b\|_{BMO}^k \\
&\lesssim |\W^{\frac{q}{r},r}_Q\,\overline{\W}^{\Phi,r}_Q|_{\op}^\frac{1}{r}[W^r]_{\A_{\frac{p}{r},\frac{q}{r}}}^{k\frac{q}{r}}\|b\|_{BMO}^k.
\end{align*}
To estimate the first term in the last line, we apply Lemma~\ref{SelfAdjointCommutes} and the definition of the reducing operators,  and repeat the above argument using the reverse H\"older inequality to get 
\begin{multline*}
    |\W^{\frac{q}{r},r}_Q\,\overline{\W}^{\Phi,r}_Q|_{\op}
    = |\overline{\W}^{\Phi,r}_Q\, \W^{\frac{q}{r},r}_Q|_{\op} 
    \approx\||W^{-r}\W^{\frac{q}{r},r}_Q|_\op\|_{\avgL^{u(\frac{p}{r})'}(Q)} \\
    \lesssim\||W^{-r}\W^q_Q|_\op\|_{\avgL^{(\frac{p}{r})'}(Q)}
    \approx |\W^{\frac{q}{r},r}_Q\,\overline{\W}_Q^{(\frac{p}{r})',r}|_{\op} 
    \approx [W^r]_{\A_{\frac{p}{r},\frac{q}{r}}};
\end{multline*}
the last inequality follows by Proposition \ref{prop:frac-reducing}.  Therefore, if we combine the above estimates, we get that
\[
    \kappa_1\lesssim [W^r]_{\A_{\frac{p}{r},\frac{q}{r}}}^{k\frac{q}{r}+\frac{1}{r}}\|b\|_{BMO}^k.
\]

To estimate $\kappa_2$, let $\delta\coloneqq\frac{\tilde\delta}{1+2\tilde\delta}$ and argue as we did above to get 
\begin{align*}
    \kappa_2
    &\lesssim\sup_Q\||W^{-r}\W^{\frac{q}{r},r}_Q|_{\op}^\frac{1}{r}(b-\langle b\rangle_Q)^k\|_{\avgL^{ru(\frac{p}{r})'}(Q)}\\ 
    &\leq\||W^{-r}\W^{\frac{q}{r},r}_Q|_\op\|_{\avgL^{\frac{u}{1-\delta}(\frac{p}{r})'}(Q)}^\frac{1}{r}\|b-\langle b\rangle_Q\|_{\avgL^\frac{ru(\frac{p}{r})'k}{\delta}(Q)}^k \\ 
    &\lesssim |\overline{\W}_Q^{(\frac{p}{r})',r}\,\W^{\frac{q}{r},r}_Q|_{\op}^\frac{1}{r}[W^{-r}]_{\A_{(\frac{q}{r})',(\frac{p}{r})'}}^{k(\frac{p}{r})'}\|b\|_{BMO}^k \\ 
    &\lesssim [W^r]_{\A_{\frac{p}{r},\frac{q}{r}}}^{k(\frac{p}{r})'+\frac{1}{r}}\|b\|_{BMO}^k;
\end{align*}
in the last estimate we have used Remark~\ref{remark:Apq-dual}.
If we compare the estimates for $\kappa_1$ and $\kappa_2$ and take the larger exponent, we get that
\[ \kappa \lesssim [W^r]_{\A_{\frac{p}{r},\frac{q}{r}}}^{k\max\{(\frac{p}{r})',\frac{q}{r}\}+\frac{1}{r}}\|b\|_{BMO}^k. \]

\medskip

We now  estimate $B(\Phi,\Psi)$.  From  our definition of  $\Phi$ and $\Psi$ as power bumps, we have that 
\[  \bar{\Phi}_r \in \cB_p \subset \cB_{p,q}  \quad \text{ and } \quad \bar{\Psi} \in \cB_{q'} \subset \cB_{q',p'}.  \]
Therefore, we can use either hypothesis on the Young functions in Theorem~\ref{thm:orliczbumps}, and so we have that
\[  B(\Phi,\Psi) = \min\{ B_{p,q}(\bar\Phi_r)B_{q'}(\bar\Psi), B_{p}(\bar\Phi_r)B_{q',p'}(\bar\Psi)\}.  \]
We estimate these quantities using~\eqref{eqn:Bpq-cond}.  Since $\bar \Phi_r(t) \approx t^{r(u(p/r)')'} = t^{\frac{upr}{up+r-p}}$, and  since $r- p$ we have $\frac{upr}{up+r-p}<q$.  Therefore,
\begin{multline*}
    B_{p,q}(\bar\Phi_r) 
    = \bigg(\int_1^\infty \frac{{\bar \Phi}_r(t)^{\frac{q}{p}}}{t^{q+1}}\,dt\bigg)^{\frac{1}{q}} 
    \approx \bigg(\int_1^\infty t^{\frac{uqr}{up+r-p}-q-1}\,dt\bigg)^{\frac{1}{q}} 
    = \bigg(\frac{-1}{\frac{uqr}{up+r-p}-q}\bigg)^{\frac{1}{q}} \\
    = \bigg(\frac{1}{q} \frac{u}{(u-1)}\frac{p}{(p-r)} \bigg)^{\frac{1}{q}}
    \lesssim (u')^{\frac{1}{q}}
    \lesssim [W^{-r}]_{\A_{(\frac{q}{r})',(\frac{p}{r})'}}^{(\frac{p}{r})'\frac{1}{q}}
    \approx [W^{r}]_{\A_{\frac{p}{r},\frac{q}{r}}}^{(\frac{p}{r})'\frac{1}{q}}.
\end{multline*}
The last equivalence follows from Remark~\ref{remark:Apq-dual}.  Similar computations show that
\begin{gather*}
    B_{q'}(\bar\Psi)\lesssim (v')^\frac{1}{q'}\lesssim [W^r]_{\A_{\frac{p}{r},\frac{q}{r}}}^\frac{q}{rq'}, \\
    B_{p}(\bar\Phi_r)\lesssim (u')^\frac{1}{p}\lesssim [W^r]_{\A_{\frac{p}{r},\frac{q}{r}}}^{(\frac{p}{r})'\frac{1}{p}}, \\
    B_{q',p'}(\bar\Psi)\lesssim (v')^\frac{1}{p'}\lesssim [W^r]_{\A_{\frac{p}{r},\frac{q}{r}}}^\frac{q}{rp'}.    
\end{gather*}
If we combine these estimates, we see that 
\[ B(\Phi,\Psi) \lesssim [W^{r}]_{\A_{\frac{p}{r},\frac{q}{r}}}^{\min\{(\frac{p}{r})'\frac{1}{q} + \frac{q}{rq'}, 
(\frac{p}{r})'\frac{1}{p} + \frac{q}{rp'}\} }.  
\]
This, together with our estimate for $\kappa$ above, gives the desired constant in~\eqref{eqn:MWCB1}.  This completes the proof.
\end{proof}

\section{Matrix-Weighted Sobolev Inequalities}
\label{section:sobolev}

In this section we prove matrix-weighted Gagliardo-Nirenberg-Sobolev (GNS) inequalities for vector-valued functions.  The classical GNS inequality for scalar functions is
$$\|f\|_{L^{p^*}(\R^n)}\leq C\|\nabla f\|_{L^p(\R^n)}, \qquad f\in \dot W^{1,p}(\R^n),$$
where $1\leq p<n$ and $p^*=\frac{np}{n-p}$.  (Here, $\dot W^{1,p}(\R^n)$ is the homogeneous Sobolev space of functions that satisfy $\|\grad f\|_{L^p(\R^n,\R^d)}<\infty$.) 
Muckenhoupt and Wheeden~\cite{muckenhoupt-wheeden74} proved a weighted GNS using the weighted inequality for  the fractional integral:  given  $0<\alpha<n$ and $1<p<\frac{n}{\alpha}$, if $\frac{1}{p}-\frac{1}{q}=\frac{\alpha}{n}$ and $w\in \A_{p,q}$, then 
$$\|wI_\alpha f\|_{L^q(\R^n)}\leq C\| wf\|_{L^p(\R^n)}.$$
Given this, it follows immediately from  the subrepresentation formula
\begin{equation}\label{subrep}
|f(x)|\leq CI_1(|\nabla f|)(x),\qquad f\in C_c^\infty(\R^n),
\end{equation}
that if $w\in \A_{p,p^*}$, then we have the weighted GNS inequality
$$\|f\|_{L^{p^*}(w)}\leq C\|I_1(|\nabla f|)\|_{L^{p^*}(w)}\leq C\|\nabla f\|_{L^p(w)}. $$

In the vector-valued setting this approach using the subrepresentation formula does not work since it is not clear how to bound $|W(x)f(x)|$ in terms of the fractional integral $I_1$.  In \cite{MR4030471} they used the  representation formula
\begin{equation}\label{represent} 
f(x)=\frac{1}{\omega_{n-1}}\int_{\R^n}\frac{x-y}{|x-y|^n}\cdot \nabla f(y)\,dy,\qquad f\in C_c^\infty(\R^n).
\end{equation}
(This formula is sometimes referred to as the Bogovskii integral; also see~\cite[Section~V.2]{MR0290095}.)
Define the vector-valued fractional integral $\mathbf I_1$ by
$$\mathbf I_1f(x)=\frac{1}{\omega_{n-1}}\int_{\R^n}\frac{x-y}{|x-y|^n}f(y)\,dy,$$
and denote the component operators by $\mathbf I_1=(I_1^1,\ldots,I_1^n)$.  With this, we see that~\eqref{represent} can be rewritten as
$$f=\mathbf I_1\cdot \nabla f=\sum_{j=1}^n I_1^j(\partial_j f).$$
Since
$$\frac{|x_j-y_j|}{|x-y|^n}\leq \frac{1}{|x-y|^{n-1}},$$
the operators $I^j_1$ are fractional singular integral operators as defined above in~\eqref{Eq:genFracInt}. In particular, the $I_1^j$ satisfy  sparse bounds and are  bounded from $L^p(W)$ to $L^{p^*}(W)$ if $W\in \A_{p,p^*}$:  see Corollary~\ref{Cor:MTalpha-bd}.

\begin{proof}[Proof of Theorem~\ref{thm:GNS-intro}]
By a standard density argument, it suffices to prove this for a vector-valued function  $\vecf\in C_c^\infty(\R^n,\R^d)$.   If we apply~\eqref{represent} on each component, we get
$$ \vecf(x)=\sum_{j=1}^n {I}_1^j(\partial_j\vecf)(x).$$
Therefore, given a matrix weight $W\in \A_{p,p^*}$, we have 
$$W(x)\vecf(x)=\sum_{j=1}^nW(x){I}_1^j(\partial_j\vecf)(x),$$
and so by the boundedness of $I_1^j$ on matrix-weighted spaces,
\begin{align*}
    \|W\vecf\|_{L^{p^*}(\R^n,\R^d)}
    & \leq \sum_{j=1}^n\|W{I}_1^j(\partial_j\vecf)\|_{L^{p^*}(\R^n,\R^d)}\\
    &  \leq [W]_{\A_{p,p^*}}^{\min\{\frac{p'}{n'}+p^*,\frac{p^*}{n'}+p'\}-1}\sum_{j=1}^n\|W\partial_j\vecf\|_{L^p(\R^n,\R^d)} \\
    &  \leq [W]_{\A_{p,p^*}}^{\min\{\frac{p'}{n'}+p^*,\frac{p^*}{n'}+p'\}-1}\|WD\vecf\|_{L^p(\R^n,\R^{d\times n})},
\end{align*}
where $D\vecf= [\partial_j f_i]$ is the derivative matrix of $\vecf$.
\end{proof}

\medskip

We can immediately generalize Theorem~\ref{thm:GNS-intro}.  Given  $0<\alpha<1$, define the Riesz fractional derivative or fractional gradient (see \cite[Section~15.2]{MR3675703}) by
$$\nabla^\alpha f(x)=-\operatorname{p.v.}\int_{\R^n}\frac{f(y)}{|x-y|^{n +\alpha}}\frac{x-y}{|x-y|}\,dy=\int_{\R^n}\frac{f(x)-f(y)}{|x-y|^{n +\alpha}}\frac{x-y}{|x-y|}\,dy;$$
for the second equality, we have used the fact that 
$$\operatorname{p.v.}\int_{\mathbb R^{n}}\frac{x-y}{|x-y|^{n+\alpha+1}}\,dy=0$$
to add a term in the principal value integral.  We make this change because  the final integral converges absolutely if $f\in C_c^\infty(\R^n)$ and so we get
$$|\nabla^\alpha f(x)|\leq \int_{\R^n}\frac{|f(x)-f(y)|}{|x-y|^{n +\alpha}}\,dy.$$
The term on the right-hand side is the non-linear fractional derivative used to define the Gagliardo seminorm:
$$[f]_{W^{\alpha,1}(\R^n)}=\int_{\R^n}\int_{\R^n}\frac{|f(x)-f(y)|}{|x-y|^{n +\alpha}}\,dydx;$$
hence, we have that
$$\|\nabla^\alpha f\|_{L^1(\R^n)}\leq [f]_{W^{\alpha,1}(\R^n)}.$$

Denote the components of $\nabla^\alpha$ by 
$\nabla^\alpha =(\partial_1^\alpha,\ldots,\partial_n^\alpha )$. 
If we compute the derivatives of the kernel of the operator 
$I_{1-\alpha}$, we get
$$\partial_j |x|^{1-\alpha-n}=(1-\alpha-n)\frac{x_j}{|x|^{n+\alpha+1}}.$$
Therefore, we have that
$$\nabla^\alpha f(x)=c\nabla I_{1-\alpha}f(x).$$

With the fractional gradient we can generalize equation~\eqref{represent} to get the following representation formula.   (For a proof, see~\cite[Proposition 15.8]{MR3675703}.)

\begin{prop} 
Suppose $d\geq 2$ and $0<\alpha<1$.  If $f\in C_c^\infty(\R^n)$, then there exists a constant $c=c(\alpha,n)$ such that for every $x\in \R^n$,
\begin{equation} \label{eqn:frac-rep}
f(x)=c\int_{\R^n}\frac{x-y}{|x-y|^{n-\alpha+1}}\cdot\nabla^\alpha f(y)\,dy.
\end{equation}
\end{prop}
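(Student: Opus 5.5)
We will obtain \eqref{eqn:frac-rep} by applying the classical representation formula \eqref{represent} to $g = I_{1-\alpha} f$ and then recombining the two fractional integrals through the semigroup property of Riesz potentials. Since $\nabla^\alpha f = c\,\nabla I_{1-\alpha} f$, the right-hand side of \eqref{eqn:frac-rep} is, up to a constant, $\mathbf{I}_{\alpha}'\cdot \nabla I_{1-\alpha}f$. Here $\mathbf I_\alpha'$ denotes the vector operator with kernel $\frac{x-y}{|x-y|^{n-\alpha+1}}$. This kernel is $c\,\nabla_x |x-y|^{\alpha+1-n}$, provided $\alpha+1-n\neq 0$; the case $n=1$ is excluded, and we assume this is what the hypothesis $d\geq2$ intends, read as $n\geq 2$.

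The first step is to write the kernel identity $\frac{x-y}{|x-y|^{n-\alpha+1}} = \frac{1}{\alpha+1-n}\nabla_x|x-y|^{\alpha+1-n}$. Then, for nice $h$, we have $\mathbf I'_\alpha\cdot \nabla h = c\, \nabla\cdot I_{1+\alpha}(\nabla h)\cdot$. Moving the derivatives (integration by parts, or equivalently Fourier multipliers), this equals $c\,\Delta I_{\alpha+1} h$ with $h=I_{1-\alpha}f$. On the Fourier side this is immediate. The multiplier of $I_\beta$ is $c_\beta|\xi|^{-\beta}$, that of $\nabla^\alpha$ is $c\, i\xi|\xi|^{\alpha-1}$, and that of the operator with kernel $\frac{x-y}{|x-y|^{n-\alpha+1}}$ is $c\, i\xi|\xi|^{-\alpha-1}$ (the gradient of $|\xi|^{-(1+\alpha)}$ up to constants, valid since $0<1+\alpha<n$). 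Their dot product yields the multiplier $c\,|\xi|^2|\xi|^{-2}=c$, a nonzero constant. This gives $f = c\,\mathbf I'_\alpha\cdot\nabla^\alpha f$.

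To make this rigorous for $f\in C_c^\infty$, we would proceed pointwise. The function $\nabla^\alpha f$ is smooth and decays like $|y|^{-n-\alpha}$ at infinity, because $f$ has compact support and $\operatorname{p.v.}$ cancellation gives the size bound. Hence the integral in \eqref{eqn:frac-rep} converges absolutely: the kernel is $O(|x-y|^{\alpha+1-n})$ locally and the product decays like $|y|^{-2n+?}$, which is integrable. Both sides are therefore continuous tempered distributions. The multiplier identity then holds in $\mathcal S'$, since the symbols are locally integrable: $|\xi|^{-\alpha}$ and $|\xi|^{\alpha-1}|\xi|$ are locally integrable, and their product is handled directly because $\widehat f$ is Schwartz. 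So the two sides agree a.e., and by continuity they agree everywhere.

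The main obstacle is the justification of the Fourier computation. Specifically, we must show that the convolution of the tempered but non-Schwartz function $\nabla^\alpha f$ with the homogeneous kernel has Fourier transform equal to the product of the transforms. We would handle this by testing against Schwartz functions $\phi$, using Fubini, which is licensed by the absolute convergence above, together with Parseval. The identity then reduces to $\int \widehat f\,\overline{\widehat\phi}\,(\text{product of symbols})\,d\xi$, where the products are locally integrable near $\xi=0$. The value of $c(\alpha,n)$ is obtained from the known constants in the Fourier transforms of $|x|^{-\beta}$.
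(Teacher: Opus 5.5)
The paper gives no proof of this proposition; it only refers to Proposition~15.8 of the cited monograph. Your Fourier-multiplier argument is therefore a self-contained justification rather than a rival to an internal proof, and it is correct for $n\ge 2$. Reading $d\ge2$ as $n\ge2$ is right, since $d$ plays no role for scalar $f$.

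The essential points check out:
\begin{itemize}
\item Splitting $K(z)=z|z|^{\alpha-1-n}$ at $|z|=1$ gives an $L^1$ piece plus an $L^2$ piece.
\item $\nabla^\alpha f=c\,I_{1-\alpha}(\nabla f)$ is smooth, lies in $L^1\cap L^\infty$, and has transform $c\,i\xi|\xi|^{\alpha-1}\widehat f$.
\item Hence $\widehat{K*\nabla^\alpha f}=c\,\widehat f$ with $c\neq0$, and continuity of both sides upgrades a.e.\ equality to equality everywhere.
\end{itemize}

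Three slips should be fixed.
\begin{itemize}
\item The kernel has size $|x-y|^{\alpha-n}$, not $|x-y|^{\alpha+1-n}$. So the integrand is $O(|y|^{-2n})$ at infinity; this is the exponent missing from your ``$|y|^{-2n+?}$''.
\item The multiplier of $K$ is $i\xi$ times the transform $c|\xi|^{-1-\alpha}$ of $|z|^{\alpha+1-n}$. It is not a gradient in $\xi$ of $|\xi|^{-1-\alpha}$, although the formula $c\,i\xi|\xi|^{-1-\alpha}$ you wrote is correct.
\item The heuristic passage through $\Delta I_{1+\alpha}I_{1-\alpha}f=c\,\Delta I_2 f$ only makes sense for $n\ge3$. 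When $n=2$ there is no $I_2$, and $I_{1+\alpha}(I_{1-\alpha}f)$ diverges logarithmically unless $\int f=0$. Your rigorous step never uses this passage: it pairs $\widehat K$ directly with $\widehat{\nabla^\alpha f}$, whose factor $|\xi|^{\alpha}$ absorbs the singularity $|\xi|^{-\alpha}$.
\end{itemize}

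Your opening plan is also misstated: applying \eqref{represent} to $I_{1-\alpha}f$ represents $I_{1-\alpha}f$, not $f$. You can realize that idea pointwise, for all $n\ge2$ and without Parseval, by applying \eqref{represent} to $f$ itself together with the kernel identity
\[ \int_{\mathbb R^n}\frac{x-y}{|x-y|^{n-\alpha+1}}\,\frac{dy}{|y-z|^{n-1+\alpha}}=c\,\frac{x-z}{|x-z|^{n}}, \qquad c\neq0. \]
The left side converges absolutely, is homogeneous of degree $1-n$, and is equivariant under orthogonal transformations; $c\neq0$ follows from your multiplier computation. Since $\nabla^\alpha f=c\,I_{1-\alpha}(\nabla f)$, and $\int|x-y|^{\alpha-n}|y-z|^{1-\alpha-n}\,dy=c\,|x-z|^{1-n}$ justifies Fubini, the right-hand side of \eqref{eqn:frac-rep} becomes $c\int\frac{x-z}{|x-z|^n}\cdot\nabla f(z)\,dz=c\,f(x)$.

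Your Fourier route needs a little distribution theory, but it makes $c(\alpha,n)$ explicit from the known transforms of $|x|^{-\beta}$. The paper's citation buys brevity and leaves all of this to the reference.
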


We now proceed as we did above.  For $j=1,\ldots,n$, define the operator  $I_\alpha^j$ by
$${I}_\alpha^jg(y)=\int_{\R^n}\frac{x_j-y_j}{|x-y|^{n-\alpha+1}}g(y)\,dy.$$
For each $j$ the kernel of $I_\alpha^j$ satisfies
$$K^j_\alpha(x,y)=\frac{x_j-y_j}{|x-y|^{n-\alpha+1}},$$
Thus, each $I_\alpha^j$ is a fractional singular integral as defined by~\eqref{Eq:genFracInt}.  Thus, for $W\in \A_{p,q}$, where $\frac{1}{p}-\frac{1}{q}=\frac{\alpha}{n}$, we have that it is bounded, $L^p(W)$ to $L^q(W)$.  Moreover, if we apply~\eqref{eqn:frac-rep} to each component, we get
$$\vecf(x)=\sum_{j=1}^n I_\alpha^j(\partial^\alpha_j\vecf)(x).$$
Finally, given a vector-valued function $\vecf=(f_1,\ldots,f_d)$,  define the total fractional derivative
$${D}^\alpha \vecf(x)=[\partial_1^\alpha \vecf \ \cdots \ \partial^\alpha_d\vecf]=\left[\begin{array}{c}\nabla^\alpha f_1 \\ \vdots \\ \nabla^\alpha f_d\end{array}\right]=\Big[\partial^\alpha_jf_i\Big].$$
Therefore, if we repeat the argument used above for the proof of Theorem~\ref{thm:GNS-intro}, we get the following result.

\begin{theorem}\label{Thm:MatrixWeightedFractionalGradientBound}
Given $0<\alpha<1$ and $1<p<\frac{n}{\alpha}$,  define $q$ by $\frac1q=\frac1p-\frac{\alpha}{n}$.  If $W\in \A_{p,q}$, then for all $\vecf$ such that $D^\alpha \vecf \in L^p(W)$, 
$$\|W\vecf\|_{L^q(\R^n,\R^d)}
\leq c[W]_{\mathcal \A_{p,q}}^{\min\{(1-\frac{\alpha}{n})p'+q,p'+(1-\frac\alpha{n})q\}-1}
\|W{D}^\alpha \vecf\|_{L^p(\R^n,\R^{d\times n})}.$$
\end{theorem}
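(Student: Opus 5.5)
The plan is to mirror the proof of Theorem~\ref{thm:GNS-intro}, replacing the classical representation formula \eqref{represent} with the fractional representation \eqref{eqn:frac-rep}. By a density argument I will first reduce to $\vecf\in C_c^\infty(\R^n,\R^d)$; then $\nabla^\alpha f_i$ is well defined and the integrals in \eqref{eqn:frac-rep} converge for each component $f_i$. Applying \eqref{eqn:frac-rep} to each component gives the vector identity
\[
\vecf(x)=c\sum_{j=1}^n I_\alpha^j(\partial_j^\alpha\vecf)(x),
\]
where $\partial_j^\alpha\vecf$ is the $j$-th column of $D^\alpha\vecf$.

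Next I will verify that each $I_\alpha^j$ is a fractional singular integral in the sense of \eqref{Eq:genFracInt}. The kernel satisfies $|K_\alpha^j(x,y)|\le |x-y|^{\alpha-n}$, so $|I_\alpha^j g|\le I_\alpha(|g|)$ pointwise. This yields the required $L^p\to L^q$ boundedness and the weak-type bounds for both $I_\alpha^j$ and its Lerner maximal operator. Corollary~\ref{Cor:MTalpha-bd} then gives convex body domination with $r=1$, and Theorem~\ref{thm:MatWeightCommutatorBound} with $k=0$ and $r=1$ (equivalently, the argument giving Theorem~\ref{thm:I-alpha-bound-intro}) yields
\[
\|WI_\alpha^j\vecg\|_{L^q(\R^n,\R^d)}\lesssim [W]_{\A_{p,q}}^{\min\{(1-\frac{\alpha}{n})p'+q,\,p'+(1-\frac{\alpha}{n})q\}-1}\|W\vecg\|_{L^p(\R^n,\R^d)}.
\]

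Finally, I will multiply the identity by $W(x)$, apply the triangle inequality in $L^q$, and use the bound above for each $j$. This reduces matters to $\sum_j\|W\partial_j^\alpha\vecf\|_{L^p}$, which is at most a constant (depending on $n$) times $\|WD^\alpha\vecf\|_{L^p(\R^n,\R^{d\times n})}$, since each column of a matrix is dominated by the whole matrix.

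The main obstacle I expect is justifying the density step and the applicability of the operator bounds to $\partial_j^\alpha\vecf$. The function $\nabla^\alpha f_i$ is not compactly supported, while the domination in Corollary~\ref{Cor:MTalpha-bd} is stated for $L^\infty_c$. I would handle this by noting that $\nabla^\alpha f_i$ is smooth, bounded and decays like $|x|^{-n-\alpha}$, hence lies in $L^p(W)$. I would then approximate by truncations $\mathbbm 1_{B(0,R)}\partial_j^\alpha\vecf$, pass to the limit using the uniform weighted bound and Fatou's lemma, and extend to general $\vecf$ with $D^\alpha\vecf\in L^p(W)$ by the standard density argument.
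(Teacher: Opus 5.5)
Your proposal is correct and follows essentially the same route as the paper: apply the fractional representation formula componentwise, note that each $I_\alpha^j$ is pointwise dominated by $I_\alpha$ so that Corollary~\ref{Cor:MTalpha-bd} and the $k=0$, $r=1$ case of Theorem~\ref{thm:MatWeightCommutatorBound} apply, and finish with the triangle inequality and by bounding each column $W\partial_j^\alpha\vecf$ by the full matrix $WD^\alpha\vecf$. Your truncation-and-Fatou step for the non-compactly supported $\partial_j^\alpha\vecf$ is a reasonable extra justification that the paper leaves implicit in its appeal to a ``standard density argument.''
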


\bibliographystyle{plain}
\bibliography{convex-sparse-frac}

\end{document}